\documentclass[10pt,a4paper,oneside]{amsproc}

\usepackage{latexsym, amsthm}
\usepackage{amsfonts, amsmath, amssymb,comment}
\usepackage{hyperref}
\usepackage{euscript,mathrsfs}
\usepackage{url}
\usepackage{array}
\usepackage[a4paper]{geometry}
\usepackage{graphics}
\usepackage[usenames]{color}
\usepackage[all]{xy}
\usepackage{graphicx}
\usepackage{boxedminipage}
\usepackage{enumerate}

\newtheorem{theorem}{Theorem}[section]

\newtheorem{conjecture}{Conjecture}
\newtheorem{corollary}[theorem]{Corollary}

\newtheorem{definition}[theorem]{Definition}
\newtheorem{remark}[theorem]{Remark}

\newtheorem{lemma}[theorem]{Lemma}

\newtheorem{proposition}[theorem]{Proposition}

\newcommand{\codim}{\mathrm{codim}}
\newcommand{\ev}{\mathrm{ev}}
\newcommand{\wt}{\mathrm{wt}}
\newcommand{\B}{\mathcal{B}}
\newcommand{\s}{\mathcal{S}}
\newcommand{\Fqt}{\mathbb{F}_{q^2}}
\newcommand{\Fq}{\mathbb{F}_q}

\newcommand{\C}{\mathcal{C}}
\newcommand{\I}{\mathcal{I}}
\newcommand{\U}{\mathcal{U}}
\newcommand{\T}{\mathcal{T}}

\def\Fq{{\mathbb F}_q}
\def\AA{{\mathbb A}}
\def\FF{{\mathbb F}}
\def\PP{{\mathbb P}}

\newcommand{\X}{\mathcal{X}}
\newcommand{\p}{\mathsf{P}}

\begin{document}

\title[]{Functional codes arising from rank $n$ Hermitian varieties and  hypersurfaces in low dimensions}

\author{Subrata Manna}
\address{Department of Mathematics  \newline \indent
Indian Institute of Technology Bombay, Mumbai, India}
%\curraddr{}
\email{subrata147314@gmail.com}

\keywords{Functional codes, Hermitian varieties, Hypersurfaces, Rational points}
\subjclass[2020]{Primary 14G50, 14G05, 05B25}

\begin{abstract}   
We study the functional code $C_d(\X)$, introduced by G. Lachaud in 1996, in the case where $\X$ is a rank $n$ degenerate Hermitian variety $P\U_{n-1}$ in $\PP^n(\FF_{q^2})$ and $d\leq q$. We establish an upper bound for the maximum number of $\FF_{q^2}$-rational points in the intersection of $P\U_{n-1}$ with an $\Fqt$-hypersurface of degree at most $q$ in $\PP^n$. Using this bound, we determine the parameters of the codes $C_d(P\U_{n-1})$ in the cases $n=2,3,4$. We also characterize the hypersurfaces that correspond to the minimum distance of these codes in the cases $n=2,3,4$.
\end{abstract}

\date{}
\maketitle

%%%%%%%%%%%%%%%%%%%%%%%%%%%%%%%%%%%%%%%%%%%%%%%%%%%%%%%%%%%%%%%%%%%%%%%%%%%%%%%%%%%%%%%%%%%%%%%%%%%%%%%%%%%%%%%%%%%%%%%%%%%%%%%%%%%%%%%%%%%%%%%%%%%%%%%%%
\section{Introduction}\label{intro}
In recent years, functional codes associated with algebraic varieties over finite fields have attracted significant attention from algebraic geometers, coding theorists and combinatorists. This type of code was first introduced by G. Lachaud in \cite{L}. Since their introduction in $1996$, functional codes have been extensively studied in numerous articles; see, for instance, \cite{BD, BDH, E, E1, E2, EHRS, EHRS1}. For a survey of functional codes associated with several classes of varieties, we refer to \cite{L08}. Among the many classes of varieties studied in this context, Hermitian varieties, one of the celebrated objects in finite geometry, have received particular attention. 

We fix an integer $q$, a power of a prime integer. As customary, $\Fq$ and $\Fqt$ denote the finite fields with $q$ and $q^2$ elements, respectively. For $n \ge 0$, we denote by $\PP^n$ (resp. $\AA^n$) the projective space (resp. affine space) of dimension $n$ over the algebraic closure $\overline{\FF}_q$, while $\PP^n (\Fqt)$ (resp. $\AA^n(\Fqt)$) will denote the set of all $\Fqt$-rational points on $\PP^n$ (resp. $\AA^n$). By an algebraic variety, we mean a set of zeroes of a certain collection of polynomials in the affine space or projective space, depending on the context. In particular, an algebraic variety need not be irreducible in Zariski topology.

Let $\X$ be a projective algebraic variety over the finite field $\Fq$. The functional codes $C_d(\X)$ are defined by evaluating homogeneous polynomials of degree $d$ over rational points of $\X$. The parameters and the weight distribution of functional codes for $d = 2$, where $\mathcal{X}$ is a non-degenerate Hermitian variety in $\mathbb{P}^n$, have been studied by many authors; see, for example, \cite{BBFS, E, E1, E2, E3, ELX, HS}. The case when $d=3$ and $\X$ is a non-degenerate Hermitian variety has been studied in \cite{DM, S1}. Edoukou, Hallez, Rodier and Storme studied functional codes in the case when $d=2$ and $\X$ is a nonsingular quadric in \cite{EHRS1}. The functional codes $C_{Herm}(\X)$ over $\Fq$ are defined analogously by evaluating the Hermitian forms of degree $q+1$ over the rational points of $\X$. When $\X$ is a non-degenerate Hermitian variety $\U_n$ in $\PP^n(\Fqt)$, the code $C_{Herm}(\U_n)$ was studied in \cite{EHRS}, while in the case where $\X$ is a non-singular quadric quadric $\mathcal{Q}$, the code $C_{Herm}(\mathcal{Q})$ was studied in \cite{BBFS}.

The aim of this paper is to investigate the structure of the functional codes 
$C_d(P\U_{n-1})$, where $P\U_{n-1}$ is a rank $n$ Hermitian variety in $\PP^n$. 
We determine the length, dimension, and a lower bound for the minimum distance 
of $C_d(P\U_{n-1})$. In addition, for the cases $n=2, 3, 4$, we determine 
the code parameters and describe the hypersurfaces that give rise to 
minimum-weight codewords.

The rest of the paper is organized as follows. In Section \ref{prel}, we recall several well-known properties of Hermitian varieties, review some preliminary results from algebraic geometry, present bounds on the number of rational points contained in a variety, and briefly introduce the notions of linear codes. In Section \ref{degenerate of rank n}, we study the maximum size of the intersection of degenerate Hermitian varieties of rank $n$ with hypersurfaces and characterize the hypersurfaces attaining this maximum in the case when $n=2, 3,4$. Finally, in Section \ref{parameters}, we determine the parameters of the functional codes associated with rank $n$ Hermitian varieties and hypersurfaces.

\section{Preliminaries}\label{prel}

Throughout this paper, for a homogeneous polynomial $F \in \Fqt[x_0, \dots , x_n]$, we denote by $V(F)$, the set of zeroes of $F$ in $\PP^n$ and by $V(F)(\Fqt)$ the set of all $\Fqt$-rational points of $V(F)$. When we say that a hypersurface $V(F)$ is irreducible, we will mean that the polynomial $F$ is irreducible in the field of its definition. Furthermore, by an $\Fqt$-variety $\X$, we mean that the defining polynomials of $\X$ have coefficients in $\Fqt$.
All the results presented in this section are already well established, and their proofs can be found in the cited references.

%%%%%%%%%%%%%%%%%%%%%%%%%%%%%%%%%%%%%%%%%%%%%%

\subsection{Hermitian varieties over a finite field}\label{Geometry of Hermitian}
First, we revisit the definition and key properties of Hermitian varieties (cf. \cite{BC, C}), which will be essential in the later sections of this paper.

\begin{definition}[\cite{BC}] \normalfont
Let $H = (h_{ij})$ ($0 \le i, j \le n$) be an $(n+1) \times (n+1)$ matrix with entries from $\Fqt$. We denote by $H^{(q)}$ the matrix whose $(i, j)$-th entry is given by $h_{ij}^q$. The matrix $H$ is said to be a \textit{Hermitian matrix} if $H \neq 0$ and $H^T = H^{(q)}$, i.e., $h_{ij}=h_{ji}^q$ for all $i,j$. A \textit{Hermitian variety} of dimension $n-1$ is the zero set of the polynomial $x^T H x^{(q)}$ inside $\PP^n$, where $H$ is an $(n+1) \times (n+1)$ Hermitian matrix and $x = (x_0, \dots, x_n)^T$. The Hermitian variety is classified as \textit{non-degenerate} if $\mathrm{rank} \ H = n+1$ and \textit{degenerate} otherwise.
\end{definition}

Bose and Chakravarti in \cite[Corollary of Theorem 4.1]{BC} proved that if the rank of a Hermitian matrix is $r$, then after a suitable projective change of coordinates over $\Fqt$, we can recognize the corresponding Hermitian variety as the zero set of the polynomial equation
\begin{equation}\label{reduceherm}
x_0^{q+1} + x_1^{q+1} + \dots + x_{r-1}^{q+1} = 0.
\end{equation}
It is worth noting that the polynomial $x_0^{q+1} + x_1^{q+1} + \dots + x_{r-1}^{q+1}$ is irreducible over $\overline{\FF}_q$ if and only if $r\geq 3$. Thus, it follows that a Hermitian variety of rank at least three is always absolutely irreducible. 

From now onwards, a Hermitian variety in $\PP^n$ of rank $r$ will be denoted by $\U_{r,n}$. But whenever the variety is non-degenerate, that is of rank $n+1$, we will simply denote it by $\U_n$. It follows from \eqref{reduceherm} that when $r=n$, the degenerate Hermitian variety $\U_{n,n}$ is defined by the polynomial 
\begin{equation}\label{nreduce}
 x_0^{q+1}+x_1^{q+1}+\dots+x_{n-1}^{q+1}.   
\end{equation}
From geometric point of view, the zeros of \eqref{nreduce} represents a cone with vertex at the point $[0:\dots:0:1]\in \PP^n$ and base the non-degenerate Hermitian variety $\U_{n-1}=V\left(x_0^{q+1}+\dots+x_{n-1}^{q+1}\right)$ contained in the hyperplane $V(x_n)\cong \PP^{n-1}$. Consequently, a degenerate Hermitian variety $\U_{n,n}$ of rank $n$ is a cone over a non-degenerate Hermitian variety $\U_{n-1}$ with vertex at an $\Fqt$-point $P$ and hence we denote it by $P\U_{n-1}$.

The number of $\Fqt$-rational points of a non-degenerate Hermitian variety $\U_n$ is also well known and well established in the literature.
\begin{theorem}\cite[Theorem 8.1]{BC}\label{nondeg point}\ Let $\U_n$ be a non-degenerate Hermitian variety in $\PP^n$. Then $$|\U_n(\Fqt)|=\frac{\big(q^n-(-1)^n\big)\left(q^{n+1}-(-1)^{n+1}\right)}{q^2-1}.$$
\end{theorem}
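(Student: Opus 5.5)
We count $\Fqt$-points on the diagonal model $x_0^{q+1}+\dots+x_n^{q+1}=0$; this is legitimate because, by the Bose--Chakravarti reduction \eqref{reduceherm}, every non-degenerate Hermitian variety in $\PP^n$ is projectively equivalent over $\Fqt$ to it. The plan is to compute the number $N_{n+1}$ of affine solutions $(x_0,\dots,x_n)\in\Fqt^{\,n+1}$ of $\sum x_i^{q+1}=0$. We then get $|\U_n(\Fqt)|=(N_{n+1}-1)/(q^2-1)$, since the nonzero solutions fall into classes of size $q^2-1$ under scalar multiplication.

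The key input is the structure of the norm map $N:\Fqt^*\to\Fq^*$, $x\mapsto x^{q+1}$. It is a surjective group homomorphism, so every $c\in\Fq^*$ has exactly $q+1$ preimages, and $x^{q+1}=0$ only for $x=0$. For $m\ge1$ and $c\in\Fq$, let $A_m(c)$ be the number of $(x_1,\dots,x_m)\in\Fqt^{\,m}$ with $\sum x_i^{q+1}=c$. Observe that $A_m(c)$ is the same for all $c\neq0$: if $c'=\lambda^{q+1}c$, then scaling by $\lambda$ is a bijection between the two solution sets. Since the total count is $q^{2m}=A_m(0)+(q-1)A_m(1)$, it suffices to find $A_m(0)$.

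Splitting off the last variable gives the recursion
\[A_m(0)=A_{m-1}(0)+(q^2-1)\,A_{m-1}(1),\]
because $x_m=0$ contributes $A_{m-1}(0)$, and each of the $q^2-1$ nonzero $x_m$ forces $\sum_{i<m}x_i^{q+1}=-x_m^{q+1}\neq0$. Substituting $A_{m-1}(1)=(q^{2m-2}-A_{m-1}(0))/(q-1)$, this becomes
\[A_m(0)=q^{2m-2}(q+1)-q\,A_{m-1}(0),\qquad A_1(0)=1.\]
The closed form to verify by induction is
\[A_m(0)=q^{2m-1}+(-1)^m(q^m-q^{m-1}).\]
One checks $m=1$ directly: $q+(-1)(q-1)=1$. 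For the inductive step, one confirms
\[q^{2m-1}+q^{2m-2}-q\bigl(q^{2m-3}+(-1)^{m-1}(q^{m-1}-q^{m-2})\bigr)=q^{2m-1}+(-1)^m(q^m-q^{m-1}).\]

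Finally, set $m=n+1$ and simplify
\[\frac{q^{2n+1}+(-1)^{n+1}q^{n}(q-1)-1}{q^2-1}.\]
To show this equals the stated formula, expand
\[\bigl(q^n-(-1)^n\bigr)\bigl(q^{n+1}-(-1)^{n+1}\bigr)=q^{2n+1}+(-1)^n q^n-(-1)^n q^{n+1}-1,\]
which matches the numerator term by term. The only real content is the norm-map fibre count and the scaling argument showing $A_m(c)$ is independent of $c\neq0$. The main obstacle is purely bookkeeping with the signs $(-1)^m$ in the recursion; I would guard against errors there by checking small cases, e.g. $n=1$ gives $q+1$ points and $n=2$ gives $q^3+1$ points, as expected for the Hermitian curve.
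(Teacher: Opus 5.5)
Your proposal is correct and complete. The reduction to the diagonal form, the fibre count of the norm $x\mapsto x^{q+1}$, and the scaling argument showing $A_m(c)$ is constant on $\Fq^*$ all hold. Together with the implicit fact that $\sum x_i^{q+1}$ always lies in $\Fq$, these give $q^{2m}=A_m(0)+(q-1)A_m(1)$, and the recursion, its closed form and the final simplification all check out.

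The paper gives no proof of this statement; it simply quotes Bose--Chakravarti, so your computation is a self-contained substitute rather than a variant of an argument in the text.

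For comparison, the classical geometric route uses only facts the paper already records. Fix an $\Fqt$-point $R\notin\U_n$. Every line through $R$ meets the polar hyperplane $R^{\perp}$ in exactly one point $Q$. The line is tangent or secant to $\U_n$ (Lemma~\ref{line}) according as $Q\in\U_n$ or not. Moreover $R^{\perp}$ is non-tangent, because its pole $R$ is off $\U_n$, so $R^{\perp}\cap\U_n$ is a non-degenerate $\U_{n-1}$ by Theorem~\ref{hyperplane section}(b). Counting points on the lines through $R$ gives
\[
|\U_n(\Fqt)|=(q+1)\,|\PP^{n-1}(\Fqt)|-q\,|\U_{n-1}(\Fqt)|.
\]
This is precisely the projectivization of your recursion $A_m(0)=(q+1)q^{2m-2}-qA_{m-1}(0)$ under the substitution $A_m(0)=1+(q^2-1)|\U_{m-1}(\Fqt)|$.

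Your version needs nothing beyond the diagonal form and surjectivity of the norm, with no polarity theory. As a by-product it also yields the affine counts $A_m(c)$ for $c\neq 0$. The geometric version is coordinate-free and explains the same recursion in terms of tangent and secant lines through a point off the variety.
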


\noindent In particular, the number of $\Fqt$-rational points in $P\U_{n-1}$ is $1+q^2|\U_{n-1}(\Fqt)|$. The hyperplane sections of a non-degenerate Hermitian variety are well-studied, thanks to \cite{BC, C}.

\begin{theorem}\label{hyperplane section}
     Let $\U_n$ be a non-degenerate Hermitian variety in $\PP^n$ and $\Sigma$ be a hyperplane of $\PP^{n}$ defined over $\Fqt$. 
     \begin{enumerate}
     \item[(a)] \cite[Theorem 7.4]{BC} if $\Sigma$ is tangent to $\U_n$ at a point $R\in \U_n(\Fqt)$, then $\Sigma \cap \U_n$ is a degenerate Hermitian variety of rank $n-1$ contained in $\Sigma$. That is, 
     $\Sigma \cap \U_n$ is a cone over a non-degenerate Hermitian variety $\U_{n-2}$ contained in a hyperplane of $\Sigma$ with center at $R$. 
     \item[(b)] \cite[Theorem 3.1]{C} if $\Sigma$ is not a tangent to $\U_n$, then $\Sigma \cap \U_n$ is a non-degenerate Hermitian variety $\U_{n-1}$ in $\Sigma$.
     \end{enumerate}
     
 \end{theorem}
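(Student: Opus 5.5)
The plan is to work with the sesquilinear form attached to $H$ rather than with the polynomial directly. Let $h(x,y)=x^T H y^{(q)}$ on $V=\Fqt^{\,n+1}$. Then $h$ is additive in each variable, linear in $x$, and $q$-semilinear in $y$, with $h(y,x)=h(x,y)^q$. The variety $\U_n$ is $\{[x] : h(x,x)=0\}$. Since $\mathrm{rank}\,H=n+1$, the map $y\mapsto h(\cdot,y)$ is a semilinear bijection from $V$ onto its dual. Hence every $\Fqt$-hyperplane $\Sigma=\PP(W)$, with $\dim W=n$, is the polar $W=p^{\perp}=\{x: h(x,p)=0\}$ of a unique point $[p]\in\PP^n(\Fqt)$. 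Moreover $W^{\perp}=\langle p\rangle$, which has dimension $1$.

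The first step is to identify tangency with this polarity. Differentiate $F(x)=x^T H x^{(q)}=\sum_{i,j}h_{ij}x_ix_j^q$. The terms coming from $x_j^q$ have derivative divisible by $q$, so they vanish, and $\partial F/\partial x_i=\sum_j h_{ij}x_j^q$. Therefore the tangent hyperplane to $\U_n$ at $R\in\U_n(\Fqt)$ is $V\bigl(\sum_i x_i(HR^{(q)})_i\bigr)=\{x: h(x,R)=0\}$, which is $R^{\perp}$. Because $R$ is not a zero of the gradient, this is a genuine hyperplane. Consequently $\Sigma=p^{\perp}$ is tangent to $\U_n$ if and only if $h(p,p)=0$, that is $p\in\U_n$, and in that case the point of tangency is $R=[p]$.

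Next, restrict $h$ to $W$. Choose an $\Fqt$-basis of $W$, i.e. projective coordinates on $\Sigma\cong\PP^{n-1}$. In these coordinates the restriction is given by an $n\times n$ Hermitian matrix $H_W$, and $\Sigma\cap\U_n$ is the Hermitian variety it defines. The radical of $h|_W$ is $W\cap W^{\perp}=W\cap\langle p\rangle$. So $\mathrm{rank}\,H_W=n-\dim(W\cap\langle p\rangle)$, and this radical is nonzero exactly when $p\in W$, i.e. $h(p,p)=0$.

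This gives both cases. In case (b), $\Sigma$ is not tangent, so $p\notin W$, $\mathrm{rank}\,H_W=n$, and $\Sigma\cap\U_n$ is a non-degenerate $\U_{n-1}$ in $\Sigma$. In case (a), $p=R\in W$, $\mathrm{rank}\,H_W=n-1$, and the radical is exactly $\langle R\rangle$. Applying the Bose--Chakravarti normal form \eqref{reduceherm} inside $\Sigma$, we can choose coordinates $y_0,\dots,y_{n-1}$ on $\Sigma$ whose radical direction is the last coordinate point. Then $\Sigma\cap\U_n=V(y_0^{q+1}+\dots+y_{n-2}^{q+1})$, which, exactly as explained after \eqref{nreduce}, is a cone with vertex $R$ over a non-degenerate $\U_{n-2}$ lying in a hyperplane of $\Sigma$.

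I expect the main point needing care to be the rank--radical relation for a Hermitian rather than bilinear form, namely that $\mathrm{rank}\,H_W=\dim W-\dim\mathrm{rad}(h|_W)$ and $\dim W^{\perp}=1$. The semilinearity twists only scalars, not dimensions, because Frobenius is a field automorphism of $\Fqt$. So the usual argument goes through after noting that $y\mapsto H^{(q)}$-type maps preserve rank.
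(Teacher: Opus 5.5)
Your proof is correct. The paper does not prove this statement itself: it quotes part (a) from Bose--Chakravarti and part (b) from Chakravarti. So there is no in-text argument to compare against, and what you give is in substance the standard polarity proof, written out in full. Compared with the bare citation, your route gives three things:
\begin{enumerate}
\item[(i)] One mechanism handles both parts. Writing $\Sigma=\PP(p^{\perp})$, the single computation $\mathrm{rad}(h|_W)=W\cap\langle p\rangle$ gives rank $n$ or $n-1$ according as $h(p,p)\neq 0$ or $h(p,p)=0$.
\item[(ii)] It identifies the vertex of the cone in (a) as the tangency point $R=[p]$.
\item[(iii)] Your gradient computation matches the algebro-geometric tangent hyperplane with the polar hyperplane $R^{\perp}$. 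This is the notion of tangency the rest of the paper uses without comment, e.g.\ $\T_R(\U_3)$ in Lemma~\ref{plane involved}.
\end{enumerate}
The citation's only advantage is brevity.

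Two routine points deserve a sentence each in a write-up. First, $h$ is sesquilinear only on $\Fqt$-vectors, while $\Sigma\cap\U_n$ lives over $\overline{\FF}_q$. So justify that $\Sigma\cap\U_n$ is the Hermitian variety of $H_W$ through the polynomial identity $(By)^{T}H(By)^{(q)}=y^{T}\bigl(B^{T}HB^{(q)}\bigr)y^{(q)}$, where the columns of $B$ form an $\Fqt$-basis of $W$. This identity holds over $\overline{\FF}_q$ because $(By)^{(q)}=B^{(q)}y^{(q)}$. Add the one-line check that $B^{T}HB^{(q)}$ is Hermitian. Second, in (b), note that ``not tangent'' is unambiguous. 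If an $\Fqt$-hyperplane equals $V\bigl(x^{T}HR^{(q)}\bigr)$ for some $R\in\U_n(\overline{\FF}_q)$, then $R^{(q)}$ is proportional to an $\Fqt$-vector. Hence $R$ is $\Fqt$-rational and equals the pole $[p]$.
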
 
As we will be concerned with degenerate Hermitian varieties, we recall the subspace sections of such varieties.

\begin{theorem}\cite[Theorem 7.1]{BC}\label{disjoint from sing sp}
   Let $\U_{r,n}$ be a degenerate Hermitian variety of rank $r<n+1$ in $\PP^n$ and let $\Pi_{r-1}$ be an $\Fqt$-linear subspace of $\PP^n$ of dimension $r-1$ disjoint with singular space $\Pi_{n-r}$ of $\U_{r,n}$. Then $\U_{r,n}$ and $\Pi_{r-1}$ intersect in a non-degenerate Hermitian variety $\U_{r-1}$ in $\Pi_{r-1}$.
\end{theorem}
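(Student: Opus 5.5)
Since Theorem \ref{disjoint from sing sp} is a classical result of Bose and Chakravarti, I would prove it by a direct computation in the normal form \eqref{reduceherm}. First I apply the change of coordinates of \cite{BC}, so that $\U_{r,n}=V(x_0^{q+1}+\dots+x_{r-1}^{q+1})$ with Hermitian matrix $H=\mathrm{diag}(1,\dots,1,0,\dots,0)$ of rank $r$. The singular space is the kernel of $H$, namely
\[
\Pi_{n-r}=V(x_0,\dots,x_{r-1}),
\]
an $\Fqt$-subspace of dimension $n-r$. Second, let $\Pi_{r-1}$ be an $\Fqt$-subspace of dimension $r-1$ with $\Pi_{r-1}\cap\Pi_{n-r}=\emptyset$. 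Since the dimensions add to $n-1$, the two subspaces are complementary: their underlying vector subspaces $W$ (of dimension $r$) and $K=\ker H$ (of dimension $n-r+1$) satisfy $W\oplus K=\Fqt^{n+1}$.

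Third, I parametrize $\Pi_{r-1}$ by an $(n+1)\times r$ matrix $A$ over $\Fqt$ whose columns form a basis of $W$, so that $x=Ay$ with $y\in\PP^{r-1}$. Restricting the Hermitian form gives
\[
(Ay)^T H (Ay)^{(q)} = y^T\,(A^T H A^{(q)})\,y^{(q)}.
\]
The matrix $H'=A^TH A^{(q)}$ is again Hermitian of size $r\times r$, since $(H')^T = (A^{(q)})^T H^T A = (A^TH^{(q)\,T}\cdots)$; more cleanly, $(H')^{(q)}=A^{(q)T\,T}\cdots$ reduces, using $H^T=H^{(q)}$, to the identity $H'^T=H'^{(q)}$. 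So $\Pi_{r-1}\cap\U_{r,n}$ is the Hermitian variety of $H'$ inside $\Pi_{r-1}\cong\PP^{r-1}$.

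The key step is to show $\mathrm{rank}\,H'=r$, and I expect it to be the only real obstacle. I would argue via the projection $\pi:\Fqt^{n+1}\to\Fqt^{r}$, $x\mapsto(x_0,\dots,x_{r-1})$. Its kernel is exactly $K$, and $W\cap K=0$, so $\pi|_W$ is injective. Since $\dim W=r$, $\pi|_W$ is an isomorphism, and hence the top $r\times r$ block $B$ of $A$ is invertible. Because $H$ is zero outside the top-left block $I_r$, we get
\[
H'=B^T I_r B^{(q)}=B^TB^{(q)}.
\]
Here $\det B^{(q)}=(\det B)^q\neq 0$, so $H'$ is nonsingular. Therefore $H'$ has full rank $r$, and $\Pi_{r-1}\cap\U_{r,n}$ is a non-degenerate Hermitian variety $\U_{r-1}$ in $\Pi_{r-1}$, as claimed.
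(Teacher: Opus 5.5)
Your proof is correct. Note that the paper does not prove this statement at all: it quotes it from Bose--Chakravarti (Section~\ref{prel} explicitly defers all proofs to the cited references), so your argument supplies a self-contained proof where the paper relies on a citation.

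Your steps are all sound:
\begin{itemize}
\item you pass to the normal form \eqref{reduceherm};
\item you identify the singular space with $V(x_0,\dots,x_{r-1})$;
\item you observe that disjointness plus the dimension count forces $W\oplus K=\Fqt^{n+1}$;
\item you deduce that the top block $B$ is invertible because the projection has kernel $K$;
\item you conclude $H'=B^TB^{(q)}$, which is nonsingular.
\end{itemize}
This is the standard linear-algebra argument. A coordinate-free variant of the same idea is this: $K$ is the radical of the sesquilinear form $x^THy^{(q)}$, so in a basis adapted to $W\oplus K$ the Gram matrix is block diagonal with blocks $H'$ and $0$, forcing $\mathrm{rank}\,H'=\mathrm{rank}\,H=r=\dim W$. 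That variant avoids the normal form, while yours makes everything explicit.

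One step of your write-up should be cleaned up: the check that $H'$ is Hermitian is garbled as written. The correct line is $(H')^T=(A^{(q)})^TH^TA=(A^{(q)})^TH^{(q)}A=(H')^{(q)}$. The last equality uses $A^{(q^2)}=A$, which holds because $A$ has entries in $\Fqt$, that is, because $\Pi_{r-1}$ is defined over $\Fqt$.

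In fact you can bypass both this check and the rank computation. Once you have $H'=B^TB^{(q)}$, the substitution $z=By$, which is just $z=(x_0,\dots,x_{r-1})$, turns the restricted form into $(By)^T(By)^{(q)}=z_0^{q+1}+\dots+z_{r-1}^{q+1}$. Hence the projection $x\mapsto(x_0,\dots,x_{r-1})$ maps $\Pi_{r-1}\cap\U_{r,n}$ isomorphically onto the standard non-degenerate Hermitian variety of $\PP^{r-1}$.
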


Since the singular space of a rank $n$ Hermitian variety $P\U_{n-1}$ is $\{P\}$, the corollary below follows immediately from Theorem \ref{disjoint from sing sp}.
\begin{corollary}\label{hyp cup}
      A degenerate Hermitian variety $P\U_{n-1}$ of rank $n$ in $\PP^n$ and an $\Fqt$-hyperplane $\Sigma$ with $P\notin \Sigma$ intersect in a non-degenerate Hermitian variety $\U_{n-1}$ in $\Sigma\cong \PP^{n-1}$.
\end{corollary}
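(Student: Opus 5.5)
The statement is Corollary \ref{hyp cup}, and I will derive it by applying Theorem \ref{disjoint from sing sp} with $r=n$.

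First I identify the singular space of $P\U_{n-1}$. By the Bose--Chakravarti normal form \eqref{reduceherm}, after a projective change of coordinates over $\Fqt$ the variety $P\U_{n-1}$ is $V(x_0^{q+1}+\dots+x_{n-1}^{q+1})$. Its Hermitian matrix is $H=\mathrm{diag}(1,\dots,1,0)$. The singular space $\Pi_{n-r}$ is the projectivized kernel of $H$. Here that kernel is spanned by $e_n$, so with $r=n$ the singular space is $\Pi_0=\{[0:\dots:0:1]\}=\{P\}$. This is the vertex of the cone described after \eqref{nreduce}.

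Next I check the hypotheses of Theorem \ref{disjoint from sing sp}. A hyperplane $\Sigma$ of $\PP^n$ is a linear subspace of dimension $n-1=r-1$, and it is defined over $\Fqt$ by assumption. Because the singular space is the single point $P$ and $P\notin\Sigma$, the subspace $\Sigma$ is disjoint from $\Pi_{n-r}$. The theorem then says that $\Sigma\cap P\U_{n-1}$ is a non-degenerate Hermitian variety $\U_{n-1}$ inside $\Sigma\cong\PP^{n-1}$, which is the claim.

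The only point needing care is that the singular space is exactly $\{P\}$ and is invariant under the coordinate change. This holds because the kernel of $H$ transforms covariantly under projective changes of coordinates.

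As a sanity check, there is also a direct argument. Write $\Sigma=V(x_n-\ell(x_0,\dots,x_{n-1}))$; this form is possible because $P\notin\Sigma$ forces the coefficient of $x_n$ to be nonzero. Projecting from $P$ gives an $\Fqt$-isomorphism $\Sigma\to V(x_n)$ given by $(x_0,\dots,x_{n-1})$. Since the defining polynomial does not involve $x_n$, this isomorphism carries $\Sigma\cap P\U_{n-1}$ onto $V(x_0^{q+1}+\dots+x_{n-1}^{q+1})$ in $\PP^{n-1}$, which is non-degenerate of rank $n$.
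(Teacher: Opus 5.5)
Your proposal is correct and matches the paper's argument: the paper likewise notes that the singular space of $P\U_{n-1}$ is $\{P\}$ and applies Theorem \ref{disjoint from sing sp} with $r=n$. Your extra direct check (writing $\Sigma$ as a graph over $V(x_n)$, which works because $P\notin\Sigma$, and observing that the form omits $x_n$) is also sound, and it makes the corollary self-contained without the cited theorem.
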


\noindent We now conclude this subsection by describing the possibilities that arise when a line intersects a Hermitian variety. 

\begin{lemma}\cite[Section 7]{BC}\label{line}
Any line in $\PP^n (\Fqt)$ satisfies precisely one of the following:
\begin{enumerate}
\item[(a)] The line intersects $\U_{n}$ at precisely $1$ point. Such type of lines are called \textit{tangent lines}.
\item[(b)] The line intersects $\U_n$ at exactly $q+1$ points. Such type of lines are referred to as \textit{secant lines}. 
\item[(c)] The line is contained in $\U_n$.
\end{enumerate}
\end{lemma}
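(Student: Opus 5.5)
Since the statement concerns the intersection of an arbitrary $\Fqt$-line $\ell$ with a non-degenerate Hermitian variety $\U_n=V(x^T H x^{(q)})$, I plan to parametrize the line and reduce the question to counting the zeros of a Hermitian form in two variables. Choose two distinct $\Fqt$-points $A,B$ spanning $\ell$, represented by vectors $a,b\in\Fqt^{n+1}$. Every $\Fqt$-point of $\ell$ is then $[\lambda a+\mu b]$ with $[\lambda:\mu]\in\PP^1(\Fqt)$. Substituting into the defining form gives
\[
h(\lambda,\mu)=\alpha\,\lambda^{q+1}+\beta\,\lambda\mu^{q}+\beta^{q}\lambda^{q}\mu+\gamma\,\mu^{q+1},
\]
where $\alpha=a^THa^{(q)}$, $\gamma=b^THb^{(q)}$ and $\beta=a^THb^{(q)}$. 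Because $H$ is Hermitian, $\alpha,\gamma\in\Fq$, and $h$ is the Hermitian form attached to the $2\times 2$ Hermitian matrix $M=\begin{pmatrix}\alpha&\beta\\ \beta^q&\gamma\end{pmatrix}$. The statement therefore reduces to classifying $M$ by its rank $0$, $1$ or $2$.

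\textbf{Rank $0$.} Here $h\equiv 0$, so every point of $\ell$ lies on $\U_n$, which is case (c).

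\textbf{Rank $1$.} By the Bose--Chakravarti normal form \eqref{reduceherm} applied in $\PP^1$, after an $\Fqt$-change of coordinates on $\ell$ we may take $h=\lambda^{q+1}$. Its only $\Fqt$-zero is $[0:1]$, so $\ell$ meets $\U_n$ in exactly one point, which is case (a).

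\textbf{Rank $2$.} Again by \eqref{reduceherm}, we may take $h=\lambda^{q+1}+\mu^{q+1}$. This is a non-degenerate Hermitian variety $\U_1$ in $\PP^1$, and Theorem~\ref{nondeg point} with $n=1$ gives $(q+1)(q^2-1)/(q^2-1)=q+1$ points, which is case (b). As a direct check that avoids relying on the normal form: the point $\mu=0$ is not a zero, so we set $\mu=1$ and count $\lambda\in\Fqt$ with $\lambda^{q+1}=-1$. Since $-1\in\Fq^*$ and the norm map $\Fqt^*\to\Fq^*$ is surjective with kernel of size $q+1$, there are exactly $q+1$ solutions.

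The three cases are mutually exclusive because they give $1$, $q+1$ and $q^2+1$ points respectively, and these numbers are distinct. I expect the only delicate point to be justifying that the restriction of the form to $\ell$ is again a Hermitian form, so that \eqref{reduceherm} applies in dimension $1$; this follows from the identity $(a^THb^{(q)})^q=b^THa^{(q)}$, which is a consequence of $H^T=H^{(q)}$. Note that non-degeneracy of $\U_n$ is not even needed for this argument.
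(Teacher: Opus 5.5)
Your argument is correct. The paper gives no proof of its own here and simply cites Bose and Chakravarti (Section 7). Your route is the standard one behind that citation: the restriction of the form to an $\Fqt$-line is either identically zero or a rank $1$ or rank $2$ Hermitian form on $\PP^1$, which gives $q^2+1$, $1$ or $q+1$ rational points respectively.

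Your check that $(a^THb^{(q)})^q=b^THa^{(q)}$ and your norm-map count for $\lambda^{q+1}=-1$ make the argument self-contained. You are also right that non-degeneracy of $\U_n$ plays no role.
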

%%%%%%%%%%%%%%%%%%%%%%%%%%%%%%%%%%%%%%%%%%%%%%%

%%%%%%%%%%%%%%%%%%%%%%%%%%%%%%%%%%%%%%%%%%%%%%

\subsection{Preliminaries from algebraic geometry}\label{Prel from alg}

Here, we revisit several fundamental results from algebraic geometry and some well-known bounds on the number of rational points in a variety, which will be instrumental in the later part of the paper. We will rely on the notions of \textit{dimension} and \textit{degree} of a variety, as outlined in standard textbooks of Algebraic Geometry, such as Harris' book \cite{H}. We begin with the following definitions. A variety is said to be of \textit{equidimensional} if all its irreducible components have equal dimension. Additionally, two equidimensional varieties $X,Y\subset \PP^n$ are said to \textit{intersect properly} if $\codim(X\cap Y)=\codim X+\codim Y$.  We have the following proposition regarding the intersection of two hypersurfaces.

\begin{proposition}\cite[Proposition 2.13]{S1}\label{coprime}
Let $F, G \in \Fqt[x_0, x_1, \dots, x_n]$ be non-constant homogeneous polynomials having no common factors. Then 
\begin{enumerate}
\item[(a)] $V(F)$ and $V(G)$ intersect properly. 
\item[(b)] $V(F) \cap V(G)$ is equidimensional of dimension $n-2$.
\item[(c)] $\deg \left(V(F) \cap V(G)\right) \le \deg F \deg G$. 
\end{enumerate}
\end{proposition}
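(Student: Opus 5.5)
Since $F$ and $G$ have no common factor, I would argue via the standard dimension theory of projective varieties over $\overline{\FF}_q$, as in Harris \cite{H}, and then obtain the degree bound from a refined B\'ezout inequality. The plan has three steps, one for each part of the statement.

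\textbf{Step 1: dimension.} Every irreducible component of $V(F)\cap V(G)$ has dimension at least $n-2$. This follows from Krull's principal ideal theorem, or from the projective dimension theorem applied twice. For the upper bound, decompose $V(F)=\bigcup_i V(F_i)$, where the $F_i$ are the irreducible factors of $F$ over $\overline{\FF}_q$; each $V(F_i)$ is an irreducible hypersurface of dimension $n-1$. If some component $Z$ of $V(F)\cap V(G)$ had dimension $n-1$, then $Z$ would equal some $V(F_i)$ and lie inside $V(G)$. By the Nullstellensatz, $F_i$ would then divide $G$.

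The key subtlety is that ``no common factor'' is stated over $\Fqt$, while $F_i$ is defined over $\overline{\FF}_q$. I would handle this by passing to Galois orbits: the product of the Frobenius conjugates of $F_i$ is (a power of) an $\Fqt$-irreducible factor of $F$, and it would also divide $G$ (up to radical). Alternatively, one can note that the gcd of two polynomials does not change under field extension, since it is computed by the Euclidean algorithm over the coefficient field. Either way this contradicts the hypothesis. Hence all components have dimension exactly $n-2$, which proves (b). Part (a) follows immediately, because $\codim(V(F)\cap V(G))=2=1+1$.

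\textbf{Step 2: degree.} For (c), I would invoke the refined B\'ezout theorem (Fulton, or \cite[Example 18.17]{H}): for equidimensional varieties $X$ and $Y$ meeting properly, the sum of the degrees of the irreducible components of $X\cap Y$ is at most $\deg X\cdot\deg Y$. Here $\deg V(F)\le\deg F$ and $\deg V(G)\le\deg G$, with strict inequality if the polynomials have repeated factors. This gives $\deg(V(F)\cap V(G))\le \deg F\deg G$.

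A more elementary route is to intersect with a general linear subspace $L$ of dimension $2$, reducing to two plane curves with no common component. There, B\'ezout's theorem for curves gives at most $\deg F\deg G$ points, and the number of points of $V(F)\cap V(G)\cap L$ equals the degree of $V(F)\cap V(G)$.

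\textbf{Main obstacle.} I expect the only delicate point to be the field-of-definition issue in Step 1, namely ensuring that coprimality over $\Fqt$ implies coprimality over $\overline{\FF}_q$. The gcd-invariance argument settles it, and the remaining steps are standard citations.
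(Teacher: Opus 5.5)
The paper gives no proof of this proposition: it is quoted from \cite[Proposition 2.13]{S1}, and the preliminaries section defers all proofs to the cited references, so there is no internal argument to compare against. Your argument is correct and is the standard proof of this fact. For (a) and (b) you use Krull's principal ideal theorem (or the projective dimension theorem) for the lower bound on component dimensions, and the Nullstellensatz to rule out components of dimension $n-1$. For (c) you use B\'ezout, either refined or via a general $2$-plane section $L$. The section route works because $Z\cap L$ is finite, so $F|_L$ and $G|_L$ share no component, and $|Z\cap L|=\deg Z$ for general $L$ over the infinite field $\overline{\FF}_q$.

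The one point to tighten is the field-of-definition step. $\Fqt[x_0,\dots,x_n]$ is not Euclidean once there are two or more variables, so ``the gcd is computed by the Euclidean algorithm'' is not a valid justification as stated. To make that route work, you have to pass through $K(\hat{x})[x_j]$ and then apply Gauss's lemma. Your Galois-orbit argument already settles the point cleanly, because $\Fqt$ is perfect. Normalize $F_i$ to be monic for some monomial order. Its distinct Frobenius conjugates are pairwise non-associate irreducibles, each dividing both $F$ and $G$. Their product therefore lies in $\Fqt[x_0,\dots,x_n]$ and divides $F$ and $G$ exactly, so the ``(up to radical)'' qualifier is unnecessary. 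With that argument, the proof is complete.
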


%%%%%%%%%%%%%%%%%%%%%%%%%%%%%%%%%%%%%%%%%%%%%%%%%%

%%%%%%%%%%%%%%%%%%%%%%%%%%%%%%%%%%%%%%%%%%%%%%%%%%
We now state a result due to J. P. Serre \cite{S}, proved independently by S{\o}rensen in \cite{So} on the number of rational points in a hypersurface. We refer \cite{GL} for a recent proof.
\begin{theorem}[Serre's inequality]\label{serre}
Let $ F \in \Fq[x_0, x_1, \dots, x_n]$ be a non-zero homogeneous polynomial of degree $d \le q$. Then 
$$|V(F)(\FF_q)| \le dq^{n-1} + \pi_{n-2}(\Fq),$$
where $\pi_{n-2}(\Fq) = 1 + q + \dots + q^{n-2}$. Moreover, the
equality holds if and only if $V(F)$ is a union of $d$ hyperplanes defined over $\Fq$, all containing a common linear subspace of codimension $2$. 
\end{theorem}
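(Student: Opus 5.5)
We argue by induction on $n$. Let $X=V(F)(\Fq)$ and let $t$ be the number of $\Fq$-hyperplanes that are components of $V(F)$, so $t\le d$. For $n=1$ the claim says that a nonzero binary form of degree $d\le q$ has at most $d$ zeros in $\PP^1(\Fq)$, with equality iff it splits into $d$ distinct $\Fq$-linear factors; this is immediate. For general $n$ we split into two cases.

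\textbf{Case $t=d$.} Here $V(F)$ is a union of $d$ distinct $\Fq$-hyperplanes $H_1,\dots,H_d$. We add them one at a time. Two distinct hyperplanes meet in a codimension-$2$ subspace, so $|H_1\cup H_2|=2q^{n-1}+\pi_{n-2}(\Fq)$. The hyperplane $H_k$ adds $|H_k|-|H_k\cap(H_1\cup\dots\cup H_{k-1})|$ new points. This is at most $q^{n-1}$, because $H_k$ contains the codimension-$2$ space $H_1\cap H_k$, and equality requires $H_k\cap(H_1\cup\dots\cup H_{k-1})=H_1\cap H_k$. Summing gives the bound. Equality forces $H_i\cap H_k=H_1\cap H_k$ for all $i<k$, and hence that all $H_i$ contain the common subspace $H_1\cap H_2$.

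\textbf{Case $t<d$.} We fibre $\PP^n(\Fq)$ over the $q+1$ hyperplanes through a codimension-$2$ $\Fq$-subspace $L$, which gives
\[
|X| = |X\cap L| + \sum_{H\supset L}\big(|X\cap H|-|X\cap L|\big).
\]
A hyperplane $H$ that is a component of $V(F)$ contributes $q^{n-1}$. For any other $H$, $F|_H$ is a nonzero form of degree $d$ on $H\cong\PP^{n-1}$, so the induction hypothesis gives $|X\cap H|\le dq^{n-2}+\pi_{n-3}(\Fq)$. With $s$ the number of component hyperplanes through $L$ and $\ell=|X\cap L|$, this yields
\[
|X|\le \ell + sq^{n-1} + (q+1-s)\big(dq^{n-2}+\pi_{n-3}(\Fq)-\ell\big).
\]
The main obstacle is choosing $L$ well. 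The plan is as follows.
\begin{enumerate}
\item[(i)] If some $L$ is not contained in $X$, take $L$ inside a non-component hyperplane (for instance $L$ inside a non-component hyperplane not lying in $X$, which exists since $t<d\le q$). Then $\ell\le (d-1)q^{n-3}+\pi_{n-4}(\Fq)$ by induction in dimension $n-2$, or simply $\ell<\pi_{n-2}(\Fq)$. We choose $L$ through as few component hyperplanes as possible; generically $s\le 1$.
\item[(ii)] Substituting, the inequality reduces to an elementary comparison in which $t<d$ produces a strict slack. This gives $|X|<dq^{n-1}+\pi_{n-2}(\Fq)$, so equality cannot occur when $t<d$.
\end{enumerate}
If every admissible $L$ lies in $X$, then $\ell=\pi_{n-2}(\Fq)$. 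The computation then reduces to $s(q-d+1)\le q-d+1$, so we need an $L$ with $s\le 1$. Such an $L$ exists by counting codimension-$2$ subspaces of a non-component hyperplane that avoid the $t<q$ component hyperplanes.

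If this direct choice of $L$ proves awkward, the fallback is to average the displayed inequality over all codimension-$2$ subspaces $L$. Double counting turns the averaged right-hand side into an expression in $|X|$, $t$ and the point counts of the components, from which the bound and the strictness for $t<d$ are read off. In either route the case analysis for $t<d$ is the delicate step. The case $t=d$ and the characterization of equality then follow from the counting argument given above.
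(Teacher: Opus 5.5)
The paper does not prove this statement; it is quoted from Serre and S{\o}rensen. Your argument therefore has to stand on its own. Your base case $n=1$ and your case $t=d$ are correct. The case $t<d$, which carries all the content, has a genuine gap.

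\textbf{The fibration estimate runs the wrong way in $\ell$.} With $s=0$ your inequality reads $|X|\le (q+1)B-q\ell$, where $B=dq^{n-2}+\pi_{n-3}(\Fq)$. Equivalently,
\[
|X|\le dq^{n-1}+\pi_{n-2}(\Fq)+\bigl(dq^{n-2}+\pi_{n-3}(\Fq)-1-q\ell\bigr).
\]
The estimate improves as $\ell$ \emph{grows}. It gives Serre's bound only if $q\ell\ge dq^{n-2}+\pi_{n-3}(\Fq)-1$, that is, only if $L$ carries essentially the maximal number of points allowed for a codimension-$2$ space not contained in $X$.
\begin{enumerate}
\item[(a)] The upper bound on $\ell$ in step (i) is therefore useless, and the ``strict slack'' in step (ii) does not exist. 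For $\ell=0$ you overshoot by $dq^{n-2}+\pi_{n-3}(\Fq)-1>0$.
\item[(b)] Your case split is not the relevant one. What the computation needs is some $L\subseteq X$ with $s\le 1$, and such an $L$ need not exist. For example, take $t=0$ and $V(F)$ an elliptic quadric in $\PP^3$, which contains no $\Fq$-line.
\item[(c)] When $t\ge 1$ you can take $L\subset H_1$ avoiding $H_2,\dots,H_t$, and then $s=1$ gives exactly $dq^{n-1}+\pi_{n-2}(\Fq)$. But $(s-1)(q-d+1)=0$ supplies no strictness, so your claim that equality forces $t=d$ is unproved.
\item[(d)] Your fallback does not rescue this. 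Averaging over all $L$ is the same as double counting incidences of $X$ with all $\Fq$-hyperplanes, and it gives
\[
|X|\le dq^{n-1}+\pi_{n-2}(\Fq)+(t-1)\Bigl(1-\frac{B}{\pi_{n-1}(\Fq)}\Bigr).
\]
This is strictly below the bound for $t=0$, equal to it for $t=1$, and too weak for $t\ge 2$.
\end{enumerate}

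\textbf{What is missing.} Hyperplane components must be handled separately, as the paper itself does in Theorem~\ref{rank n} (Subcases 2a and 2b). If $t\ge 1$, write $F=\lambda G$ with $H_1=V(\lambda)$. Then $V(G)\setminus H_1$ is an affine hypersurface of degree at most $d-1$ in $\PP^n\setminus H_1\cong\AA^n$. It therefore has at most $(d-1)q^{n-1}$ rational points, by Ore/Schwartz--Zippel or by Proposition~\ref{lac} as in Lemma~\ref{outside hyperplane}. This gives $|X|\le \pi_{n-1}(\Fq)+(d-1)q^{n-1}$, which is exactly the bound. If $t=0$, the hyperplane double count above gives strict inequality.

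For the equality statement you still need one more ingredient: the equality case of the affine bound. An affine hypersurface of degree $e\le q-1$ with exactly $eq^{n-1}$ rational points is a union of $e$ parallel $\Fq$-hyperplanes. Their closures, together with $H_1$, are $d$ hyperplanes through a common codimension-$2$ subspace of $H_1$. Nothing in your plan supplies this step.
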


 In particular, any plane curve defined over $\Fqt$ of degree $d \leq q$ has at most $dq^2+1$ many $\Fqt$-rational points. Furthermore, a plane curve of degree $d\leq q$ that admits exactly $dq^2+1$ many $\Fqt$-rational points is a union of $d$ lines passing through a common $\Fqt$-point. We will also use bounds on the number of rational points of a variety that is not necessarily a hypersurface. To this end, we recall a result due to Lachaud and Rolland. For some recent proof, we refer to \cite{DG1}.

\begin{proposition}\cite[Prop. 2.3]{LR}\label{lac}
Let $X$ be an equidimensional projective (resp. affine) variety defined over a finite field $\Fq$. Further assume that $\dim X = \delta$ and $\deg X = d$.  Then
$$|X(\Fq)| \le d \pi_{\delta}(\Fq) \ \ \ \ \ \ (\mathrm{resp.} \ \  |X (\Fq)| \le d q^{\delta}),$$
where $\pi_{\delta}(\Fq) = 1 + q + \dots + q^{\delta}$.
\end{proposition}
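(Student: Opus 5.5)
The plan is to prove the affine bound first, by induction on $\delta$. Then the projective bound follows from it by splitting off a hyperplane at infinity and inducting once more.

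Throughout I work with geometric irreducible components, that is, over $\overline{\FF}_q$. The degree of an equidimensional variety is the sum of the degrees of its irreducible components of top dimension, and $|X(\Fq)|$ is at most the sum of the counts over the components. So it suffices to prove the bound for an irreducible variety $Y$ of dimension $\delta$ and degree $e$, with $Y$ defined over $\overline{\FF}_q$. We count only its points with coordinates in $\Fq$. Doing the induction in this slightly more general setting avoids having to worry about whether the pieces produced below are defined over $\Fq$.

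\emph{Affine case.} If $\delta=0$, then $Y$ consists of exactly $e$ points, so $|Y(\Fq)|\le e$. Now let $\delta\ge1$. Since $Y$ is not a point, some coordinate function $x_i$ is non-constant on $Y$. Then
\[
Y(\Fq)=\bigcup_{a\in\Fq}\bigl(Y\cap V(x_i-a)\bigr)(\Fq).
\]
For each $a$, the hyperplane $V(x_i-a)$ does not contain the irreducible $Y$. Hence $Y\cap V(x_i-a)$ is either empty or equidimensional of dimension $\delta-1$, by the dimension theorem. By the refined Bézout inequality (as in Proposition~\ref{coprime}(c), or Fulton's version for a variety meeting a hypersurface), the sum of the degrees of its components is at most $e\cdot1=e$. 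The induction hypothesis then gives at most $eq^{\delta-1}$ points over $\Fq$ for each $a$. Summing over the $q$ values of $a$ gives $|Y(\Fq)|\le eq^{\delta}$.

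\emph{Projective case.} Again it suffices to treat an irreducible $Y\subset\PP^n$ of dimension $\delta$ and degree $e$, by induction on $\delta$; the case $\delta=0$ is as before. For $\delta\ge1$, choose an $\Fq$-hyperplane $H$ not containing $Y$. Such an $H$ exists: if $Y$ lay in every $\Fq$-hyperplane, it would lie in their intersection, which is empty. Then
\[
Y(\Fq)=(Y\setminus H)(\Fq)\ \sqcup\ (Y\cap H)(\Fq).
\]
\begin{itemize}
\item Identify $\PP^n\setminus H$ with $\AA^n$ over $\Fq$. Then $Y\setminus H$ is an irreducible affine variety of dimension $\delta$ whose projective closure is $Y$, so its degree is $e$. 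The affine case gives at most $eq^\delta$ points.
\item The intersection $Y\cap H$ is equidimensional of dimension $\delta-1$ with degree at most $e$, by the same Bézout argument. Induction gives at most $e\,\pi_{\delta-1}(\Fq)$ points.
\end{itemize}
Adding the two bounds yields $e(q^\delta+\pi_{\delta-1}(\Fq))=e\,\pi_\delta(\Fq)$. Summing over components completes the proof.

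The main obstacle is the intersection-theoretic input used at each step. We need that cutting an irreducible variety by a hyperplane not containing it produces an equidimensional variety of dimension exactly one less, with total degree at most the original degree. This requires the refined Bézout inequality for a possibly non-transversal intersection, not the naive equality. I would quote it from Fulton's \emph{Intersection Theory} (Example 8.4.6). The remaining steps are bookkeeping, in particular checking that the affine degree agrees with the degree of the projective closure.
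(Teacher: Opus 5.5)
The paper gives no proof of this statement. It is quoted from Lachaud--Rolland \cite{LR}, with \cite{DG1} cited for a recent proof, so there is no internal argument to compare against.

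Your proof is correct and self-contained. Slicing by the $q$ parallel $\Fq$-hyperplanes $V(x_i-a)$ gives the affine bound $eq^{\delta}$. Splitting off an $\Fq$-hyperplane $H\not\supset Y$ then gives $eq^{\delta}+e\,\pi_{\delta-1}(\Fq)=e\,\pi_{\delta}(\Fq)$ in the projective case.

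You need less than the general refined B\'ezout inequality. For an irreducible projective $Y$ and a hyperplane $H\not\supset Y$, every component $Z_j$ of $Y\cap H$ has dimension $\delta-1$, and $\sum_j i(Y,H;Z_j)\deg Z_j=\deg Y$ (Hartshorne, \emph{Algebraic Geometry}, Thm.~I.7.7). Since each multiplicity is at least $1$, this already bounds $\sum_j \deg Z_j$ by $e$.

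In the affine step, the bookkeeping you flag is settled as follows. Homogenize with $x_0$. The closure of each component of $Y\cap V(x_i-a)$ is then a component of $\overline{Y}\cap V(x_i-ax_0)$, so the affine degrees of the slice's components sum to at most $e$.

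Passing to geometric irreducible components, not necessarily defined over $\Fq$, before inducting is what makes the projective step work. For a reducible $X$ there may be no $\Fq$-hyperplane avoiding all components; take $X$ to be the union of all $\Fq$-lines of $\PP^2$. For a single irreducible $Y$, one of the coordinate hyperplanes always works.

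A minor slip: in the base case an irreducible zero-dimensional $Y$ is a single point, so $e=1$.
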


Now we state a more recent result on the structure of a surface in $\PP^3$ that will be immensely useful in what follows.

\begin{theorem}\label{Datta}\cite[Theorem 3.1]{D}
    Let $Y \subset \PP^3$ be a surface of degree $d$ defined over $\Fq$ and $P \in Y(\Fq)$. Then one of the following holds: 
    \begin{enumerate}
        \item[(a)] $Y$ contains a plane defined over $\Fq$, 
        \item[(b)] $Y$ contains a cone over a plane curve defined over $\Fq$ with center at $P$, 
        \item[(c)] $\# \{\ell \subset \PP^3 \mid \ell \ \mathrm{is \ a \ line \ such \ that} \ P \in \ell \subset Y\} \le d(d - 1)$.
        \end{enumerate}
    \end{theorem}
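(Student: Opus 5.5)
The plan is to translate lines through $P$ into common zeros of a family of plane curves, and then bound those zeros by B\'ezout's theorem (Proposition \ref{coprime}).

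\textbf{Setup.} After an $\Fq$-linear change of coordinates we may assume $P=[0:0:0:1]$. Let $Y=V(F)$ and expand
$$F=\sum_{i=0}^{d} x_3^{d-i} f_i(x_0,x_1,x_2),$$
where each $f_i\in\Fq[x_0,x_1,x_2]$ is homogeneous of degree $i$. Since $P\in Y$, we have $f_0=0$. A line through $P$ meets $V(x_3)$ in a unique point $[a:b:c:0]$. It is contained in $Y$ exactly when $F(a,b,c,t)=0$ for all $t$, that is, when $f_i(a,b,c)=0$ for all $1\le i\le d$. So the lines through $P$ lying in $Y$ correspond bijectively to the points of $Z=V(f_1,\dots,f_d)\subset\PP^2$.

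\textbf{Degenerate cases.} If $f_d=0$, then $x_3\mid F$, so $Y$ contains the $\Fq$-plane $V(x_3)$ and (a) holds. Assume from now on that $f_d\neq 0$. Let $g=\gcd(f_1,\dots,f_d)$, computed in $\Fq[x_0,x_1,x_2]$. This gcd agrees with the gcd over $\overline{\FF}_q$, since gcds are unchanged by field extension.
\begin{itemize}
\item If $g$ is non-constant, then the cone over the plane curve $V(g)\subset V(x_3)$ with vertex $P$ is defined over $\Fq$. Every line of this cone through $P$ lies in $Y$, so the cone is contained in $Y$ and (b) holds.
\item If $f_1=\dots=f_{d-1}=0$, then $g=f_d$ is non-constant. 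So in the remaining case we may also assume that some $f_i$ with $i<d$ is nonzero.
\end{itemize}

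\textbf{Main case: $g$ constant.} Now $Z$ is finite, and we must show $|Z|\le d(d-1)$. A direct application of Proposition \ref{coprime} to two of the $f_i$ does not work, because a trivial overall gcd does not make any particular pair coprime. Instead I build an auxiliary form of degree $d-1$,
$$G=\sum_{i=1}^{d-1}\ell_i^{\,d-1-i}f_i,$$
where $\ell_1,\dots,\ell_{d-2}$ are linear forms with coefficients in $\overline{\FF}_q$ (for $i=d-1$ the exponent is $0$, so the term is just $f_{d-1}$). Each summand is either zero or homogeneous of degree $d-1$, so $G$ is homogeneous of degree $d-1$, and $Z\subset V(f_d)\cap V(G)$.

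I claim that $f_d$ and $G$ are coprime for a generic choice of the $\ell_i$.
\begin{itemize}
\item Consider an irreducible component $C$ of $V(f_d)$.
\item Since $Z$ is finite, some $f_j$ with $j<d$ does not vanish identically on $C$.
\item The condition "$G$ vanishes on $C$" is a closed condition on the coefficients of the $\ell_i$.
\item It is a proper closed condition: choose $\ell_j$ not vanishing identically on $C$, and choose the other $\ell_i$ to be linear forms vanishing at some point of $C$ where $f_j\ell_j\ne 0$.
\end{itemize}
I expect this step to be the main obstacle and would verify it carefully. The subtle point is the extreme case $j=d-1$, which needs the other terms to be killed appropriately; the evaluation-at-a-point argument above handles it.

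Since $V(f_d)$ has finitely many components, a generic choice avoids all of the bad loci. We can make such a choice over a large enough extension, because we only need to count geometric points. With $f_d$ and $G$ coprime, Proposition \ref{coprime}(c) gives $|Z|\le \deg\big(V(f_d)\cap V(G)\big)\le d(d-1)$, which is (c).
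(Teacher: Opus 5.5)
The paper gives no proof of this statement; it quotes it from \cite[Theorem 3.1]{D}. So there is no in-paper argument to compare against, and I judged your proposal on its own merits.

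\textbf{The step that fails as written.} Your reduction to $Z=V(f_1,\dots,f_d)\subset\PP^2$, the treatment of $f_d=0$ and of a nonconstant $\gcd$, the finiteness of $Z$, and the closing application of Proposition \ref{coprime} are all correct. The one step that does not work as written is the properness of the condition ``$G$ vanishes on $C$''. The summand $f_{d-1}$ carries no $\ell$-factor, so it cannot be killed by evaluation. When your chosen index satisfies $j\le d-2$, you get $G(x_0)=\ell_j(x_0)^{d-1-j}f_j(x_0)+f_{d-1}(x_0)$, and this can vanish. For example, take $d=3$, $f_1=x_0$, $f_2=-x_0x_1$, $f_3=x_2^3$, $C=V(x_2)$, $j=1$ and $\ell_1=x_1$, which does not vanish on $C$. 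Then $G=x_1x_0-x_0x_1=0$. So the case you flagged as subtle, $j=d-1$, is the easy one; the delicate case is $j<d-1$.

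\textbf{The fix.} It takes one line. Either choose $j$ maximal with $f_j\not\equiv 0$ on $C$: then either $j=d-1$ and every other summand is killed, or $f_{d-1}$ vanishes on $C$ and your recipe works. Or replace $\ell_j$ by $\lambda\ell_j$: $G(x_0)$ becomes a polynomial in $\lambda$ of positive degree with nonzero leading coefficient, so some $\lambda$ makes it nonzero. With this repair your proof is complete.

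\textbf{A shorter route.} You can avoid genericity entirely by staying in $\PP^3$. Write $F=x_3H+f_d$ with $H=\sum_{i=1}^{d-1}x_3^{d-1-i}f_i$; this is your $G$ with every $\ell_i$ replaced by $x_3$. A line $\ell$ through $P$ on $Y$ lies in the cone $V(f_d)$. Since $x_3H=F-f_d$ vanishes on $\ell$ and $\ell\not\subset V(x_3)$, the line also lies in $V(H)$. A common factor of $f_d$ and $H$ involves only $x_0,x_1,x_2$ because it divides $f_d$. It can be taken over $\Fq$, and it divides $F$, so it gives (b) immediately. If there is no common factor, Proposition \ref{coprime} makes $V(f_d)\cap V(H)$ an equidimensional curve of degree at most $d(d-1)$, and each such line is one of its components. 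The cases $f_d=0$ and $H=0$ give (a) and (b) respectively.

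\textbf{What each approach buys.} The $\PP^3$ version needs no genericity, no field extension, and no separate analysis of $\gcd(f_1,\dots,f_d)$, because a failure of coprimality turns directly into alternative (b). Your planar argument is in effect a hand proof of the refined B\'ezout bound: isolated common zeros in $\PP^2$ of forms of degrees $d_1\ge d_2\ge\cdots$ number at most $d_1d_2$. That is more general, but it is exactly what forces the genericity lemma.
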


   We conclude this subsection by recalling S{\o}rensen's bound, initially conjectured in \cite{SoT}, finally proved in \cite{BDH}, concerning the maximum size of the intersection of a surface and a non-degenerate Hermitian surface in $\PP^3$. 
 
 \begin{theorem} [S{\o}rensen's bound] \cite[Corollary 5.2]{BDH}\label{SoB}
Let $F \in \Fqt[x_0, x_1, x_2, x_3]$ be a homogeneous polynomial of degree $d$ and let $\U_3$ be a non-degenerate Hermitian surface in $\PP^3 (\Fqt)$. If $d \le q$, then
$$|V(F)(\Fqt) \cap \U_3| \le d(q^3 + q^2 - q) + q + 1.$$
Moreover, this bound is attained by $V(F)$ if and only if $V(F)$ is a union of $d$ planes $\Pi_1, \dots, \Pi_d$, each tangent to $\U_3$ and the planes intersect in a common line that is secant to $\U_3$. 
\end{theorem}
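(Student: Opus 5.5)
The plan is to prove the inequality by double counting over the lines of $\U_3$, and then to find the missing correction terms by analysing, through Theorem \ref{Datta}, the lines of $\U_3$ contained in $V(F)$.

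\textbf{Setup.} Put $Y=V(F)$ and $N=|Y(\Fqt)\cap \U_3|$. Recall the standard facts (e.g.\ from \cite{BC}):
\begin{enumerate}
\item[(i)] $\U_3$ contains exactly $(q+1)(q^3+1)$ lines defined over $\Fqt$;
\item[(ii)] every $\Fqt$-point of $\U_3$ lies on exactly $q+1$ of these lines, and they span the tangent plane at that point;
\item[(iii)] each such line carries $q^2+1$ rational points.
\end{enumerate}

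\textbf{Base count.} Counting pairs (point, line of $\U_3$ through it) gives
$$(q+1)N=\sum_{\ell\subset \U_3}|\ell\cap Y(\Fqt)| .$$
A line $\ell\not\subset Y$ meets $Y$ in at most $d$ points, and a line $\ell\subset Y$ contributes $q^2+1$. Let $L$ be the number of lines of $\U_3$ contained in $Y$. Then
$$(q+1)N\le d(q+1)(q^3+1)+L\,(q^2+1-d).$$
Now suppose $F$ has no factor in common with the Hermitian polynomial. By Proposition \ref{coprime}, $Y\cap\U_3$ is then a curve of degree at most $d(q+1)$, so $L\le d(q+1)$. This yields
$$N\le d(q^3+q^2+1-d).$$
This exceeds the claimed bound by exactly $(d-1)(q-d)-1$, so the crude estimate just misses and must be refined.

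\textbf{Refinement.} The refinement has two parts.
\begin{enumerate}
\item[(1)] \emph{Reduction to $F$ irreducible over $\Fqt$.} Write $F=\prod F_i$ with $\deg F_i=d_i$ and use
$$|Y\cap \U_3|\le \sum_i |V(F_i)\cap\U_3|-(\text{overlaps}).$$
If the target bound holds for each factor, it also holds for $F$, because the constant $q+1$ is subadditive: $\sum_i(q+1)\ge q+1$. The hard part of this step is tracking what happens when the overlaps are small.
\item[(2)] \emph{Plane components.} A plane is tangent to $\U_3$ or not. By Theorem \ref{hyperplane section}, it then meets $\U_3$ in $q^3+q^2+1$ points (a cone over $\U_1$) or in $q^3+1$ points (a curve $\U_2$). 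For $d$ planes, inclusion--exclusion along their common intersections shows that the maximum is attained only when all $d$ planes are tangent and share a common line. That line must be secant: it then meets $\U_3$ in $q+1$ points, whereas a line contained in $\U_3$ would give fewer points. This computation gives exactly $d(q^3+q^2-q)+q+1$ and also settles the equality case.
\end{enumerate}

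\textbf{Main obstacle: irreducible non-planar $Y$.} Here one must show that $L$ is much smaller than $d(q+1)$, or that the extra lines force compensating losses. For each $P\in Y\cap\U_3$, apply Theorem \ref{Datta}.
\begin{enumerate}
\item[(a)] Case (a) of Theorem \ref{Datta} cannot occur, since $Y$ is irreducible and not a plane.
\item[(b)] In case (c), at most $d(d-1)$ lines of $Y$ pass through $P$. Also, the lines of $\U_3$ through $P$ all lie in the tangent plane $T_P$, and $T_P\cap Y$ is a curve of degree $d$. Hence at most $\min(q+1,d)$ of them lie in $Y$.
\item[(c)] Case (b), where $Y$ contains a cone with vertex $P$, has to be handled separately. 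I would intersect with $T_P$ and reduce to a plane-curve count via Serre's inequality (Theorem \ref{serre}), or use an induction on $d$.
\end{enumerate}
Feeding the bound of at most $d$ contained lines per point back into the incidence count should replace the $L(q^2+1-d)$ term by one small enough to recover the deficit $(d-1)(q-d)-1$. I expect this to be the main difficulty. Should the direct incidence count not suffice, I would weight points by how many contained lines pass through them, and use that such points lie on a curve of degree at most $d(q+1)$. That would let me bound the extra contribution via Proposition \ref{lac}.
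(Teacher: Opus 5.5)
The paper does not prove this statement: it is quoted from \cite[Corollary 5.2]{BDH} and used as a black box. Your proposal therefore has to stand alone, and it does not. The non-planar case, which carries the whole content of S{\o}rensen's conjecture, is only announced, and the mechanism you propose for it cannot work.

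First, a slip. The base count gives $N\le d(q^3+q^2+2-d)$, so the excess over the target is $(d-1)(q+1-d)$. Your formula would give $q^3+q^2$ for $d=1$, which is fewer than the $q^3+q^2+1$ points of a tangent plane section.

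More seriously, the slack in this count does not lie in $L$. Take $\U_3=V(x_0x_3^q+x_0^qx_3-x_1x_2^q-x_1^qx_2)$ and the irreducible quadric $Y=V(x_0x_3-x_1x_2)$. On the parametrization $(ac:ad:bc:bd)$ the Hermitian form becomes $(ab^q-a^qb)(cd^q-c^qd)$, so $Y\cap\U_3$ is a grid of $q+1$ lines from each ruling. Here $L=2(q+1)=d(q+1)$, and every point lies on at most $d$ contained lines. Yet your count still only gives $N\le 2q^3+2q^2$, which is above the target $2q^3+2q^2-q+1$; the true value is $2q^3+q^2+1$. The extremal configuration itself also has $L=d(q+1)$.

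So neither a better bound on $L$ nor any per-point bound on contained lines (from Theorem \ref{Datta} or from $T_P\cap Y$) can close the gap. The saving must come from lines $\ell\subset\U_3$ with $\ell\not\subset Y$ and $|\ell\cap Y(\Fqt)|<d$, and nothing in your plan controls those. The fallback via Proposition \ref{lac} yields only $d(q+1)(q^2+1)$, which is weaker than what you already have.

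The reduction to irreducible $F$ is also invalid as stated. Summing the target over $k$ factors gives $d(q^3+q^2-q)+k(q+1)$, which exceeds the target by $(k-1)(q+1)$; the inequality $\sum_i(q+1)\ge q+1$ works against you, not for you. You would need the overlaps to contain at least $(k-1)(q+1)$ points of $\U_3(\Fqt)$, and they need not. For example, a tangent plane $T_R$ and a second plane through a line of $T_R$ meeting $\U_3$ only in $R$ share just one point of $\U_3$. The bound survives there only because the second plane is non-tangent and far from extremal.

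A correct reduction therefore needs quantitative information about non-extremal factors, which you do not have. The same gap leaves two further things unproved: the mixed case (planes together with non-planar components), and the strict inequality needed for the equality characterization. Your treatment of unions of planes in (2) is fine, but it is the easy part of the statement.
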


\subsection{Basics of linear codes}\label{linear code} Let $m$ and $k$ be positive integers. An $[m,k]_q$-code is a $k$-dimensional linear subspace of $\FF_q^m$, where $\FF_q^m$ is viewed as an $ m$-dimensional vector space over $\Fq$. Let $C$ be an $[m,k]_q$-code. The integers $m$ and $k$ are called the \textit{length} and the \textit{dimension} of $C$, respectively, and the elements of $C$ are referred to as \textit{codewords}. 

For a codeword $\mathbf{c}=(c_1,\dots,c_m)\in C$, the \textit{Hamming weight} of $\mathbf{c}$, denoted by $\wt(\mathbf{c})$, is defined as the number of indices $i\in\{1,\dots,m\}$ for which $c_i\neq 0$. The \textit{minimum distance} of $C$, denoted by $\mathsf{d}(C)$, is defined as
\[
\mathsf{d}(C)=\min\{\wt(\mathbf{c}): \mathbf{c}\in C,\; \mathbf{c}\neq \mathbf{0}\}.
\]
Codewords $\mathbf{c}\in C$ satisfying $\wt(\mathbf{c})=\mathsf{d}(C)$ are called \textit{minimum weight codewords} of $C$.

%%%%%%%%%%%%%%%%%%%%%%%%%%%%%%%%%%%%%%%%%%%%%%%%%%%%%%%%%%%%%%%%%%%%%%%%%%%%%%%%%%%%%%%%%%%%%%%%%%%%%%%%%%%%%%%%%%%%%%%%%%%%%%%%%%%%%%%%%%%%%%%%%%%   
\section{Intersection of a degenerate Hermitian variety of rank $n$ and a hypersurface}\label{degenerate of rank n}
Throughout this Section $V(F)$ will denote an $\Fqt$-hypersurface of degree $d$ with $d\leq q$ and by $P\U_{n-1}$ we denote the Hermitian variety of rank $n$ in $\PP^n$, defined over $\Fqt$. As mentioned above, the variety $P\U_{n-1}$ is a cone with vertex at an $\Fqt$-rational point $P$ and base as a non-degenerate Hermitian variety $\U_{n-1}$ in $\PP^{n-1}$.
%%%%%%%%%%%%%%%%%%%%%%%%%%%%%%%%%%%%%
\begin{lemma}\label{outside hyperplane}
 Let $n\geq 3$. If $\Sigma$ is an $\Fqt$-hyperplane in $\PP^n$  such that $P \not\in \Sigma$ and $\Sigma \subseteq  V(F)$, then $|P\U_{n-1} \cap V(F)\cap \Sigma^C(\Fqt)| \le (d-1)(q+1)q^{2n-4}$, where $\Sigma^C = \PP^n \setminus \Sigma$. 
\end{lemma}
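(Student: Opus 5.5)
Since $\Sigma\subseteq V(F)$, write $F=LG$ with $L$ the linear form defining $\Sigma$ and $G$ homogeneous of degree $d-1$ over $\Fqt$. Outside $\Sigma$ we have $L\neq 0$, so
\[
P\U_{n-1}\cap V(F)\cap \Sigma^C = P\U_{n-1}\cap V(G)\cap \Sigma^C .
\]
If $d=1$ then $G$ is a nonzero constant and the set is empty, so assume $d\ge 2$. The idea is to project from the vertex $P$ onto $\Sigma$. Since $P\notin\Sigma$, Corollary \ref{hyp cup} shows that $B:=P\U_{n-1}\cap\Sigma$ is a non-degenerate Hermitian variety $\U_{n-1}$ in $\Sigma\cong\PP^{n-1}$. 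Every $\Fqt$-point $Q\neq P$ of the cone lies on a unique line $PR$ with $R\in B(\Fqt)$. Hence $P\U_{n-1}(\Fqt)\cap\Sigma^C$ is the disjoint union of $\{P\}$ and the affine lines $PR\setminus\{R\}$, $R\in B(\Fqt)$, each having $q^2$ points. The point $P$ needs separate care, discussed below.

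Fix $R\in B(\Fqt)$ and restrict $G$ to the line $PR$. If $G$ does not vanish identically on $PR$, then $G|_{PR}$ has at most $d-1$ zeros, so the line contributes at most $d-1$ points. If $PR\subseteq V(G)$, it contributes $q^2$ points. Let $N$ be the number of $R\in B(\Fqt)$ with $PR\subseteq V(G)$. Then
\[
|\cdots|\le 1+N q^2+(|B(\Fqt)|-N)(d-1).
\]
To bound $N$, choose coordinates with $P=[0:\dots:0:1]$ and $\Sigma=V(x_n)$. Expand $G=\sum_{i} x_n^{i}G_i(x_0,\dots,x_{n-1})$. The line $PR$ lies in $V(G)$ if and only if $R$ is a common zero of all the $G_i$. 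Pick some $G_i\neq 0$ (it exists since $G\neq 0$); it is a form in $n$ variables of degree at most $d-1$, and the $R$ counted by $N$ lie in $V(G_i)\cap\U_{n-1}$ inside $\PP^{n-1}$. So $N$ is at most the number of $\Fqt$-points of a degree-$\le d-1$ hypersurface section of a non-degenerate Hermitian variety $\U_{n-1}$ in $\PP^{n-1}$.

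\textbf{Main obstacle.} This is the step that decides the constant $(q+1)q^{2n-4}$. I would first try the crude bound: $\U_{n-1}$ (absolutely irreducible for $n\ge3$) is not contained in $V(G_i)$ since $\deg G_i<q+1$. Proposition \ref{coprime} then makes $V(G_i)\cap\U_{n-1}$ equidimensional of dimension $n-3$ and degree $\le (d-1)(q+1)$, and Proposition \ref{lac} gives $N\le (d-1)(q+1)\pi_{n-3}(\Fqt)$. Consequently $Nq^2\le (d-1)(q+1)(q^{2n-4}+\dots+q^2)$.

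The remaining slack must absorb three terms: the lower-order part of $\pi_{n-3}$, the $(d-1)(|B(\Fqt)|-N)$ term from non-contained lines, and the point $P$. To make this work, I would refine the count in two ways. First, use the affine version of Proposition \ref{lac}: bounding the affine part of the intersection directly as a variety in $\AA^n=\Sigma^C$ would give dimension $n-2$ and degree $\le (d-1)(q+1)$, hence at most $(d-1)(q+1)q^{2(n-2)}=(d-1)(q+1)q^{2n-4}$ points. This is exactly the stated bound. So the cleanest route is the following. Observe that $G$ does not vanish on all of $P\U_{n-1}$: since $\deg G\le q-1<q+1$ and the cone is absolutely irreducible for $n\ge3$, $G$ and $x_0^{q+1}+\dots+x_{n-1}^{q+1}$ have no common factor. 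Then Proposition \ref{coprime} makes $X=V(G)\cap P\U_{n-1}$ equidimensional of dimension $n-2$ with $\deg X\le (d-1)(q+1)$. Applying the affine bound of Proposition \ref{lac} to $X\cap\Sigma^C\subset\AA^n$ gives the result.

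The delicate point is justifying that the affine part $X\cap\Sigma^C$ is still equidimensional of dimension $n-2$ with degree at most $\deg X$. Components of $X$ contained in $\Sigma$ are simply dropped, and the others restrict to affine varieties of the same dimension and degree. I would verify this carefully, and fall back on the line-by-line projection count above if needed.
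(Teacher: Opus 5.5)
Your final route is correct and is essentially the paper's proof. You write $F=LG$ and use that $\deg G=d-1<q+1$ and that the cone is absolutely irreducible for $n\ge 3$. This gives an equidimensional intersection of dimension $n-2$ and degree at most $(d-1)(q+1)$, to which you apply the affine Lachaud--Rolland bound in $\Sigma^C\cong\AA^n$; the paper phrases the same argument directly as an affine intersection in $\Sigma^C$ rather than restricting a projective one.

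Your ``delicate point'' is harmless, since each component not contained in $\Sigma$ meets $\Sigma^C$ in a dense open subset with the same closure. The line-by-line fallback is not needed, and with the crude bound on $N$ it would not reach the constant $(d-1)(q+1)q^{2n-4}$ anyway.
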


\begin{proof}
    Since $\Sigma \subseteq V(F)$, it follows that $V(F) \cap \Sigma^C$ is an affine hypersurface of degree at most $d-1$. Furthermore, since $n\geq 3$, the degenerate Hermitian variety $P\U_{n-1}$ is an irreducible hypersurface of degree $q+1$. Consequently, the intersection $P\U_{n-1} \cap \Sigma^C$ is an irreducible affine hypersurface of degree $q+1$ contained in $\Sigma^C\cong \AA^n$. As $d\leq q$, the affine hypersurfaces $\left(V\left(F\right) \cap \Sigma^C\right)$ and $\left(P\U_{n-1} \cap \Sigma^C\right)$ have no common components, and hence, their intersection is a complete intersection of degree at most $(d-1)(q+1)$ and $\dim \left(P\U_{n-1} \cap V\left(F\right) \cap \Sigma^C\right) = n-2$. Thus Proposition \ref{lac} entails that $|P\U_{n-1} \cap V(F) \cap \Sigma^C(\Fqt)| \le (d-1)(q+1)q^{2n-4}$.
\end{proof}
%%%%%%%%%%%%%%%%%%%%%%%%%%%%%%%%%%%%%%%%%%

\begin{lemma}\label{missing P}
  Let $n\geq 3$. If $P\notin V(F)$, then $$|P\U_{n-1}\cap V(F)(\Fqt)|\leq d|\U_{n-1}(\Fqt)|< |\U_{n-1}(\Fqt)|+(d-1)(q+1)q^{2n-4}.$$   
\end{lemma}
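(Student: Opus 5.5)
We split the statement into the two inequalities and treat them separately.

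For the first inequality, we use the cone structure of $P\U_{n-1}$. Every $\Fqt$-point of $P\U_{n-1}$ other than $P$ lies on a unique line through $P$. Such a line joins $P$ to a point $Q$ of the base $\U_{n-1}(\Fqt)$, where the base is taken inside an $\Fqt$-hyperplane $\Sigma$ with $P\notin\Sigma$, as in Corollary~\ref{hyp cup}. So $P\U_{n-1}(\Fqt)$ is the disjoint union of $\{P\}$ and the sets $\ell_Q(\Fqt)\setminus\{P\}$, with $Q\in\U_{n-1}(\Fqt)$ and $\ell_Q=PQ$.

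Fix such a line $\ell_Q$. Since $P\notin V(F)$, the restriction of $F$ to $\ell_Q$ is a nonzero binary form of degree $d$. Hence $V(F)$ meets $\ell_Q$ in at most $d$ points, and none of them is $P$. Summing over the $|\U_{n-1}(\Fqt)|$ lines $\ell_Q$, and noting that $P$ itself is not in $V(F)$, gives
\[
|P\U_{n-1}\cap V(F)(\Fqt)|\le d\,|\U_{n-1}(\Fqt)|.
\]

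For the strict inequality, it suffices to show
\[
(d-1)|\U_{n-1}(\Fqt)|<(d-1)(q+1)q^{2n-4}.
\]
When $d=1$ both sides are $0$, so this approach fails. We return to this case at the end.

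For $d\ge 2$, we divide by $d-1$ and need $|\U_{n-1}(\Fqt)|<(q+1)q^{2n-4}$. By Theorem~\ref{nondeg point},
\[
|\U_{n-1}(\Fqt)|=\frac{(q^{n-1}-(-1)^{n-1})(q^{n}-(-1)^{n})}{q^2-1}.
\]
One of the two factors $q^{n-1}\pm1$, $q^n\pm1$ is divisible by $q+1$, and the other, together with the remaining $q-1$, has to be bounded. Consider the parity of $n$.
\begin{itemize}
\item If $n$ is even, the count is $\frac{(q^{n-1}+1)(q^n-1)}{q^2-1}$. Here $\frac{q^{n-1}+1}{q+1}=q^{n-2}-q^{n-3}+\dots+1$ and $\frac{q^n-1}{q-1}=1+q+\dots+q^{n-1}$.
\item If $n$ is odd, the count is $\frac{(q^{n-1}-1)(q^n+1)}{q^2-1}$, with the analogous factorization.
\end{itemize}
In both cases the leading term of the product is $q^{2n-3}$ plus lower-order terms, while the target is $(q+1)q^{2n-4}=q^{2n-3}+q^{2n-4}$. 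A direct comparison should give
\[
|\U_{n-1}(\Fqt)|=q^{2n-3}+(\text{terms bounded by } q^{2n-5}\text{-order})<q^{2n-3}+q^{2n-4}.
\]
For example, at $n=3$ we get $q^3+1<q^3+q^2$. In general I would check this by clearing denominators: show
\[
(q^{n-1}\mp1)(q^n\pm1)<(q^2-1)(q+1)q^{2n-4}
\]
by expanding both sides. The right side is $q^{2n-1}+q^{2n-2}-q^{2n-3}-q^{2n-4}$ and the left side is $q^{2n-1}\pm(q^{n-1}-q^n)-1$. Since $n\ge3$ gives $q^{2n-2}-q^{2n-3}-q^{2n-4}>q^n$ for $q\ge2$, this settles the case $d\ge 2$.

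The main obstacle is the case $d=1$, where the strict inequality $|\U_{n-1}(\Fqt)|<|\U_{n-1}(\Fqt)|$ is false. I expect the intended statement implicitly assumes $d\ge2$, since this lemma is to be compared with Lemma~\ref{outside hyperplane}. Alternatively the second inequality is meant as $\le$, with strictness only for $d\ge2$. I would state the lemma with that caveat and prove strictness for $d\ge2$ as above.
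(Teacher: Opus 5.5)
Your argument is essentially the paper's. You bound each of the $|\U_{n-1}(\Fqt)|$ lines $PQ$ by $d$ points of $V(F)$, then compare $(d-1)|\U_{n-1}(\Fqt)|$ with $(d-1)(q+1)q^{2n-4}$ using the Bose--Chakravarti count. The paper writes the difference as $\frac{d-1}{q^2-1}\{q^{2n-4}(q^2-q-1)-(-1)^n q^{n-1}(q-1)+1\}$, which is your cleared-denominator inequality.

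Your $d=1$ caveat is right, and it is a defect in the statement itself rather than in your proof. At $d=1$ this difference is $0$. Moreover, any $\Fqt$-hyperplane missing $P$ meets $P\U_{n-1}$ in exactly $|\U_{n-1}(\Fqt)|$ points, so the strict inequality needs $d\ge 2$. The paper's proof overlooks the factor $d-1$, although its later uses of this lemma only need the non-strict inequality.

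One step of yours is false as written. You claim $q^{2n-2}-q^{2n-3}-q^{2n-4}>q^n$ for all $n\ge 3$, $q\ge 2$, but this fails at $q=2$: it reads $4>8$ for $n=3$ and $16>16$ for $n=4$.

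\begin{itemize}
\item For odd $n$ this is harmless. There your left side is $q^{2n-1}-(q^n-q^{n-1})-1<q^{2n-1}$, and the inequality is immediate.
\item For $(n,q)=(4,2)$ your justification does not go through, even though the conclusion is true.
\end{itemize}

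The fix is to compare with what is actually needed, namely $q^{n-1}(q-1)=q^n-q^{n-1}$, not $q^n$. Since $q^{2n-4}\ge q^{n-1}$ for $n\ge 3$ and $q^2-q-1\ge q-1$ for $q\ge 2$, you get $q^{2n-4}(q^2-q-1)-(-1)^nq^{n-1}(q-1)+1\ge 1>0$ for both parities. The paper reaches the same positivity by splitting into $n$ odd/even and $q\ge 3$ versus $q=2$.
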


\begin{proof}
    Since $P\notin V(F)$, any line of the cone $P\U_{n-1}$ that passes through $P$ intersects the hypersurface $V(F)$ in at most $d$ many $\Fqt$-rational points. Thus we conclude that $$|P\U_{n-1}\cap V(F)(\Fqt)|\leq d  |\U_{n-1}(\Fqt)|.$$ Now, a straightforward computation yields that 
    \begin{align}
        &\left(|\U_{n-1}(\Fqt)|+(d-1)(q+1)q^{2n-4} \right)- d|\U_{n-1}(\Fqt)|\notag\\
        &=\frac{d-1}{q^2-1}\left\{ q^{2n-4}(q^2-q-1)-(-1)^nq^{n-1}(q-1)+1 \right\}.\label{easy com}
    \end{align}
We consider two cases.

\textbf{Case 1:} \textit{$n$ is odd.} From the expression in the braces of Equation \eqref{easy com} we see that $ q^{2n-4}(q^2-q-1)+q^{n-1}(q-1)+1>0.$

\textbf{Case 2:} \textit{$n$ is even.} In this case the expression in the braces of Equation \eqref{easy com} becomes $q^{2n-4}(q^2-q-1)-q^{n-1}(q-1)+1$. Since $(q^2-q-1)-(q-1)=q(q-2)>0$ for $q\geq 3$ so it follows that $$q^{2n-4}(q^2-q-1)-q^{n-1}(q-1)+1>0$$ for $q\geq 3$. It is easy to check that the above expression remains positive for $q=2$, since $n\geq 3$. Combining the two cases yields the lemma.
\end{proof}

%%%%%%%%%%%%%%%%%%%%%%%%%%%%%%%%%%%%%%%%%%%%%
We are now ready to present one of the main theorems concerning an upper bound of the number of $\Fqt$-rational points in the intersection of a hypersurface and $P\U_{n-1}$.

\begin{theorem}\label{rank n}
  Let $n\geq 3$. We have $$|P\U_{n-1}\cap V(F)(\Fqt)|\leq \max \left\{|\U_{n-1}(\Fqt)|+(d-1)(q+1)q^{2n-4}, \ 1+q^2 M_{n-1}(d) \right\},$$ where $M_{n-1}(d):= \max_{V(G)}|\U_{n-1}\cap V(G)(\Fqt)|$ and the maximum is taken over all the $\Fqt$-hypersurfaces $V(G)$ of degree $d\leq q$ in $\PP^{n-1}$.   
\end{theorem}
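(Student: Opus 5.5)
Split into cases according to whether $P$ lies on $V(F)$, and then according to whether $V(F)$ is a cone with vertex $P$.

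\textbf{Case 1: $P\notin V(F)$.} Lemma \ref{missing P} applies directly. It gives a bound strictly below $|\U_{n-1}(\Fqt)|+(d-1)(q+1)q^{2n-4}$, so we are done.

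\textbf{Case 2: $P\in V(F)$.} Choose coordinates with $P=[0:\dots:0:1]$, and let $\Sigma=V(x_n)$ carry the base $\U_{n-1}$. Write
\[
F=\sum_{i=0}^{d} x_n^{d-i}F_i(x_0,\dots,x_{n-1}),
\]
with $F_i$ homogeneous of degree $i$. Since $P\in V(F)$, we have $F_0=0$. Every $\Fqt$-point of $P\U_{n-1}$ other than $P$ lies on a unique line $PQ$ with $Q\in\U_{n-1}(\Fqt)\subset\Sigma$. Restricted to the points $Q+tP$, $F$ is a polynomial in $t$ of degree at most $d-1$. Its coefficients are $F_i(Q)$.

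\textbf{Subcase 2a: $F$ is a cone with vertex $P$.} This means $F$ involves only $x_0,\dots,x_{n-1}$, i.e.\ $F=F_d$. Then each line $PQ$ lies either entirely in $V(F)$ or meets it only at $P$. Hence
\[
|P\U_{n-1}\cap V(F)(\Fqt)| = 1+q^2\,|\U_{n-1}\cap V(F_d)(\Fqt)| \le 1+q^2 M_{n-1}(d),
\]
where $V(F_d)$ is regarded as a hypersurface of degree $d$ in $\Sigma\cong\PP^{n-1}$.

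\textbf{Subcase 2b: $F$ is not a cone with vertex $P$.} Then some $F_i$ with $i<d$ is nonzero; let $j$ be the smallest index with $F_j\neq0$.

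A line $PQ$ lies in $V(F)$ exactly when $F_i(Q)=0$ for all $i$. If it does not lie in $V(F)$, it meets $V(F)$ in at most $d-1$ affine points besides $P$.

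I would rather use a geometric decomposition than count line by line. Pick an $\Fqt$-hyperplane $\Sigma'$ not through $P$. Count separately the points of $P\U_{n-1}\cap V(F)$ in $\Sigma'$ and those off $\Sigma'$.
\begin{itemize}
\item By Corollary \ref{hyp cup}, the points in $\Sigma'$ number at most $|\U_{n-1}(\Fqt)|$.
\item For the points off $\Sigma'$, argue as in Lemma \ref{outside hyperplane}. On the affine chart $\Sigma'^C\cong\AA^n$, the irreducible cone $P\U_{n-1}$ (degree $q+1>d$) shares no component with $V(F)$. Their intersection is then a complete intersection of dimension $n-2$ and degree at most $d(q+1)$. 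Proposition \ref{lac} then bounds its points by $d(q+1)q^{2n-4}$.
\end{itemize}

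\textbf{The main obstacle.} The bound just obtained has $d$ where the theorem needs $d-1$. To gain the factor, I would choose $\Sigma'$ well.

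\begin{itemize}
\item If $V(F)$ contains an $\Fqt$-hyperplane $\Sigma'$ with $P\notin\Sigma'$, Lemma \ref{outside hyperplane} gives exactly $(d-1)(q+1)q^{2n-4}$ off $\Sigma'$, and the total is at most $|\U_{n-1}(\Fqt)|+(d-1)(q+1)q^{2n-4}$.
\item Otherwise, restrict $F$ to the line $PQ$, where it has the factor $t^{j}$ and at most $d-j\le d-1$ further roots off $P$. Parametrize by $t$ in the chart where $P$ is at infinity, i.e.\ take $\Sigma'$ through no special structure.
\item Then count the points of $P\U_{n-1}\setminus\{P\}$ line by line. Lines $PQ$ contained in $V(F)$ correspond to $Q\in\U_{n-1}\cap V(F_j,\dots,F_d)$. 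The remaining lines contribute at most $d-1$ points each.
\item This gives the crude bound $1+q^2N+(d-1)(|\U_{n-1}(\Fqt)|-N)$, where $N=|\U_{n-1}\cap V(F_j,\dots,F_d)(\Fqt)|$.
\item Here $N\le|\U_{n-1}\cap V(F_j)(\Fqt)|$, and $F_j$ has degree $j<d$, so $N$ is controlled by Lemma \ref{missing P}-type bounds in dimension $n-1$.
\end{itemize}

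Finally, I would compare this crude bound with the two terms of the maximum by an explicit computation of the kind in \eqref{easy com}, using $N\le M_{n-1}(d)$. This comparison is the step I expect to be hardest. If it fails, the fallback is the Proposition \ref{lac} argument on the curve of lines inside $V(F)$.
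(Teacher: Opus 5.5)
\paragraph{The gap.} Your Case~1, your cone subcase, and the branch where $V(F)$ contains an $\Fqt$-hyperplane $\Sigma'$ with $P\notin\Sigma'$ are all correct. The gap is the remaining branch: $P\in V(F)$, $V(F)$ not a cone, and no $\Fqt$-hyperplane missing $P$ contained in $V(F)$. You leave this as an unverified comparison, and the comparison you propose fails.

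Your line-by-line bound $1+q^2N+(d-1)(|\U_{n-1}(\Fqt)|-N)$ is increasing in $N$. Inserting $N\le M_{n-1}(d)$ therefore gives $1+q^2M_{n-1}(d)+(d-1)\bigl(|\U_{n-1}(\Fqt)|-M_{n-1}(d)\bigr)$. This exceeds $1+q^2M_{n-1}(d)$ as soon as $d\ge 2$, and for $n=3,4$ the term $1+q^2M_{n-1}(d)$ is at least the other term of the maximum.

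Inserting $N\le M_{n-1}(d-1)$ does not rescue it either. For $n=3$, $d=3$ you get $1+3q^2(q+1)+(q^3-q^2-4q-2)$, which is too big for every $q\ge 3$.

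You could sharpen the per-line count to $d-j$ and use $N\le M_{n-1}(j)$. Then what you need is
$q^2\bigl(M_{n-1}(d)-M_{n-1}(j)\bigr)\ge (d-j)\bigl(|\U_{n-1}(\Fqt)|-M_{n-1}(j)\bigr)$.
You can check this for $n=3$ (B\'ezout) and $n=4$ (S{\o}rensen's bound). For general $n$, however, it is a growth statement about the unknown function $M_{n-1}$, which is not available. The suggested fallback via Proposition~\ref{lac} is not yet an argument. Also, Lemma~\ref{missing P} concerns the cone, not the non-degenerate $\U_{n-1}$, so it gives no control on $N$.

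\paragraph{The missing idea.} Average over all hyperplanes avoiding $P$ instead of fixing one.
\begin{enumerate}
\item Let $\Gamma_1=P\U_{n-1}\cap V(F)(\Fqt)\setminus\{P\}$, and let $\Gamma_2$ be the set of the $q^{2n}$ $\Fqt$-hyperplanes not through $P$.
\item A point $Q\neq P$ lies on exactly $|\PP^{n-1}(\Fqt)|-|\PP^{n-2}(\Fqt)|=q^{2n-2}$ members of $\Gamma_2$: all hyperplanes through $Q$, minus those containing the line $PQ$.
\item For each $\Sigma\in\Gamma_2$, the section $\Sigma\cap P\U_{n-1}$ is a non-degenerate $\U_{n-1}$ by Corollary~\ref{hyp cup}. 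Since $\Sigma\not\subset V(F)$, the section $\Sigma\cap V(F)$ is a degree-$d$ hypersurface in $\Sigma$. Hence $|\Gamma_1\cap\Sigma|\le M_{n-1}(d)$.
\item Double counting incidences gives $q^{2n-2}|\Gamma_1|\le q^{2n}M_{n-1}(d)$, i.e.\ $|P\U_{n-1}\cap V(F)(\Fqt)|\le 1+q^2M_{n-1}(d)$.
\end{enumerate}
This treats your cone and non-cone subcases at once. After Case~1, the only split needed is whether $V(F)$ contains an $\Fqt$-hyperplane missing $P$, and the decomposition $F=\sum_i x_n^{d-i}F_i$ becomes unnecessary.
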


\begin{proof}
The proof is divided into two cases, according as $P\in V(F)$ or $P\notin V(F)$.

    \textbf{Case 1: $P\notin V(F)$.} It follows from Lemma \ref{missing P} that $$|P\U_{n-1}\cap V(F)(\Fqt)|< |\U_{n-1}(\Fqt)|+(d-1)(q+1)q^{2n-4}.$$

    \textbf{Case 2: $P\in V(F)$.} We distinguish two subcases according to whether the hypersurface $V(F)$ contains an $\Fqt$-hyperplane that does not pass through the point $P$.

    \textit{Subcase 2a.} \textit{$V(F)$ contains an $\Fqt$-hyperplane not passing through $P$.} Suppose $V(F)$ contains an $\Fqt$-hyperplane $\Sigma$ such that $P\notin \Sigma$. It then follows from Corollary \ref{hyp cup} that the hyperplane $\Sigma$ intersects the singular Hermitian variety $P\U_{n-1}$ at a non-degenerate Hermitian variety $\U_{n-1}$. Since $\Sigma\subset V(F)$, we have $P\U_{n-1}\cap V(F)\cap \Sigma =\U_{n-1}$. Consequently, 
    \begin{align*}
        |P\U_{n-1}\cap V(F)(\Fqt)| &= |P\U_{n-1}\cap V(F)\cap \Sigma(\Fqt)|+|P\U_{n-1}\cap V(F)\cap \Sigma^C(\Fqt)|\\
        & \leq |\U_{n-1}(\Fqt)|+(d-1)(q+1)q^{2n-4},
        \end{align*}
        where the last inequality follows from Lemma \ref{outside hyperplane}.

        \textit{Subcase 2b.} \textit{$V(F)$ does not contain any $\Fqt$-hyperplane not passing through $P$.} We now define the sets 
        \begin{align*}
            \Gamma_1 &:=P\U_{n-1}\cap V(F)(\Fqt)\setminus \left\{P\right\} \ \text{and}\\
            \Gamma_2 &:= \left\{\Sigma: \Sigma \ \text{is an $\Fqt$-hyperplane},\ P\notin \Sigma \right\}.
        \end{align*}
Since there are $|\PP^{n-1}(\Fqt)|$ many $\Fqt$-hyperplanes through the point $P$ in $\PP^n$, it follows that $|\Gamma_2|=q^{2n}$. Consider the incidence set $$\I:=\left\{\left(Q,\Sigma\right)\in \Gamma_1\times \Gamma_2 \ | \ Q\in \Sigma  \right\}.$$ For a chosen point $Q\in \Gamma_1$ we see that 
\begin{align}
\#\left\{ \Sigma\in \Gamma_2: Q\in \Sigma \right\} &= \#\left\{ \Sigma\ |\  Q\in \Sigma \right\}-\#\left\{ \Sigma\ |\ \text{the line}\  PQ\subset \Sigma   \right\}\notag\\
         &=|\PP^{n-1}\left(\Fqt\right)|- |\PP^{n-2}\left(\Fqt\right)|\notag \\
         &=q^{2n-2}\label{count hyp}.
\end{align}
We will count $|\I|$ in two ways. First, we observe that, 
\begin{equation}\label{first count}
  |\I|=\sum_{Q\in \Gamma_1}\#\left\{ \Sigma\in\Gamma_2: Q\in \Sigma \right\}=|\Gamma_1|q^{2n-2},  
\end{equation}
where the last equality follows from \eqref{count hyp}. On the other hand, since $P\notin \Sigma$ for any hyperplane in $\Gamma_2$, it follows that $\Sigma\cap P\U_{n-1}$ is a non-degenerate Hermitian variety $\U_{n-1}$ in $\PP^{n-1}$. Moreover, since by assumption $V(F)$ does not contain any hyperplane $\Sigma$ of $\Gamma_2$, the intersection $V(F)\cap \Sigma$ is an $\Fqt$-hypersurface of degree $d$ in $\Sigma\cong\PP^{n-1}$. Consequently, we count that 
\begin{align}
    |\I|&=\sum_{\Sigma\in \Gamma_2}\#\{ Q\in \Gamma_1: Q\in \Sigma \}\notag\\
        &= \sum_{\Sigma\in \Gamma_2} |\Sigma\cap P\U_{n-1}\cap V(F)\left(\Fqt\right)| \notag\\
        &=\sum_{\Sigma\in \Gamma_2}|\U_{n-1}\left(\Fqt\right)\cap \left(\Sigma\cap V(F)  \right)|\notag\\
        &\leq |\Gamma_2| M_{n-1}(d)=q^{2n}M_{n-1}(d).\label{second count}
\end{align}
Thus it follows from comparing \eqref{first count} and \eqref{second count} that $|\Gamma_1|\leq q^2 M_{n-1}(d)$, which implies that $$|P\U_{n-1}\cap V(F)(\Fqt)|\leq 1+q^2M_{n-1}(d).$$ Now the theorem follows from the two cases.
\end{proof}
%%%%%%%%%%%%%%%%%%%%%%%%%%%%%%%%%%%%%%%%%%%%%%
\subsection{The case $n=2$}\label{n=2}
\begin{theorem}\label{2-main-theorem}
 If $V(F)$ is an $\Fqt$-curve of degree $d\leq q$ in $\PP^2$, then $$|P\U_1\cap V(F)(\Fqt)|\leq dq^2+1.$$ Moreover, the bound is attained if and only if $V(F)$ is union of $d$ lines, each contained in the curve $P\U_1$. 
\end{theorem}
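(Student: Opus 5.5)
For $n=2$ the variety $P\U_1$ is the zero set of $x_0^{q+1}+x_1^{q+1}$ in $\PP^2$. Over $\Fqt$ this polynomial factors as $\prod_{\lambda}(x_0-\lambda x_1)$, where $\lambda$ runs over the $q+1$ elements of $\Fqt$ with $\lambda^{q+1}=-1$. So $P\U_1$ is the union of $q+1$ distinct $\Fqt$-lines through $P=[0:0:1]$, and $|P\U_1(\Fqt)|=1+(q+1)q^2$. The argument will be a direct line-by-line count rather than an application of Theorem \ref{rank n}, which requires $n\ge 3$.

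\textbf{Upper bound.} Write $L_1,\dots,L_{q+1}$ for these lines. Let $k$ be the number of them contained in $V(F)$, so that $F$ is divisible by the corresponding $k$ linear forms and $k\le d$.

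If $P\notin V(F)$, then $k=0$. Each $L_i$ meets $V(F)$ in at most $d$ points, and the lines are disjoint away from $P$, so the intersection has at most $d(q+1)$ points. This is less than $dq^2+1$ since $q\ge 2$.

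If $P\in V(F)$, each line $L_i\not\subset V(F)$ meets $V(F)$ in at most $d$ points, one of which is $P$. Hence such a line contributes at most $d-1$ points other than $P$. A line $L_i\subset V(F)$ contributes $q^2$ points other than $P$. Therefore
$$|P\U_1\cap V(F)(\Fqt)|\le 1+kq^2+(q+1-k)(d-1).$$
Subtracting this from $dq^2+1$ gives
$$(d-k)q^2-(q+1-k)(d-1).$$
For $k=d$ this equals $-(q+1-d)(d-1)\le 0$, so it needs care. For $k=d$ we have $F=\prod_{j=1}^d \ell_j$ with each $\ell_j$ defining a line of $P\U_1$, so $V(F)$ is exactly those $d$ lines. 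The other lines meet $V(F)$ only at $P$, and the count is exactly $1+dq^2$. So the $(d-1)$ per-line estimate is far from sharp here; the real point is that the residual curve $V(F/\prod\ell_j)$ has degree $d-k$.

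\textbf{Refined bound.} I will therefore use the residual curve $G=F/\prod_{j=1}^k\ell_j$, of degree $d-k$. A line $L_i\not\subset V(F)$ meets $V(F)$ only where it meets $V(G)$, plus $P$. So it contributes at most $d-k$ points other than $P$, and only $d-k-1$ if $P\in V(G)$. This gives
$$|P\U_1\cap V(F)(\Fqt)|\le 1+kq^2+(q+1-k)(d-k).$$
The difference from $dq^2+1$ is now $(d-k)\bigl(q^2-(q+1-k)\bigr)$, which is $\ge 0$ and vanishes iff $k=d$, since $q^2>q+1$ for $q\ge 2$. The case $P\notin V(F)$ is the $k=0$ instance of this bound, and there equality is impossible.

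\textbf{Equality case.} Equality forces $k=d$, that is, $V(F)$ is the union of $d$ lines of $P\U_1$. Conversely, any such union has exactly $dq^2+1$ points in common with $P\U_1$, by the computation above. Equivalently, one can note that $dq^2+1$ is Serre's bound (Theorem \ref{serre}) and use its equality clause: $V(F)$ is then $d$ concurrent lines, and each must lie in $P\U_1$. The main obstacle is simply choosing the residual-degree estimate rather than the naive per-line $d$ bound.
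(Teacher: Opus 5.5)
Your proof is correct, but your main argument takes a different route from the paper. The paper simply invokes Serre's inequality for the curve $V(F)$ alone: $|P\U_1\cap V(F)(\Fqt)|\le |V(F)(\Fqt)|\le dq^2+1$. In the case of equality, every $\Fqt$-point of $V(F)$ lies on $P\U_1$ and $|V(F)(\Fqt)|=dq^2+1$. The equality clause of Theorem \ref{serre} then makes $V(F)$ a union of $d$ concurrent $\Fqt$-lines. Each of these lines must be a line of the cone, since any other line meets $P\U_1$ in at most $q+1<q^2+1$ rational points. This is essentially the alternative you sketch in your last paragraph.

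Your main argument uses the explicit splitting of $x_0^{q+1}+x_1^{q+1}$ into $q+1$ lines meeting only at $P$. Beyond that, it needs only the fact that a nonzero binary form of degree $e$ has at most $e$ zeros on a line, applied to the residual form $G$ of degree $d-k$.

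What this buys you:
\begin{itemize}
\item The proof is self-contained. It does not depend on the nontrivial equality case of the Serre--S{\o}rensen theorem.
\item You get a finer statement. The bound $1+kq^2+(q+1-k)(d-k)$, with deficit $(d-k)(q^2-q-1+k)$, sorts curves by the number $k$ of distinct cone lines they contain. This says something about the next possible intersection sizes, and hence about further weights of $C_d(P\U_1)$.
\item It handles non-reduced $F$ (e.g.\ $F=\ell^d$) cleanly.
\item The hypothesis $d\le q$ plays no role.
\end{itemize}

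The paper's route is shorter but yields only the extremal value and its characterization.

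Your preliminary estimate with $d-1$ points per line is a dead end. It can be deleted, and you can go straight to the residual-curve bound.
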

\begin{proof}
    Since $P\U_1(\Fqt)$ is the union of $q+1$ many $\Fqt$ lines, each passing through the point $P$, the Theorem follows directly from Serre's inequality in Theorem \ref{serre}.
\end{proof}

\subsection{The case $n=3$}\label{n=3}In this subsection we restrict ourself to the case when $n=3$ and characterize the $\Fqt$-surfaces that shares the maximum number of $\Fqt$-rational points with the rank $3$ degenerate Hermitian surface $P\U_2$. To begin, we establish an optimal upper bound for this maximum. Although the following theorem first appeared in \cite[Theorem 3.3]{DM}, we here classify the surfaces attaining the maximum number of $\Fqt$-rational points common to $P\U_2$. The bound stated in the following theorem is attained; see \cite[Remark 3.4]{DM}.

%%%%%%%%%%%%%%%%%%%%%%%%%%%%%%%%%%%%%%%%%%%%%%
\begin{theorem}
\label{surface and rank n}
If $V(F)$ is an $\Fqt$-surface of degree $d\leq q$ in $\PP^3$, then $$|P\U_2\cap V(F)(\Fqt)|\leq 1+q^2d(q+1).$$   
\end{theorem}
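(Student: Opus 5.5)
We apply Theorem \ref{rank n} with $n=3$. Then the bound is the larger of $|\U_2(\Fqt)|+(d-1)(q+1)q^{2}$ and $1+q^2M_2(d)$. Here $M_2(d)$ is the maximum number of $\Fqt$-points shared by a non-degenerate Hermitian curve $\U_2\subset\PP^2$ and a plane curve of degree $d\le q$. It therefore suffices to show that both quantities are at most $1+q^2d(q+1)$.

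\textbf{The first quantity.} By Theorem \ref{nondeg point}, $|\U_2(\Fqt)|=q^3+1$. Hence
\[
|\U_2(\Fqt)|+(d-1)(q+1)q^2=q^3+1+(d-1)(q^3+q^2)=1+dq^2(q+1)-q^2,
\]
which is strictly smaller than $1+q^2d(q+1)$.

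\textbf{The second quantity.} We need $M_2(d)\le d(q+1)$. Let $V(G)$ be a plane curve of degree $d\le q$. The Hermitian curve $\U_2=V(x_0^{q+1}+x_1^{q+1}+x_2^{q+1})$ is absolutely irreducible of degree $q+1>d$. Hence $G$ and the Hermitian polynomial have no common factor, and Proposition \ref{coprime} shows that $V(G)\cap\U_2$ is a finite set of degree at most $d(q+1)$. A zero-dimensional scheme of degree at most $d(q+1)$ has at most $d(q+1)$ points; equivalently, apply Proposition \ref{lac} with $\delta=0$. This gives $|\U_2\cap V(G)(\Fqt)|\le d(q+1)$, so $M_2(d)\le d(q+1)$, and therefore $1+q^2M_2(d)\le 1+q^2d(q+1)$.

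Combining the two estimates gives the stated bound. There is no serious obstacle. The only point needing care is the absolute irreducibility of $\U_2$, which the paper records for Hermitian varieties of rank at least three.

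For the later characterization of equality, note what the argument already shows. The first quantity is strictly below the bound, so equality forces Subcase 2b of the proof of Theorem \ref{rank n}, with $P\in V(F)$. It also forces every plane $\Sigma$ not through $P$ to meet $V(F)\cap P\U_2$ in exactly $d(q+1)$ points. This points toward $V(F)$ being a union of $d$ planes through $P$, each meeting $P\U_2$ in $q+1$ lines through $P$, that is, planes through $P$ whose trace on the base $\U_2$ is a secant line. That analysis is not needed for the inequality itself.
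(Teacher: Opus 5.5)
Your proof is correct and follows the paper's own argument. You apply Theorem~\ref{rank n} with $n=3$, bound $M_2(d)\le d(q+1)$ by B\'ezout using the absolute irreducibility of $\U_2$, and check that $|\U_2(\Fqt)|+(d-1)(q+1)q^2=dq^3+(d-1)q^2+1$ is smaller; comparing directly with $1+q^2d(q+1)$ even spares you the attainment $M_2(d)=d(q+1)$ that the paper invokes. Your closing heuristic about equality is too narrow, though: by Theorem~\ref{cofi of V(F)}, equality holds for any cone with vertex $P$ over a plane curve $\C$ with $|\C\cap P\U_2(\Fqt)|=d(q+1)$, not only for unions of planes through $P$ with secant traces.
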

\begin{proof}
 Since $\U_2$ is an absolutely irreducible curve and $d\leq q$, it follows from B\'{e}zout's theorem that $$M_2(d)=\max_{V(G)}|\U_2\cap V(G)(\Fqt)|=d(q+1),$$ where the maximum is taken over all the plane curves $V(G)$ defined over $\Fqt$ of degree $d\leq q$. It is worth noting that there exist $\Fqt$-curves of degree $d$ that share exactly $d(q+1)$ many $\Fqt$-rational points with the non-degenerate Hermitian curve $\U_2$. For instance, if $\C$ is a curve consisting of $d$  distinct concurrent secant $\Fqt$-lines passing through a point of $\PP^2(\Fqt) \setminus \U_2$, then each of these lines meets the Hermitian curve $\U_2$ in exactly $q+1$ many $\Fqt$-rational points, as mentioned in Lemma \ref{line}. Consequently, $\C$ intersects $\U_2$ in precisely $d(q+1)$ many such points. Although producing such examples is easy, it appears that the problem of characterizing all the $\Fqt$-curves that attain exactly $d(q+1)$ many $\Fqt$-rational points common to $\U_2$ is a challenging one and still unresolved. To this end, we refer to \cite{BDMN} for such examples of curves. 
 
 Now since $|\U_2(\Fqt)|+(d-1)(q+1)q^2=dq^3+(d-1)q^2+1<1+q^2M_2(d)$, we conclude from Theorem \ref{rank n} that $$|P\U_2\cap V(F)(\Fqt)|\leq 1+q^2d(q+1).$$ 
\end{proof}
%%%%%%%%%%%%%%%%%%%%%%%%%%%%%%%%%%%%%%%%%%%%%%%%%%
\begin{lemma}\label{line confi} If $|P\U_2\cap V(F)(\Fqt)|=1+q^2d(q+1)$ then any $\Fqt$-line $\ell\subset P\U_2$ satisfies precisely one of the following:
   \begin{enumerate}
       \item[(a)] $\ell\subset V(F)$,
       \item [(b)] $\ell\cap V(F)(\Fqt)=\{P\}$.
   \end{enumerate}
\end{lemma}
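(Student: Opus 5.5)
The plan is to use the equality case of the proof of Theorem \ref{rank n}, and then a local tangency argument in a suitably chosen plane section. First note that $P\U_2(\Fqt)$ is the union of the $q^3+1$ generators $PR$ with $R\in\U_2(\Fqt)$. The curve $\U_2$ contains no line, so every $\Fqt$-line $\ell\subset P\U_2$ is such a generator and passes through $P$.

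\textbf{Step 1: locate the equality case.} Assume $|P\U_2\cap V(F)(\Fqt)|=1+q^2d(q+1)$. The proof of Theorem \ref{surface and rank n} shows the competing bound $dq^3+(d-1)q^2+1$ is strictly smaller. Hence we cannot be in Case 1 of Theorem \ref{rank n} (strict inequality there), nor in Subcase 2a. So $P\in V(F)$, and $V(F)$ contains no $\Fqt$-plane avoiding $P$. Moreover, equality in \eqref{second count} forces every plane $\Sigma\in\Gamma_2$ to satisfy
\[
|\U_2\cap (\Sigma\cap V(F))(\Fqt)|=d(q+1).
\]
Here $C_\Sigma:=\Sigma\cap V(F)$ is a curve of degree $d$ not containing the irreducible Hermitian curve $\Sigma\cap P\U_2$. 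By B\'ezout, the intersection multiplicities sum to $d(q+1)$. Equality therefore means every intersection point of $C_\Sigma$ with $\Sigma\cap P\U_2$ is $\Fqt$-rational and \emph{transversal} (multiplicity one).

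\textbf{Step 2: reduce to a contradiction at an extra point.} Take a generator $\ell$ with $\ell\not\subset V(F)$. Since $P\in V(F)\cap\ell$, case (b) fails only if there is an $\Fqt$-point $Q\neq P$ with $Q\in\ell\cap V(F)$. The idea is to produce a plane $\Sigma\in\Gamma_2$ through $Q$ in which $C_\Sigma$ and $\Sigma\cap P\U_2$ meet non-transversally at $Q$, contradicting Step 1. Now $Q\neq P$, so $Q$ is a smooth point of $P\U_2$, and its tangent plane $T:=T_QP\U_2$ is an $\Fqt$-plane containing $\ell$.
\begin{itemize}
\item If $Q$ is singular on $V(F)$, or $T_QV(F)=T$, then any of the $q^4$ planes $\Sigma\in\Gamma_2$ through $Q$ works. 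In each such plane, $C_\Sigma$ is singular at $Q$ or tangent there to $\Sigma\cap P\U_2$.
\item Otherwise $m:=T\cap T_QV(F)$ is an $\Fqt$-line through $Q$. If $m\neq\ell$, then $P\notin m$. Among the $q^2+1$ planes through $m$, only one contains $P$, so we may pick $\Sigma\supset m$ with $P\notin\Sigma$. In $\Sigma$ both curves have tangent line $m$ at $Q$, so their intersection multiplicity at $Q$ is at least $2$.
\end{itemize}
In either situation the number of distinct common points is less than $d(q+1)$, which contradicts Step 1.

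\textbf{Main obstacle: the case $m=\ell$.} This is when $V(F)$ is tangent to the generator $\ell$ at $Q$. Then every plane $\Sigma\in\Gamma_2$ through $Q$ cuts the two surfaces transversally at $Q$, so the local argument above gives nothing. I would first try a global count along $\ell$. Since $V(F)\cdot\ell=d$ and $Q$ now counts with multiplicity at least $2$, the line $\ell$ carries at most $d-2$ further points of $V(F)$ besides $P$. Write $a$ for the number of generators contained in $V(F)$, and $b_\ell\le d-1$ for the number of non-vertex points of $V(F)$ on a generator $\ell\not\subset V(F)$. Then
\[
aq^2+\sum_{\ell\not\subset V(F)} b_\ell=q^2d(q+1).
\]
I would combine this identity with the fact that every plane of $\Gamma_2$ meets $V(F)\cap P\U_2$ in exactly $d(q+1)$ points, for example by summing over the planes of $\Gamma_2$ through $Q$, to force $b_\ell=0$. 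If that fails, I would apply Theorem \ref{Datta} at $P$ to $V(F)$ to control $a$. In case (b) of that theorem, $V(F)$ contains a cone with vertex $P$ over a plane curve, and I would then try to show that $V(F)$ is a union of cones with vertex $P$, which immediately gives the dichotomy (a)/(b).
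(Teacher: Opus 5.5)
\textbf{There is a genuine gap.} Step 1 is correct, and so are the two subcases of Step 2 that you treat. But the proposal stops at the case that decides the lemma: $T_QV(F)\supset\ell$, i.e.\ $V(F)$ is tangent to the generator $\ell$ at $Q$. Your local method cannot handle this case, for a structural reason. A plane $\Sigma\in\Gamma_2$ through $Q$ avoids $P$, so it cannot contain $\ell=T\cap T_QV(F)$. Hence the tangent lines $\Sigma\cap T$ and $\Sigma\cap T_QV(F)$ of the two plane sections at $Q$ are distinct, and the intersection at $Q$ is transversal. No multiplicity argument inside planes of $\Gamma_2$ can contradict Step 1 here. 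What you offer for this case (a count along $\ell$, then possibly Theorem \ref{Datta}) is a list of things to try, not an argument, so as written the lemma is not proved. Theorem \ref{Datta} is not needed at all; the paper uses it only afterwards, in Theorem \ref{cofi of V(F)}.

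\textbf{The missing ingredient} is a global count that involves the plane through $P$ containing $\ell$. It also makes the tangency analysis unnecessary. The paper picks a plane $\Pi_0\ni Q$ with $P\notin\Pi_0$ and lets $m$ be the tangent line at $Q$ to the Hermitian curve $\Pi_0\cap P\U_2$. It then counts $P\U_2\cap V(F)(\Fqt)$ over the $q^2+1$ planes through $m$:
\begin{itemize}
\item the line $m$ meets $P\U_2(\Fqt)$ only in $Q$;
\item the only plane of this pencil through $P$ is $\langle m,\ell\rangle$, which is exactly your $T$. It meets $P\U_2(\Fqt)$ only in $\ell(\Fqt)$, so it contributes at most $d-1$ points besides $Q$;
\item each of the other $q^2$ planes contributes at most $d(q+1)-1$ points besides $Q$.
\end{itemize}
Hence $|P\U_2\cap V(F)(\Fqt)|\le 1+(d-1)+q^2\bigl(d(q+1)-1\bigr)<1+q^2d(q+1)$.

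Your own suggestion of summing over the $q^4$ planes of $\Gamma_2$ through $Q$ would also close the gap if carried out. Write $X:=P\U_2\cap V(F)(\Fqt)$. The point $Q$ lies in all of these planes, each $R\in X\setminus\ell$ lies in exactly $q^2$ of them, and the points of $\ell$ other than $Q$ lie in none. Step 1 then gives $q^4d(q+1)=q^4+q^2|X\setminus\ell|$. Hence $|X\cap\ell|=q^2+1>d$, so $\ell\subset V(F)$, whatever the position of $T_QV(F)$.
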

\begin{proof}
Since $|P\U_2\cap V(F)(\Fqt)|=1+q^2d(q+1)$ it follows from the proof of Theorem \ref{rank n} that $P\in V(F)$ and $V(F)$ does not contain any $\Fqt$-plane not passing through $P$. Suppose that $\ell$ is an $\Fqt$-line contained in $P\U_2$ such that $\ell\not\subset V(F)$, and let $Q\in \ell\cap V(F)(\Fqt)$ with $Q\neq P$. Let $\Pi_0$ be an $\Fqt$-plane such that $Q\in \Pi_0$ but $P\notin \Pi_0$. Consequently, corollary \ref{hyp cup} entails that the plane $\Pi_0$ intersects the cone $P\U_2$ at a non-singular Hermitian curve $\U_2$. Let $m$ be the tangent line to $\U_2\subset \Pi_0$ at the point $Q$. Evidently, $m\cap P\U_2\cap V(F)(\Fqt)=\{Q\}$. We now define the book of $\Fqt$-planes around the line $m$ as $$\B(m):=\left\{\Pi: \Pi \ \text{is an}\ \Fqt\text{-plane and} \ m\subset \Pi  \right\}.$$ Note that $|\B(m)|=q^2+1$ and $\Pi_0\in \B(m)$. Let $\Pi_1:= \langle m,\ell\rangle$ be the plane defined by the span of two lines $m$ and $\ell$. 

\textbf{Claim:} $\Pi_1\cap P\U_2(\Fqt)=\ell(\Fqt)$.

\textit{Proof of claim:} First we observe that $\ell(\Fqt)\subseteq \Pi_1\cap P\U_2(\Fqt)$. Let $R\in \Pi_1\cap P\U_2(\Fqt)\setminus \ell $. Therefore the line $\ell':=\langle P,R\rangle$ is contained in $\Pi_1\cap P\U_2$. If $\ell'$ meets $\Pi_0$ at $Q'$, the line $m$ must passes through the point $Q'$, which implies that $m$ is not a tangent to $\U_2$. This proves our claim.

Let $\Pi_2,\dots,\Pi_{q^2}$ denote the remaining planes of $\mathcal{B}(m)$ besides $\Pi_0$ and $\Pi_1$.
Since $\Pi_1$ is the unique plane of $\B(m)$ passing through $P$, we have $\Pi_i\cap P\U_2=\U_2$ for all $i=0,2,\dots,q^2$. Additionally, because $V(F)$ contains no plane that does not pass through $P$, the intersection $V(F)\cap \Pi_i$ is a plane curve of degree $d$, for all $i=0,2,\dots,q^2$. Thus B\'{e}zout's theorem entails that $$|\Pi_i\cap P\U_2\cap V(F)(\Fqt)|\leq d(q+1) \ \text{for}\ i=0,2,\dots, q^2.$$ Furthermore, since $\ell \not\subset V(F)$ we have $|\Pi_1\cap P\U_2\cap V(F)(\Fqt)|=|\ell\cap V(F)(\Fqt)|\leq d$. Hence, we conclude that 
\begin{align*}
    |P\U_2\cap V(F)(\Fqt)|&= |m\cap P\U_2\cap V(F)(\Fqt)|+\sum_{i=0}^{q^2}|\Pi_i\cap P\U_2\cap V(F)\setminus m (\Fqt)|\\
    &=|\{Q\}|+|\Pi_1\cap P\U_2\cap V(F)\setminus m(\Fqt)|+\sum_{\substack{i=0 \\ i\neq 1}}^{q^2}|\Pi_i\cap P\U_2\cap V(F)\setminus m (\Fqt)| \\
    &\leq 1+(d-1)+q^2\left(d\left(q+1\right)-1\right)\\
    &=q^2d(q+1)-(q^2-d)\\
    &<1+q^2d(q+1),
\end{align*}
a contradiction to our hypothesis.
\end{proof}

%%%%%%%%%%%%%%%%%%%%%%%%%%%%%%%%%%%%%%%%%%%%%%%    
\begin{theorem}\label{attain con line}
  $|P\U_2(\Fqt)\cap V(F)(\Fqt)|=1+q^2d(q+1)$ if and only if $P\U_2\cap V(F)(\Fqt)$ is union of $d(q+1)$ many $\Fqt$-lines of $P\U_2$.    
\end{theorem}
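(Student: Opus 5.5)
The equivalence is mostly a counting exercise built on Lemma \ref{line confi}, and I will prove the two directions separately.

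\emph{Reverse direction.} Suppose $P\U_2\cap V(F)(\Fqt)$ is the union of $d(q+1)$ distinct $\Fqt$-lines of $P\U_2$. Every $\Fqt$-line contained in $P\U_2$ passes through the vertex $P$. This holds because a line of the cone not through $P$ would project from $P$ onto a line contained in the base $\U_2$, and $\U_2$ is an irreducible curve of degree $q+1>1$, so it contains no line. Thus the $d(q+1)$ lines pairwise meet only at $P$. Each line has $q^2+1$ rational points, so the union has exactly $1+d(q+1)q^2$ rational points.

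\emph{Forward direction.} Assume $|P\U_2\cap V(F)(\Fqt)|=1+q^2d(q+1)$. As recorded in the proof of Lemma \ref{line confi}, $P\in V(F)$. The rational points of $P\U_2$ are partitioned as
\[
P\U_2(\Fqt)=\{P\}\ \sqcup \bigsqcup_{\ell}\bigl(\ell(\Fqt)\setminus\{P\}\bigr),
\]
where $\ell$ runs over the $q^3+1$ lines $PR$ with $R\in\U_2(\Fqt)$ in a base plane. By Lemma \ref{line confi}, each such $\ell$ contributes either all $q^2$ of its non-vertex points (when $\ell\subset V(F)$) or none of them. Let $k$ be the number of lines with $\ell\subset V(F)$. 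Then
\[
|P\U_2\cap V(F)(\Fqt)|=1+kq^2 .
\]
Comparing with the hypothesis gives $k=d(q+1)$. Moreover, $P\U_2\cap V(F)(\Fqt)$ is exactly the union of the rational points of these $k$ lines, since any point $Q\ne P$ of the intersection lies on the line $PQ\subset P\U_2$, and that line is forced into case (a) of Lemma \ref{line confi}.

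The only step needing care is the claim that every line of $P\U_2$ passes through $P$. It is used in both directions: to get the disjointness count in the reverse direction, and to ensure that Lemma \ref{line confi} applied to the lines through $P$ covers all points in the forward direction. Everything else is bookkeeping.
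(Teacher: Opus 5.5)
Your proof is correct and is essentially the paper's argument: the reverse direction is the direct count of $1+d(q+1)q^2$ points on lines that pairwise meet only at $P$, and the forward direction is read off from Lemma \ref{line confi}. You simply make explicit what the paper leaves implicit: that every $\Fqt$-line of $P\U_2$ passes through $P$, that $P\in V(F)$, and that $P\U_2(\Fqt)$ partitions into $\{P\}$ and the punctured lines through $P$.
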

\begin{proof}
  If $P\U_2\cap V(F)(\Fqt)$ is union of $d(q+1)$ lines of $P\U_2$, then $|P\U_2(\Fqt)\cap V(F)(\Fqt)|=1+q^2d(q+1)$. The other implication is an immediate consequence of the Lemma \ref{line confi}.  
\end{proof}
%%%%%%%%%%%%%%%%%%%%%%%%%%%%%%%%%%%%%%%%%%%%%%%
Now we classify the $\Fqt$-surfaces based on the bound presented in Theorem \ref{surface and rank n}. To this end, we first prove the following lemma, which will be imperative in proving Theorem \ref{cofi of V(F)}.
\begin{lemma}\label{plane cut}
 Let $\Pi$ be an $\Fqt$-plane passing through $P$. Then $\Pi\cap P\U_2(\Fqt)$ contains at most $q+1$ lines of the cone $P\U_2$.   
\end{lemma}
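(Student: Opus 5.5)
The plan is to reduce the statement to Lemma \ref{line} by projecting from $P$ onto a plane not through $P$. Fix an $\Fqt$-plane $\Pi_0$ with $P\notin\Pi_0$. By Corollary \ref{hyp cup}, $\Pi_0\cap P\U_2$ is a non-degenerate Hermitian curve $\U_2$ in $\Pi_0\cong\PP^2$. Every line of the cone $P\U_2$ through $P$ has the form $\langle P,R\rangle$ for a unique $R\in\U_2(\Fqt)$, and this gives a bijection between the $\Fqt$-lines of the cone and $\U_2(\Fqt)$. Indeed, an $\Fqt$-line through $P$ meets $\Pi_0$ in exactly one point, and that point is rational; the line lies on the cone exactly when this point lies on $\U_2$.

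Next, consider an $\Fqt$-plane $\Pi$ with $P\in\Pi$. Then $\Pi\ne\Pi_0$, so $\Pi\cap\Pi_0$ is an $\Fqt$-line $m$ in $\Pi_0$. If a line $\langle P,R\rangle$ of the cone lies in $\Pi$, then $R\in\Pi\cap\Pi_0=m$, so $R\in m\cap\U_2(\Fqt)$. Conversely, each $R\in m\cap \U_2(\Fqt)$ gives the line $\langle P,R\rangle\subset\Pi$. Hence the number of lines of the cone contained in $\Pi$ equals $|m\cap\U_2(\Fqt)|$.

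It remains to bound $|m\cap\U_2(\Fqt)|$. By Lemma \ref{line}, applied in $\Pi_0\cong\PP^2$ with $n=2$, the line $m$ is tangent, secant, or contained in $\U_2$. A non-degenerate Hermitian curve contains no line: for $q\ge2$, $x_0^{q+1}+x_1^{q+1}+x_2^{q+1}$ is absolutely irreducible of degree $q+1>1$. So $|m\cap\U_2(\Fqt)|\in\{1,q+1\}$, and $\Pi$ contains at most $q+1$ lines of the cone.

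There is no real obstacle here. The only point that needs an explicit sentence is ruling out case (c) of Lemma \ref{line}, and irreducibility of $\U_2$, noted after \eqref{reduceherm}, settles it.
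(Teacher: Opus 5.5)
Your proposal is correct and follows essentially the same route as the paper. Both arguments intersect $\Pi$ with an $\Fqt$-plane $\Pi_0$ not through $P$, use Corollary \ref{hyp cup} to get a non-degenerate Hermitian curve $\U_2$, and reduce to the tangent/secant dichotomy of Lemma \ref{line} for the line $\Pi\cap\Pi_0$, using that $\U_2$ contains no line. Your explicit bijection between cone lines in $\Pi$ and points of $m\cap\U_2(\Fqt)$ just makes precise what the paper asserts directly in its two cases.
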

\begin{proof}
 Let $\Pi_0$ be an $\Fqt$-plane such that $P\notin \Pi_0$ and let $\ell$ be the line of intersection of these two planes $\Pi$ and $\Pi_0$. Since $P\notin \Pi_0$, it follows that the intersection $P\U_2\cap \Pi_0$ is a nonsingular plane Hermitian curve $\U_2$ contained in the plane $\Pi_0$. Furthermore, since $\U_2$ contains no line, the line $\ell$ is either a tangent or a secant line of $\U_2$. Now if $\ell$ is tangent to $\U_2$ at an $\Fqt$-point $Q$, then $\Pi\cap P\U_2(\Fqt)$ is the line $PQ(\Fqt)$. On the other hand, if $\ell$ is secant to $\U_2$, then it intersects $\U_2$ at $q+1$ distinct $\Fqt$-rational points, say $\{Q_0,\dots, Q_q\}$. In this case $\Pi\cap P\U_2(\Fqt)$ is the union of $q+1$ many $\Fqt$-lines $PQ_i$, where $i=0,\dots,q$.  
\end{proof}
%%%%%%%%%%%%%%%%%%%%%%%%%%%%%%%%%%%%%%%%%%%%%%%%%%

\begin{theorem}\label{cofi of V(F)}
   Let $V(F)$ be an $\Fqt$-surface of degree $d\leq q$ in $\PP^3$. Then $|P\U_2\cap V(F)(\Fqt)|=1+q^2d(q+1)$ if and only if $V(F)$ is a cone over an $\Fqt$-curve $\C$ with vertex at the point $P$ such that $|\C\cap P\U_2(\Fqt)|=d(q+1)$.  
\end{theorem}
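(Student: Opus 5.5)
We prove the two implications separately; the reverse one is a direct count, and the forward one goes by strong induction on $d$. Throughout, $\Pi_0$ denotes a fixed $\Fqt$-plane with $P\notin\Pi_0$. By Corollary \ref{hyp cup}, $\Pi_0\cap P\U_2=\U_2$ is a non-degenerate Hermitian curve. We read ``a cone over $\C$'' with $\C\subset\Pi_0$, so that $|\C\cap P\U_2(\Fqt)|=|\C\cap\U_2(\Fqt)|$.

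\emph{Reverse implication.} Suppose $V(F)$ is the cone with vertex $P$ over $\C\subset\Pi_0$, and $|\C\cap\U_2(\Fqt)|=d(q+1)$. Every $\Fqt$-point $R\neq P$ of $P\U_2$ lies on a unique line $PR$. This line meets $\Pi_0$ in a point of $\U_2(\Fqt)$. Moreover $R\in V(F)$ if and only if that point lies on $\C$. Hence $P\U_2\cap V(F)(\Fqt)$ is the union of the $d(q+1)$ lines joining $P$ to the points of $\C\cap\U_2(\Fqt)$. Counting $q^2$ points on each line besides $P$ gives $1+q^2d(q+1)$.

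\emph{Forward implication.} Assume equality holds. By Theorem \ref{attain con line} (through Lemma \ref{line confi}), $V(F)$ contains $d(q+1)$ distinct $\Fqt$-lines of $P\U_2$, all through $P$. We induct on $d$. For $d=1$, $V(F)$ is a plane containing $q+1\ge 2$ lines through $P$, so it passes through $P$; it is therefore the cone over the line $V(F)\cap\Pi_0$, and that line is secant to $\U_2$ by Lemma \ref{line}. For general $d$, we apply Theorem \ref{Datta} to $Y=V(F)$ at the point $P$. Since $d\le q$, we have $d(q+1)>d(d-1)$, so option (c) is excluded. Thus $V(F)$ contains either an $\Fqt$-plane or an $\Fqt$-cone with vertex $P$ over a plane curve. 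In both cases we obtain a factorization $F=GH$, where $V(G)$ is a cone with vertex $P$ and $0<\deg G=e\le d$. In the plane case, the plane must pass through $P$, because by the proof of Theorem \ref{rank n} equality forces $V(F)$ to contain no $\Fqt$-plane missing $P$; a plane through $P$ is itself a cone with vertex $P$.

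\emph{Key counting step.} We show that $V(G)$ contains at most $e(q+1)$ of the $d(q+1)$ lines. Such lines correspond to points of $V(G)\cap\U_2(\Fqt)$ inside $\Pi_0$. Here $V(G)\cap\Pi_0$ is a plane curve of degree $e$, and $\U_2$ is absolutely irreducible of degree $q+1>e$, so Bézout's theorem bounds the number of these points by $e(q+1)$. When $e=1$, Lemma \ref{plane cut} gives the same bound directly.

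\emph{Applying the induction.} Every line of $P\U_2\cap V(F)$ not contained in $V(G)$ meets $V(G)$ only in finitely many points, hence lies in $V(H)$. So $V(H)$ contains at least $(d-e)(q+1)$ of these lines. If $e=d$ we are done, since then $V(F)=V(G)$ up to a constant. Otherwise $P\in V(H)$, and the lines give $|P\U_2\cap V(H)(\Fqt)|\ge 1+q^2(d-e)(q+1)$. Theorem \ref{surface and rank n} gives the reverse inequality, so equality holds for $V(H)$. By the induction hypothesis, $V(H)$ is a cone with vertex $P$. Therefore $V(F)=V(G)\cup V(H)$ is a cone with vertex $P$ over $\C:=V(F)\cap\Pi_0$. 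The reverse count above then forces $|\C\cap\U_2(\Fqt)|=d(q+1)$.

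The main obstacle is making the factorization in option (b) of Theorem \ref{Datta} rigorous. Choose coordinates with $P=[0:0:0:1]$. A cone with vertex $P$ is then defined by a form $G$ not involving $x_3$. Since that cone is contained in $V(F)$, and we may take $G$ irreducible or square-free, $G$ divides $F$. We also need $G$ defined over $\Fqt$, which holds because Theorem \ref{Datta} asserts the cone is defined over $\Fq$, i.e. over the base field $\Fqt$ here.
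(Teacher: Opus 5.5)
Your argument is correct, but it is organized differently from the paper's proof.

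\textbf{The paper's route.} The paper decomposes $V(F)$ in one stroke as a union of $\Fqt$-planes through $P$, cones with vertex $P$ over plane curves, and a residual surface $V(G)$ containing neither. It then bounds the $\Fqt$-lines of $P\U_2$ on each piece:
\begin{itemize}
\item Lemma~\ref{plane cut} for the planes;
\item B\'ezout for the cones;
\item for $V(G)$, no lines when $P\notin V(G)$, and at most $d_G(d_G-1)$ lines by option (c) of Theorem~\ref{Datta} when $P\in V(G)$.
\end{itemize}
This gives fewer than $d(q+1)$ lines unless $d_G=0$.

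\textbf{Your route.} You split off a single cone factor $G$, supplied by Theorem~\ref{Datta} applied to $V(F)$ itself. You then use the bound of Theorem~\ref{surface and rank n} in degree $d-e$ to show that the cofactor $V(H)$ is again extremal, and strong induction finishes.

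\textbf{Comparison.} You only ever invoke Theorem~\ref{Datta} to rule out option (c) for an extremal surface, and you need no case split on whether the residual passes through $P$. The price is re-running Lemma~\ref{line confi} and Theorem~\ref{surface and rank n} in every lower degree. That is harmless, since both hold for all degrees $\le q$. The paper's route is non-inductive and yields an explicit lower bound on the shortfall of lines whenever $d_G>0$.

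\textbf{Your choice of $\C$.} Taking $\C:=V(F)\cap\Pi_0$ is a genuine improvement. It makes the final count automatic via $|P\U_2\cap V(F)(\Fqt)|=1+q^2|\C\cap\U_2(\Fqt)|$. By contrast, the paper's $\C=(\bigcup_i\ell_i)\cup(\bigcup_j\widehat{\C_j})$ may have components in different planes. In that case $|\C\cap P\U_2(\Fqt)|$ can exceed the number of lines through $P$, and the paper does not check the value $d(q+1)$. Your normalization is also exactly what makes your reverse implication valid; the paper does not write that direction out.
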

\begin{proof}
   It follows from the Theorem \ref{attain con line} that, $P\U_2\cap V(F)(\Fqt)$ is union of $d(q+1)$ many $\Fqt$-lines. Furthermore, the \textit{Case 2} of Theorem \ref{rank n} implies that if the surface $V(F)$ contains an $\Fqt$-plane, then that plane must pass through the point $P$, otherwise $$|P\U_2\cap V(F)(\Fqt)|\leq dq^3+(d-1)q^2+1< 1+dq^2(q+1).$$ Now since $d(d-1)<d(q+1)$, we conclude from Theorem \ref{Datta} that either the surface $V(F)$ contains a cone over a plane curve defined over $\Fqt$ with center at $P$, or $V(F)$ contains a plane defined over $\Fqt$. Therefore, we can express $V(F)$ as \begin{equation}\label{expression}
   V(F)=\left(\bigcup_{i=1}^r \Pi_i\right)\cup \left(\bigcup_{j=1}^s P\widehat{\C_j}\right)\cup V(G),
   \end{equation}where each $\Pi_i$ is a plane defined over $\Fqt$ passing through $P$, each $P\widehat{\C_j}$ denotes a cone over a plane curve $\widehat{\C_j}$ with vertex at $P$ and $V(G)$ is an $\Fqt$-surface  that contains neither a cone over a plane curve with vertex at $P$ nor a plane defined over $\Fqt$. Let $\deg(\widehat{\C_j})=d_j$ and $\deg(G)=d_G$. Then from the expression \eqref{expression} of $V(F)$ we obtain \begin{equation}\label{deg inq} r+d_G+\sum_{j=1}^s d_j\leq d.    
   \end{equation}
   
   \textbf{Claim:} $d_G=0$.

  \textit{Proof of claim:} Suppose that $d_G\neq 0$. Since $V(G)$ contains no $\Fqt$-plane, it follows that $d_G>1$. Depending on $P\in V(G)$ or $P\notin V(G)$, we distinguish two cases.

  \textbf{Case 1:} $P\notin V(G)$. From lemma \ref{plane cut} we conclude that $$\#\left\{\ell: \ell \ \text{is an}\ \Fqt\text{-line}, \ell\subset P\U_2\cap \left(\bigcup_{i=1}^r\Pi_i\right)\right\}\leq r(q+1).$$ Let $\widehat{\Pi}_j$ be the plane of the curve $\widehat{\C_j}$. Since $P\notin \widehat{\Pi}_j$, therefore $P\U_2\cap\widehat{\Pi}_j$ is a non-degenerate Hermitian curve $\U_2$. Moreover, since $d_j\leq q$ and $\U_2$ is absolutely irreducible, there are no common components of $\widehat{\C_j}$ and $\U_2$. Consequently, by B\'{e}zout's theorem we have $|\widehat{\C_j}\cap \U_2(\Fqt)|\leq d_j(q+1)$ and this implies that there are at most $d_j(q+1)$ many $\Fqt$-lines of $P\widehat{\C_j}$ common to $P\U_2$. Thus $$\#\left\{\ell: \ell \ \text{is an}\ \Fqt\text{-line}, \ell\subset P\U_2\cap \left(\bigcup_{j=1}^s P\widehat{\C_j}\right)\right\}\leq (q+1)\sum_{j=1}^s d_j.$$ Now since $P\notin V(G)$, so the surface $V(G)$ contains no line of $P\U_2$. Consequently, 
  \begin{align*}
     \#\left\{\ell: \ell \ \text{is an}\ \Fqt\text{-line}, \ell\subset P\U_2\cap V(F)\right\} &\leq r(q+1)+(q+1)\sum_{j=1}^s d_j\\
     &< d(q+1),
  \end{align*}
 where the last inequality follows from the inequality \eqref{deg inq} together with the fact that $d_G>1$.

\textbf{ Case 2:} $P\in V(G)$. It follows from Theorem \ref{Datta} that the surface $V(G)$ contains at most $d_G(d_G-1)$ lines passing through the point $P$. In the same vein as Case 1, we conclude that
\begin{align*}
  \#\left\{\ell: \ell \ \text{is an}\ \Fqt\text{-line}, \ell\subset P\U_2\cap V(F)\right\} &\leq  r(q+1)+(q+1)\left(\sum_{j=1}^s d_j\right)+d_G(d_G-1)\\
  &= (q+1)\left(r+d_G+\sum_{j=1}^sd_j\right)-d_G(q-d_G+2)\\
  &\leq d(q+1)-d_G(q-d_G+2) \ \ \left(\text{using}\ \eqref{deg inq}\right)\\
  &<d(q+1),
\end{align*}
where the last inequality follows since $0<d_G\leq q$. 

Thus, in both cases, we arrive at a contradiction concerning the number of lines contained in $V(F)\cap P\U_2$. As a result $d_G=0$, which in turn implies that $$V(F)=\left(\bigcup_{i=1}^r \Pi_i\right)\cup \left(\bigcup_{j=1}^s P\widehat{\C_j}\right).$$ Now since each plane $\Pi_i$ through $P$ can be realized as a cone over a line $\ell_i\subset\Pi_i$ with vertex at $P$, where $P\notin \ell_i$, it follows that $V(F)$ is a cone over a curve $\C:=\Big(
\bigcup_{i=1}^r \ell_i
\Big)\cup\Big(\bigcup_{j=1}^s \widehat{\mathcal{C}_j}\Big)$ with vertex at $P$. This completes our proof.

\end{proof}

%%%%%%%%%%%%%%%%%%%%%%%%%%%%%%%%%%%%%%%%%

\subsection{The case $n=4$}\label{n=4} In this subsection, we study the intersection of a degenerate Hermitian threefold of rank $4$ and a threefold of degree $d\leq q$ and characterize the threefolds of degree at most $q-1$ based on their intersection number with the Hermitian threefold. Throughout, $V(F)$ will denote a threefold of degree $d\leq q$ and $P\U_3$ is the Hermitian threefold of rank $4$.
%%%%%%%%%%%%%%%%%%%%%%%%%%%%%%%%%%%%%%%%%%%%%%%
\begin{theorem}\label{degenerate and threefold} 
  We have $$|P\U_3\cap V(F)(\Fqt)|\leq 1+q^2\left(d(q^3+q^2-q)+q+1\right).$$   
\end{theorem}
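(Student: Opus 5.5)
The plan is to apply Theorem \ref{rank n} with $n=4$ and evaluate both quantities in the maximum. For the second term I will use Sørensen's bound (Theorem \ref{SoB}): for every $\Fqt$-surface $V(G)\subset\PP^3$ of degree $d\le q$ and the non-degenerate Hermitian surface $\U_3$,
\[
M_3(d)=\max_{V(G)}|\U_3\cap V(G)(\Fqt)|\le d(q^3+q^2-q)+q+1 .
\]
The equality case of Theorem \ref{SoB} shows this value is actually attained, although we only need the upper bound. The second entry of the maximum is therefore at most $1+q^2\left(d(q^3+q^2-q)+q+1\right)$.

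For the first entry, Theorem \ref{nondeg point} with $n=3$ gives $|\U_3(\Fqt)|=(q^3+1)(q^2+1)=q^5+q^3+q^2+1$. The first term is then
\[
q^5+q^3+q^2+1+(d-1)(q+1)q^4 .
\]
It remains to show this is at most $1+q^2\left(d(q^3+q^2-q)+q+1\right)=dq^5+dq^4-dq^3+q^3+q^2+1$.

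Expanding, the first term equals $q^5+q^3+q^2+1+(d-1)(q^5+q^4)=dq^5+(d-1)q^4+q^3+q^2+1$. The difference (second minus first) is therefore
\[
dq^4-dq^3-(d-1)q^4=q^4-dq^3=q^3(q-d)\ge 0,
\]
since $d\le q$. Hence the maximum in Theorem \ref{rank n} is the Sørensen-type term, and the stated bound follows.

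There is no real obstacle here. The only care needed is the arithmetic comparison, which reduces to $q^3(q-d)\ge0$; the two terms coincide exactly when $d=q$. Both branches of the maximum are bounded by the claimed quantity, so no case issue arises from $d=q$.
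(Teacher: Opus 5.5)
Your proposal is correct and matches the paper's proof: apply Theorem \ref{rank n} with $n=4$, bound $M_3(d)$ by S{\o}rensen's bound (Theorem \ref{SoB}), and check that the second term of the maximum minus the first equals $q^3(q-d)\ge 0$. Your observation that only the upper bound on $M_3(d)$ is needed is a harmless refinement of the paper's wording, which asserts equality.
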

\begin{proof}
  It follows from S{\o}rensen's bound in Theorem \ref{SoB} that $$M_3(d)=\max_{V(G)}|\U_3\cap V(G)(\Fqt)|=d(q^3+q^2-q)+q+1,$$ where the maximum is taken over all the $\Fqt$-surfaces $V(G)$ of degree $d\leq q$. Now, a straightforward computation yields 
  \begin{equation}\label{inq: threefold}
      \left(1+q^2M_3(d)\right)-\left(|\U_3(\Fqt)|+(d-1)(q+1)q^4 \right)= q^3(q-d)\geq 0.
  \end{equation}
  Thus, we conclude from Theorem \ref{rank n} that $$|P\U_3\cap V(F)(\Fqt)|\leq 1+q^2M_3(d).$$
\end{proof}
%%%%%%%%%%%%%%%%%%%%%%%%%%%%%%%%%%%%%%%%%%%%%%
\begin{remark}\label{when attained}
   \normalfont It is not difficult to construct an $\Fqt$-threefold of degree $d$ that intersects $P\U_3$ at exactly $1+q^2M_3(d)$ many $\Fqt$-rational points. Let $\Sigma$ be an $\Fqt$-hyperplane in $\PP^4$ such that $P\notin \Sigma$. Then it follows from corollary \ref{hyp cup} that the hyperplane $\Sigma$ intersects $P\U_3$ at a non-degenerate Hermitian surface $\U_3$. Take a surface $\s$ of degree $d$ in $\Sigma\cong \PP^3$ such that $|\s\cap \U_3(\Fqt)|=M_3(d)$. In fact, such a surface $\s$ is fully characterized by Theorem \ref{SoB}. It is now evident that the cone $P\s$, which is a threefold of degree $d$ defined over $\Fqt$, intersects $P\U_3$ at exactly $1+q^2M_3(d)$ many $\Fqt$-rational points. 
   
   Indeed, using the same construction above, one can construct an $\Fqt$-hypersurface $V(F)$ of degree $d\leq q$ in $\PP^n(\Fqt)$ with $n\geq 5$ such that $|P\U_{n-1}\cap V(F)(\Fqt)|=1+q^2M_{n-1}(d)$.
\end{remark}
%%%%%%%%%%%%%%%%%%%%%%%%%%%%%%%%%%%%%%%%%%%%%%%
We now present a characterization of threefolds of degree at most $q-1$ passing through the point $P$.
\begin{theorem}\label{weak condition}
  Let $V(F)$ be an $\Fqt$-threefold of degree $d\leq q-1$ with $P\in V(F)$. Then $|P\U_3\cap V(F)(\Fqt)|= 1+q^2M_3(d)$ if and only if for every $\Fqt$-hyperplane $\Sigma$, not passing through $P$, the surface $V(F)\cap \Sigma$ is union of $d$ tangent planes to $\U_3\subset \Sigma$ such that the tangent planes meet in a secant line of $\U_3$.
\end{theorem}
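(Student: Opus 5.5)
The plan is to run the double counting from Subcase 2b of Theorem \ref{rank n} again, now keeping track of when equality holds. Put $\Gamma_1 = P\U_3\cap V(F)(\Fqt)\setminus\{P\}$ and let $\Gamma_2$ be the set of $q^8$ hyperplanes $\Sigma$ of $\PP^4$ defined over $\Fqt$ with $P\notin\Sigma$. By \eqref{count hyp}, every point of $\Gamma_1$ lies in exactly $q^6$ hyperplanes of $\Gamma_2$. Double counting incidences therefore gives the exact identity
\[
q^6|\Gamma_1| = \sum_{\Sigma\in\Gamma_2} |\U_3\cap (V(F)\cap\Sigma)(\Fqt)|,
\]
where $\U_3=P\U_3\cap\Sigma$ by Corollary \ref{hyp cup}. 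This identity holds whether or not $V(F)$ contains a hyperplane of $\Gamma_2$.

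\textbf{The ``if'' direction.} Assume every section $V(F)\cap\Sigma$ with $\Sigma\in\Gamma_2$ is a union of $d$ tangent planes to $\U_3$ meeting in a secant line. In particular no $\Sigma\in\Gamma_2$ lies in $V(F)$, and every summand equals $M_3(d)$ by Theorem \ref{SoB}. The identity then gives $|\Gamma_1| = q^8M_3(d)/q^6 = q^2M_3(d)$. Since $P\in V(F)$, we obtain $|P\U_3\cap V(F)(\Fqt)| = 1+q^2M_3(d)$.

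\textbf{The ``only if'' direction.} First rule out the possibility that some $\Sigma\in\Gamma_2$ is contained in $V(F)$. In that case Subcase 2a of Theorem \ref{rank n} bounds the intersection by $|\U_3(\Fqt)|+(d-1)(q+1)q^4$. By \eqref{inq: threefold}, this falls short of $1+q^2M_3(d)$ by exactly $q^3(q-d)$, which is positive because $d\le q-1$. This is exactly where the hypothesis $d\le q-1$ is used rather than $d\le q$. So for every $\Sigma\in\Gamma_2$, the section $V(F)\cap\Sigma$ is a surface of degree $d$ in $\Sigma\cong\PP^3$, and Theorem \ref{SoB} bounds each summand by $M_3(d)$. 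Since $P\in V(F)$, the hypothesis gives $|\Gamma_1| = q^2M_3(d)$. The left-hand side of the identity is then $q^8M_3(d)$, which equals the maximum possible value of the right-hand side, so every summand must equal $M_3(d)$. The equality part of Theorem \ref{SoB} then forces each $V(F)\cap\Sigma$ to be a union of $d$ planes tangent to $\U_3$ that meet in a common secant line.

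\textbf{Expected difficulty.} I do not expect a serious obstacle; the only delicate point is the sub-case $\Sigma\subseteq V(F)$. One detail to check in the equality case of Theorem \ref{SoB}: the section $V(F)\cap\Sigma$ should be read as a surface of degree exactly $d$, possibly non-reduced. Equality in Sørensen's bound then forces it to consist of $d$ distinct planes, as that theorem asserts. We only need the characterization as stated, so this should cause no trouble.
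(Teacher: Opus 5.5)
Your proposal is correct and follows essentially the same route as the paper. The paper reduces the statement to the claim that equality holds if and only if every section $\Sigma\cap P\U_3\cap V(F)$ with $P\notin\Sigma$ has exactly $M_3(d)$ rational points. It proves that claim by the Subcase 2b double count, using $d\le q-1$ and \eqref{inq: threefold} to exclude hyperplanes off $P$ contained in $V(F)$, and then finishes with the equality case of Theorem \ref{SoB}. The only cosmetic difference is in the converse: the paper excludes $\Sigma\subseteq V(F)$ by noting $|\U_3(\Fqt)|>M_3(d)$, whereas you observe that a union of $d$ planes cannot be all of $\Sigma$.
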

\begin{proof}
 It follows from the Equation \eqref{inq: threefold} that if $d<q$ then $$|\U_3(\Fqt)|+(d-1)(q+1)q^4< 1+q^2M_3(d).$$
 
 \textbf{Claim:} \textit{$|P\U_3\cap V(F)(\Fqt)|= 1+q^2M_3(d)$ if and only if $|\Sigma\cap P\U_3\cap V(F)(\Fqt)|=M_3(d)$ for every $\Fqt$-hyperplane $\Sigma$, not passing through the point $P$.}

 \textit{Proof of claim:} Suppose that $|P\U_3\cap V(F)(\Fqt)|= 1+q^2M_3(d)$. First, we note from the proof of Theorem \ref{rank n} that this equality can occur only when $V(F)$ does not contain any hyperplane not passing through $P$ and $P\in V(F)$. If there exists an $\Fqt$-hyperplane $\Sigma'$ such that $|\Sigma'\cap P\U_3\cap V(F)(\Fqt)|<M_3(d)$ then it follows from inequality \eqref{second count} that $$|\I|<q^8M_3(d),$$ which leads to $|P\U_3\cap V(F)(\Fqt)|<1+q^2M_3(d)$, a contradiction.

 Conversely, suppose that $|\Sigma\cap P\U_3\cap V(F)(\Fqt)|=M_3(d)$ for every $\Fqt$-hyperplane $\Sigma$, not passing through the point $P$. Consequently, for each such hyperplane $\Sigma$ we have $\Sigma \not\subset V(F)$, otherwise $|\Sigma\cap P\U_3\cap V(F)(\Fqt)|=|\Sigma\cap P\U_3(\Fqt)|=|\U_3(\Fqt)|> M_3(d)$. Since $P\in V(F)$, from \textit{Subcase 2b} of the proof of theorem \ref{rank n} we conclude that $|P\U_3\cap V(F)(\Fqt)|=1+q^2M_3(d)$. This proves our claim. Now the Theorem follows from Theorem \ref{SoB}. 
    
\end{proof}

%%%%%%%%%%%%%%%%%%%%%%%%%%%%%%%%%%%%%%%%%%%%%%%%%
The above Theorem gives a characterization of threefolds passing through $P$. Next, we give another characterization for a threefold of degree at most $q-1$. To this end, we first prove several preliminary lemmas.

\begin{lemma}\label{hyperplane cut}
 Let $\Sigma$ be an $\Fqt$-hyperplane passing through the point $P$. Then $\Sigma\cap P\U_3(\Fqt)$ is either a cone over a non-degenerate Hermitian curve with vertex at $P$ or a cone over $q+1$ concurrent lines having vertex at $P$.   
\end{lemma}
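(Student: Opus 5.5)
The plan mirrors the proof of Lemma \ref{plane cut}, raised by one dimension. First choose an $\Fqt$-hyperplane $\Sigma_0$ of $\PP^4$ with $P\notin\Sigma_0$. By Corollary \ref{hyp cup}, $\Sigma_0\cap P\U_3$ is a non-degenerate Hermitian surface $\U_3\subset\Sigma_0\cong\PP^3$. Since $P\in\Sigma$ and $P\notin\Sigma_0$, we have $\Sigma\neq\Sigma_0$, so $\Pi:=\Sigma\cap\Sigma_0$ is an $\Fqt$-plane in $\Sigma_0$.

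The key structural step is that $\Sigma\cap P\U_3$ is the cone with vertex $P$ over $\Pi\cap\U_3$. For the inclusion one way: if $R\in\Sigma\cap P\U_3$ and $R\neq P$, then the line $PR$ lies in $P\U_3$ (it is a cone with vertex $P$) and in $\Sigma$ (both $P,R\in\Sigma$). This line meets $\Sigma_0$ in a point of $\Sigma\cap\Sigma_0\cap P\U_3=\Pi\cap\U_3$. Conversely, every line joining $P$ to a point of $\Pi\cap\U_3$ lies in both $\Sigma$ and $P\U_3$. All of this respects $\Fqt$-rationality, because $P$, $\Sigma$ and $\Sigma_0$ are defined over $\Fqt$.

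It then remains to identify the plane section $\Pi\cap\U_3$ using Theorem \ref{hyperplane section} with $n=3$.
\begin{enumerate}
\item[(a)] If $\Pi$ is not tangent to $\U_3$, then $\Pi\cap\U_3$ is a non-degenerate Hermitian curve $\U_2$. In this case $\Sigma\cap P\U_3(\Fqt)$ is a cone over a non-degenerate Hermitian curve with vertex $P$.
\item[(b)] If $\Pi$ is tangent to $\U_3$ at a point $R$, then $\Pi\cap\U_3$ is a cone with vertex $R$ over a non-degenerate $\U_1$ lying on a line of $\Pi$. Since $|\U_1(\Fqt)|=q+1$ by Theorem \ref{nondeg point}, this section is a union of $q+1$ concurrent $\Fqt$-lines through $R$. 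Coning from $P$ then gives a cone over $q+1$ concurrent lines with vertex at $P$.
\end{enumerate}

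I do not expect a real obstacle. The only care needed is in the cone identification: every line of $P\U_3$ through $P$ must meet $\Sigma_0$, which holds because a line and a hyperplane in $\PP^4$ always meet and $P\notin\Sigma_0$. We also need that the tangent/non-tangent dichotomy of Theorem \ref{hyperplane section} is exhaustive for $\Fqt$-planes, which it is.
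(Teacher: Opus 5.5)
Your proposal is correct and follows essentially the same route as the paper: choose $\Sigma_0$ with $P\notin\Sigma_0$, show that $\Sigma\cap P\U_3(\Fqt)$ is the cone with vertex $P$ over the plane section $\Pi\cap\U_3$ where $\Pi=\Sigma\cap\Sigma_0$, and then apply the tangent/non-tangent dichotomy of Theorem \ref{hyperplane section}. You also spell out two details the paper leaves implicit, namely the reverse inclusion of the cone identification and the count $|\U_1(\Fqt)|=q+1$ in the tangent case.
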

\begin{proof}
Let $\Sigma_0$ be a hyperplane defined over $\Fqt$ such that $P\notin \Sigma_0$. It follows from Corollary \ref{hyp cup} that $\Sigma_0$ and $P\U_3$ intersect in a non-degenerate Hermitian surface $\U_3$ in $\Sigma_0$. Let $\Pi$ be the plane of intersection of the two hyperplanes $\Sigma$ and $\Sigma_0$.

\textbf{Claim:} \textit{$\Sigma\cap P\U_3(\Fqt)$ is a cone over the plane curve $\Pi\cap\U_3(\Fqt)$.}

\textit{Proof of claim:} Suppose that $Q$ is an $\Fqt$-rational point of $\Sigma\cap P\U_3$ other than $P$. Then the line $PQ$, joining the points $P$ and $Q$, is contained in $P\U_3\cap \Sigma$ and $PQ$ intersects $\U_3\subset \Sigma_0$ at an $\Fqt$-point, say $R$. Consequently, $R\in \Sigma\cap\Sigma_0\cap\U_3=\Pi\cap\U_3$ and $Q$ lies on the line $PR$. This proves the claim.

 Since the plane $\Pi$ is either non-tangent or tangent to the Hermitian surface $\U_3$, it follows from Theorem \ref{hyperplane section} that $\Pi \cap \U_3$ is either a nonsingular Hermitian curve or a union of $q+1$ concurrent lines. The Lemma now follows from the claim.  
\end{proof}

%%%%%%%%%%%%%%%%%%%%%%%%%%%%%%%%%%%%%%%%%%%%%%%%%%

\begin{lemma}\label{small lem}
 Let $\Sigma_0$ be an $\Fqt$-hyperplane not passing through $P$ and $\Sigma_1$ is an $\Fqt$ hyperplane with $P\in \Sigma_1$, such that the plane $\Sigma_0\cap \Sigma_1$ is tangent to $\U_3\subset \Sigma_0$ at an $\Fqt$-point $R$. If the line $PR$ is not contained in the threefold $V(F)$ and $P\in V(F)$, then $$|\Sigma_1\cap P\U_3\cap V(F)(\Fqt)|\leq dq^3+dq^2+d.$$   
\end{lemma}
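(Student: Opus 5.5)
By Lemma \ref{hyperplane cut} and its proof, $\Sigma_1\cap P\U_3(\Fqt)$ is the cone with vertex $P$ over $\Pi\cap\U_3(\Fqt)$, where $\Pi=\Sigma_0\cap\Sigma_1$. Since $\Pi$ is tangent to $\U_3$ at $R$, Theorem \ref{hyperplane section}(a) shows that $\Pi\cap\U_3$ consists of $q+1$ concurrent $\Fqt$-lines $m_0,\dots,m_q$ through $R$. Hence $\Sigma_1\cap P\U_3(\Fqt)$ is the union of the $q+1$ planes $\Pi_i:=\langle P,m_i\rangle$, and these planes pairwise meet exactly in the line $PR$. The plan is to bound the number of points of $V(F)$ on each $\Pi_i$, treating the common line $PR$ separately.

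\textbf{Step 1: the line $PR$.} We have $PR\not\subset V(F)$ by hypothesis and $P\in V(F)$, so $PR\cap V(F)$ has at most $d$ rational points.

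\textbf{Step 2: the planes.} Fix a plane $\Pi_i$. If $\Pi_i\not\subset V(F)$, then $V(F)\cap\Pi_i$ is a plane curve of degree $d\le q$. By Serre's inequality (Theorem \ref{serre}) it has at most $dq^2+1$ rational points. Moreover, $\Pi_i$ cannot be contained in $V(F)$, because $PR\subset\Pi_i$ and $PR\not\subset V(F)$. So every $\Pi_i$ falls into this case. This bound is also what makes the final count come out right.

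\textbf{Step 3: counting.} We count the points on $PR$ once and the points of each $\Pi_i$ off $PR$. This gives
\[
|\Sigma_1\cap P\U_3\cap V(F)(\Fqt)|\le |PR\cap V(F)(\Fqt)|+\sum_{i=0}^{q}\bigl(|\Pi_i\cap V(F)(\Fqt)|-|PR\cap V(F)(\Fqt)|\bigr).
\]
Write $t=|PR\cap V(F)(\Fqt)|$, with $1\le t\le d$ since $P$ lies on $PR$ and in $V(F)$. The right side is at most $t+(q+1)(dq^2+1-t)=(q+1)(dq^2+1)-qt$. With only $t\ge1$ this is $dq^3+dq^2+1$, which is \emph{at most} the required $dq^3+dq^2+d$ because $d\ge1$. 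So the lemma follows; in fact the bound obtained is slightly stronger.

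\textbf{Main obstacle.} The delicate point is using the crude Serre bound $dq^2+1$ per plane while still accounting for the shared line. The bookkeeping must subtract the intersection with $PR$ from each of the $q+1$ planes, as in Step 3, rather than adding the plane bounds independently. Adding them independently gives $(q+1)(dq^2+1)$, which exceeds the target. If a sharper per-plane estimate were ever needed, I would instead restrict $V(F)$ to each $\Pi_i$ minus $PR$, which is an affine plane, and apply Proposition \ref{lac} to get at most $dq^2$ points there. Summing gives $d+(q+1)dq^2=dq^3+dq^2+d$, which is exactly the stated bound. This affine route is probably the intended proof and avoids the Serre equality analysis entirely.
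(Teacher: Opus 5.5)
Your proof is correct and follows the paper's argument. $\Sigma_1\cap P\U_3(\Fqt)$ is the union of the $q+1$ planes $\langle P,m_i\rangle$ meeting in $PR$; Serre's inequality bounds each plane by $dq^2+1$, since none lies in $V(F)$ because $PR\not\subset V(F)$; and $PR$ is counted only once. The one difference is bookkeeping: the paper bounds $|PR\cap V(F)(\Fqt)|\le d$ and separately uses $P\in V(F)$ to get at most $dq^2$ points off $PR$ in each plane, whereas your joint treatment of $t$ gives the marginally sharper bound $dq^3+dq^2+1$.
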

\begin{proof}
 Since $\Sigma_0\cap \Sigma_1$ is tangent to $\U_3$ at the point $R$, therefore $\Sigma_0\cap \Sigma_1\cap \U_3$ is union of $q+1$ lines, say $m_0,\dots, m_q$, each passing through $R$. Consequently, it follows from Lemma \ref{hyperplane cut} that $\Sigma_1\cap P\U_3(\Fqt)$ is cone over these $q+1$ lines passing through the point $R$, i.e., $$\Sigma_1\cap P\U_3(\Fqt)=\bigcup_{i=0}^q \langle P, m_i\rangle (\Fqt),$$ where $\langle P, m_i\rangle$ denotes the plane passing through $P$ and containing the line $m_i$. Furthermore, since $PR\not\subset V(F)$, none of the planes $\langle P,m_i \rangle$ is contained in $V(F)$. As a result, from Serre's inequality (cf. Theorem \ref{serre}), we have $|\langle P,m_i \rangle \cap V(F)(\Fqt)|\leq dq^2+1$. 
 Hence, we conclude that 
 \begin{align*}
 |\Sigma_1\cap P\U_3\cap V(F)(\Fqt)|&= |PR\cap V(F)(\Fqt)|+\sum_{i=0}^q |\langle P,m_i\rangle \cap V(F)\setminus PR(\Fqt)|\\
 &\leq d+(q+1)(dq^2+1-1) \ \left(\text{since}\ P\in V(F)(\Fqt)\right)\\
 &=dq^3+dq^2+d.
 \end{align*}
\end{proof}
%%%%%%%%%%%%%%%%%%%%%%%%%%%%%%%%%%%%%%%%%%%%%
\begin{lemma}\label{plane involved}
    Let $V(F)$ be an $\Fqt$-threefold of degree $d\leq q-1$ and $|P\U_3\cap V(F)(\Fqt)|=1+q^2M_3(d)$. If $\ell$ is an $\Fqt$-line such that $\ell\subset V(F)\cap \U_3$, then the plane $\Pi:=\langle P,\ell\rangle$ is contained in $V(F)$.
\end{lemma}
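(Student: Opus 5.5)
The plan is to reduce the statement to lines through $P$. Here $\ell$ lies in the non-degenerate Hermitian surface $\U_3=\Sigma_0\cap P\U_3$ cut out by some $\Fqt$-hyperplane $\Sigma_0$ with $P\notin\Sigma_0$. The plane $\Pi=\langle P,\ell\rangle$ is the union of the $q^2+1$ lines $PR$ with $R\in\ell(\Fqt)$. If every such line $PR$ lies in $V(F)$, then $|\Pi\cap V(F)(\Fqt)|=q^4+q^2+1>dq^2+1$. Serre's inequality (Theorem \ref{serre}) then forces $\Pi\subseteq V(F)$, since otherwise $\Pi\cap V(F)$ would be a plane curve of degree $d\le q$. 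So it suffices to fix $R\in\ell(\Fqt)$ and show $PR\subset V(F)$.

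Suppose for a contradiction that $PR\not\subset V(F)$. From the equality case in the proof of Theorem \ref{rank n} we know $P\in V(F)$. We also know $V(F)$ contains no $\Fqt$-hyperplane missing $P$. I would count $|P\U_3\cap V(F)(\Fqt)|$ through the pencil of $\Fqt$-hyperplanes $\Sigma_1\supset PR$. These correspond bijectively to the $q^4+q^2+1$ $\Fqt$-planes $\Sigma_1\cap\Sigma_0$ of $\Sigma_0$ through $R$. Each $\Fqt$-point of $\PP^4$ off $PR$ lies in exactly $q^2+1$ of these hyperplanes, so
\[
|P\U_3\cap V(F)(\Fqt)| = |PR\cap V(F)(\Fqt)| + \frac{1}{q^2+1}\sum_{\Sigma_1\supset PR}|\Sigma_1\cap P\U_3\cap V(F)\setminus PR\,(\Fqt)|.
\]
The line $PR$ contributes at most $d$ points.

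Exactly one plane through $R$, namely the tangent plane $T_R$ to $\U_3$ at $R$, is tangent to $\U_3$ at $R$, and $T_R\supset\ell$. For the corresponding $\Sigma_1$, Lemma \ref{small lem} gives the bound $dq^3+dq^2+d$. For a plane through $R$ tangent to $\U_3$ at some other point, the section is still $q+1$ concurrent lines. For a non-tangent plane the section is a Hermitian curve through $R$. In either case Lemma \ref{hyperplane cut} shows $\Sigma_1\cap P\U_3$ is a cone with vertex $P$, and I would bound it by the threefold analogue of Theorem \ref{surface and rank n}: at most $1+q^2d(q+1)$ points, minus the contribution forced by $PR\not\subset V(F)$. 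Specifically, the line $PR$ of the cone carries at most $d$ points, so one generator is ``lost''. For the other tangent-plane sections I would redo the computation of Lemma \ref{small lem} plane by plane, treating the generator $PR$ separately.

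Summing these estimates and comparing with $1+q^2M_3(d)=1+q^2\bigl(d(q^3+q^2-q)+q+1\bigr)$ should give a strict inequality, contradicting the hypothesis. This uses $d\le q-1$, which already gave the strict gap in \eqref{inq: threefold}. I expect this numerical comparison to be the main obstacle: the per-hyperplane bounds must be sharp enough that the deficit from the single generator $PR$, amplified over all $q^4+q^2+1$ hyperplanes of the pencil, beats the slack.

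If the crude counting does not close, I would switch to Theorem \ref{weak condition}. Every hyperplane $\Sigma$ with $P\notin\Sigma$ meets $V(F)$ in $d$ tangent planes to $\U_3^{\Sigma}$ through a common secant line. Apply this to $\Sigma$ containing $\ell$ but meeting $PR$ only at a point $R'\neq P$ with $R'\notin V(F)$. The line $\Sigma\cap\Pi\subset\U_3^\Sigma$ meets $V(F)$ only in few points, and comparing with the rigid structure of $d$ tangent planes, whose Hermitian sections are pencils of $q+1$ lines through the tangency points, would give the contradiction.
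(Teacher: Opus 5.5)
Your reduction to a single line $PR\not\subset V(F)$ with $R\in\ell(\Fqt)$ matches the paper's first step. The pencil count through $PR$, however, cannot be closed.

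\textbf{Tangent-type hyperplanes.} Among the $q^4+q^2+1$ hyperplanes through $PR$, there are $q^3+q^2$ whose trace on $\Sigma_0$ is the tangent plane to $\U_3$ at a point $R'\in\T_R(\U_3)\cap\U_3$ with $R'\neq R$. For such a $\Sigma_1$, the section $\Sigma_1\cap P\U_3$ is the union of $q+1$ planes through $PR'$, not a cone over a Hermitian curve. So Theorem~\ref{surface and rank n} does not apply. The surface $V(F)\cap\Sigma_1$ may contain up to $d$ of these planes (only $\langle P,RR'\rangle$ is excluded), giving about $dq^4$ points. Lemma~\ref{small lem} is also unavailable here, since nothing prevents $PR'\subset V(F)$.

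\textbf{No amplified deficit.} The line $PR$ lies in every hyperplane of the pencil, and its points are removed from your sum. Moreover, $PR\not\subset V(F)$ does not lower the bound $q^2d(q+1)$ on the points of a cone section off $PR$, because the other generators can still supply $d(q+1)$ full lines. In fact, even granting $1+q^2d(q+1)$ for every hyperplane of the pencil, your identity gives only about $dq^5+dq^4$. This exceeds $1+q^2M_3(d)=dq^5+dq^4-(d-1)q^3+q^2+1$ as soon as $d\ge 2$.

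\textbf{The fallback.} It does not repair this. A hyperplane containing $\ell$ meets $PR$ in $R\in\ell\subset V(F)$, so it cannot meet $PR$ only outside $V(F)$. If instead $\Sigma$ passes through a point of $PR\setminus V(F)$, the line $\Sigma\cap\Pi$ of $\U_3^\Sigma$ meets each of the $d$ tangent planes once. That is fully compatible with Theorem~\ref{weak condition}, so no contradiction has been identified.

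\textbf{The missing idea.} Use a decomposition in which only one hyperplane passes through $P$: the book of $q^2+1$ hyperplanes through the tangent plane $\T_R(\U_3)\subset\Sigma_0$, which contains $\ell$.
\begin{itemize}
\item Its unique member $\Sigma_1$ through $P$ is exactly the setting of Lemma~\ref{small lem}: tangency point $R$ with $PR\not\subset V(F)$. So it contributes at most $dq^3+dq^2+d$.
\item The other $q^2$ members miss $P$ and are not contained in $V(F)$. By Theorem~\ref{SoB} each meets $P\U_3\cap V(F)$ in at most $M_3(d)$ points.
\item At least $q^2+1$ of those points (the points of $\ell$) lie in $\T_R(\U_3)\subset\Sigma_1$ and are already counted.
\end{itemize}
Hence
\[
|P\U_3\cap V(F)(\Fqt)|\le dq^3+dq^2+d+q^2\bigl(M_3(d)-q^2-1\bigr)=dq^5+(d-1)q^4+q^3+dq^2+d .
\]
This falls short of $1+q^2M_3(d)$ by $q^3(q-d)-(d-1)(q^2+1)>0$, since $d\le q-1$. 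This is the paper's argument.
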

\begin{proof}
If all the $\Fqt$-rational points of $\Pi$ are contained in the threefold $V(F)$, then clearly $\Pi\subset V(F)$. In fact, a plane curve of degree $d\leq q$ contains at most $dq^2+1$ many $\Fqt$-rational points, due to Serre's inequality. Let $Q\in \Pi(\Fqt)$ such that $Q\notin V(F)$ and let $m$ be the line joining the points $P$ and $Q$. Suppose $m$ intersects $\ell$ at the point $R$. 

Let $\Sigma_0$ be an $\Fqt$-hyperplane passing through $\ell$ with $P\notin \Sigma_0$. We consider the tangent plane $\T_R(\U_3)$ to $\U_3\subset \Sigma_0$ at the point $R$. Let $\B\left(\T_R(\U_3)\right)$ denote the book of $\Fqt$-hyperplanes around the plane $\T_R(\U_3)$, defined by $$\B\left(\T_R(\U_3)\right):=\left\{ \Sigma: \Sigma \ \text{is an} \ \Fqt\text{-hyperplane and}\ \T_R(\U_3)\subset \Sigma\right\}.$$ Note that, $|\B\left(\T_R(\U_3)\right)|=q^2+1$ and $\Sigma_0\in \B\left(\T_R(\U_3)\right)$. Let $\Sigma_1$ be the hyperplane of $\B\left(\T_R(\U_3)\right)$ passing through $P$. Evidently, $PR\not \subset V(F)$, since $Q\notin V(F)$ and $\Sigma_0\cap\Sigma_1=\T_R(\U_3)$. Thus, it follows from Lemma \ref{small lem} that $$|\Sigma_1\cap P\U_3\cap V(F)(\Fqt)|\leq dq^3+dq^2+d.$$ Furthermore, since $|P\U_3\cap V(F)(\Fqt)|=1+q^2M_3(d)$, from Theorem \ref{rank n} (\textit{Subcase 2b}) we conclude that the threefold $V(F)$ does not contain any hyperplane not passing through $P$. Let $\Sigma_2,\dots,\Sigma_{q^2}$ be the other hyperplanes of $\B(\T_R(\U_3))$ distinct from $\Sigma_0,\Sigma_1$.  Consequently, S{\o}rensen's bound from Theorem \ref{SoB} implies that $$|\Sigma_i\cap P\U_3\cap V(F)(\Fqt)|\leq M_3(d),\ \text{for}\ i=0,2,\dots,q^2.$$  Additionally, since $\ell\subset \T_R(\U_3)\cap V(F)$, we have $$|\T_R(\U_3)\cap P\U_3\cap V(F)(\Fqt)|\geq q^2+1.$$ Thus, it follows that 
\begin{align*}
    |P\U_3\cap V(F)(\Fqt)|&=|\Sigma_1\cap P\U_3\cap V(F)(\Fqt)|+\sum_{\substack{i=0 \\ i\neq 1}}^{q^2}|\Sigma_i\cap P\U_3\cap V(F)\setminus \T_R(\U_3)(\Fqt)|\\
    &\leq dq^3+dq^2+d+q^2(M_3(d)-q^2-1)\\
    &= dq^5+(d-1)q^4+q^3+dq^2+d\\
    &< 1+q^2M_3(d)=dq^5+dq^4-(d-1)q^3+q^2+1,    
\end{align*} where the last inequality follows since $d\leq q-1$. This completes the proof of the Lemma.
\end{proof}

%%%%%%%%%%%%%%%%%%%%%%%%%%%%%%%%%%%%%%%%%%%%%%%%%%
We are now ready to prove the main Theorem of this subsection. This will classify all the $\Fqt$-threefolds of degree at most $q-1$ that share the maximum number of $\Fqt$-rational points with $P\U_3$. 

\begin{theorem}\label{main n=4}
 Let $V(F)$ be an $\Fqt$-threefold of degree $d\leq q-1$ in $\PP^4$. Then $|P\U_3(\Fqt)\cap V(F)(\Fqt)|=1+q^2M_3(d)$ if and only if $P\U_3\cap V(F)(\Fqt)$ is union of $M_3(d)$ many $\Fqt$-lines of the cone $P\U_3$, each passing through $P$.    
\end{theorem}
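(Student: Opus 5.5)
The "if" direction is a direct count. Suppose $P\U_3\cap V(F)(\Fqt)$ is a union of $M_3(d)$ lines of the cone through $P$. Distinct lines through $P$ meet only in $P$, and each carries $q^2$ rational points besides $P$. So the intersection has exactly $1+q^2M_3(d)$ rational points.

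For the converse, assume $|P\U_3\cap V(F)(\Fqt)|=1+q^2M_3(d)$. Since $d\leq q-1$, Equation \eqref{inq: threefold} gives strict inequality between the two terms of the maximum in Theorem \ref{rank n}. Tracing the cases of that proof, equality forces $P\in V(F)$: Case 1 and Subcase 2a both give bounds at most $|\U_3(\Fqt)|+(d-1)(q+1)q^4<1+q^2M_3(d)$. It also forces $V(F)$ to contain no $\Fqt$-hyperplane avoiding $P$.

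Theorem \ref{weak condition} then applies. For every $\Fqt$-hyperplane $\Sigma$ with $P\notin\Sigma$, the surface $V(F)\cap\Sigma$ is a union of $d$ planes tangent to $\U_3=\Sigma\cap P\U_3$ meeting in a secant line. By Theorem \ref{hyperplane section}(a), each tangent plane meets $\U_3$ in $q+1$ concurrent $\Fqt$-lines. Hence $\U_3\cap V(F)(\Fqt)$ is a union of $\Fqt$-lines lying in $V(F)\cap\U_3$.

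The key step is to show that every $\Fqt$-point $Q\neq P$ of $P\U_3\cap V(F)$ lies on a full line $PR\subset V(F)$. Fix such a $Q$. Choose a hyperplane $\Sigma_0$ through $Q$ with $P\notin\Sigma_0$; such a hyperplane exists since $\Fqt$-hyperplanes through $Q$ not containing the line $PQ$ are plentiful, by the count in \eqref{count hyp}. Then $Q\in \U_3\cap V(F)\subset\Sigma_0$, so by the above $Q$ lies on an $\Fqt$-line $\ell\subset V(F)\cap\U_3$. Lemma \ref{plane involved} then gives $\langle P,\ell\rangle\subset V(F)$, and in particular the line $PQ\subset V(F)$.

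Consequently $P\U_3\cap V(F)(\Fqt)$ is a union of cone lines through $P$, each contained in $V(F)$. Let $N$ be the number of such lines. Counting points gives $1+q^2N=1+q^2M_3(d)$, so $N=M_3(d)$.

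The main obstacle I expect is checking that the line $\ell$ produced by Theorem \ref{weak condition} really passes through the arbitrary point $Q$. This holds because $\U_3\cap V(F)\cap\Sigma_0$ is entirely covered by the lines coming from the tangent-plane sections. A second point to check is that the hypotheses of Lemma \ref{plane involved} are met. Its statement writes $\ell\subset V(F)\cap\U_3$ with $\U_3$ the base in a hyperplane avoiding $P$; this matches our choice of $\Sigma_0$, because the proof of that lemma lets one choose $\Sigma_0$ freely through $\ell$.
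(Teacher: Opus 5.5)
Your proof is correct and follows the paper's argument: force $P\in V(F)$, apply Theorem~\ref{weak condition} to get the tangent-plane structure of $V(F)\cap\Sigma$, and then use Lemma~\ref{plane involved} to lift each line of $\U_3\cap V(F)$ to a plane through $P$ inside $V(F)$. The only minor difference is that you choose a hyperplane $\Sigma_0$ through each point $Q$ to show directly that the intersection is a union of cone lines, whereas the paper fixes a single $\Sigma$, obtains $M_3(d)$ cone lines in $V(F)$, and lets the point count $1+q^2M_3(d)$ rule out any further points.
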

\begin{proof}
 If $P\U_3\cap V(F)(\Fqt)$ is union of $M_3(d)$ many concurrent $\Fqt$-lines then certainly $|P\U_3(\Fqt)\cap V(F)(\Fqt)|=1+q^2M_3(d)$. 
 
 Conversely, suppose that $|P\U_3(\Fqt)\cap V(F)(\Fqt)|=1+q^2M_3(d)$. If $P\notin V(F)$, then Case 1 of theorem \ref{rank n} entails that, $|P\U_3\cap V(F)(\Fqt)|<|\U_3(\Fqt)|+(d-1)(q+1)q^4<1+q^2M_3(d)$. Consequently, $P\in V(F)$. It now follows from Theorem \ref{weak condition} that if $\Sigma$ is an $\Fqt$-hyperplane that does not pass through $P$, then $$V(F)\cap \Sigma=\bigcup_{i=1}^d \Pi_i,$$ where each plane $\Pi_i$ is tangent to $\U_3\subset \Sigma$ at some point, say $R_i$ and the line $m:=\cap_{i=1}^d\Pi_i$ is secant to $\U_3$. Since $\Pi_i$ is tangent to $\U_3$ at $R_i$, the intersection $\Pi_i\cap \U_3$ is union of $q+1$ lines, say $\ell_{i0},\dots,\ell_{iq}$, each passing through $R_i$. Thus, from Lemma \ref{plane involved} we conclude that the plane $\langle P,\ell_{ij} \rangle$ is contained in $V(F)$, for every $j=0,\dots,q$. Thus, $V(F)$ contains all the lines of $P\U_3$ in the plane $\langle P,\ell_{ij} \rangle$, for all $i=1,\dots,d$ and $j=0,\dots,q$. Since $|V(F)\cap\Sigma\cap\U_3(\Fqt)|=M_3(d)$ the Theorem follows. 
\end{proof}
%%%%%%%%%%%%%%%%%%%%%%%%%%%%%%%%%%%%%%%%%%%%%%%%%
\subsection{The case $n\geq 5$}\label{bigger than 5}

It is evident from Theorem \ref{rank n} that determining an upper bound on the maximum number of $\Fqt$-rational points in the intersection of a hypersurface and a rank $n$ degenerate Hermitian variety in $\PP^n$ reduces to evaluating the quantity $M_{n-1}(d)$. Consequently, for the study of the case $n\geq 5$, one needs to evaluate $M_i(d)$ for all $i\geq 4$. However, as the dimension $n$ of the underlying projective space grows, determining $M_{n-1}(d)$ becomes increasingly difficult. To this end, in 2011, Edoukou, Ling and Xing proposed a conjecture in \cite{ELX} that can be reformulated as follows: 
\begin{conjecture}\cite[Conjecture 2 (ii)]{ELX}\label{ELXc}
   Let $n\geq 3$ and $\U_n$ be a non-degenerate Hermitian variety in $\PP^n(\Fqt)$. If $V(F)$ is a hypersurface of degree $d\leq q$, then 
   $$M_n(d)=\max_{V(F)} |V(F)(\Fqt)\cap \U_n|=\begin{cases}
       d|\U_{n-1}(\Fqt)|-(d-1)|\U_{n-2}(\Fqt)| \ \text{if} \ n \ \text{is even}\\
       (dq^2-d+1)|\U_{n-2}(\Fqt)|+d\ \text{if} \ n \ \text{is odd},
   \end{cases}$$ 
   and this maximum is attained if and only if the hypersurface $V(F)$ is union of $d$ distinct hyperplanes $\Sigma_1, \Sigma_2,\dots, \Sigma_d$ such that each of the hyperplanes contains a common $(n-2)$-dimensional linear space $\Pi_{n-2}$  and $\Pi_{n-2}\cap \U_n$ is a non-degenerate Hermitian variety and: 

     --- If $n$ is even, the $d$ hyperplanes $\Sigma_1,\dots, \Sigma_d$ are non-tangent to $\U_n$.

     --- If $n$ is odd, the $d$ hyperplanes $\Sigma_1,\dots, \Sigma_d$ are tangent to $\U_n$. 
\end{conjecture}
Conjecture \ref{ELXc} is known to hold for $d=1$, as established by Bose and Chakravarti in their seminal works  \cite{BC, C}. When $n=3$, the conjecture has been completely settled through a series of papers: The case $d=2$ was proved by Edoukou \cite{E}; followed by the case $d=3$ by Beelen and Datta in \cite{BD} and finally the conjecture for all $d\leq q$ was settled down by Beelen, Datta, and Homma \cite{BDH}.

For $n\geq 4$, initial progress was made by Hallez and Storme, who established the validity of the conjecture for $d=2$ under the additional assumption $n < O(q^2)$ in \cite{HS}. This restriction was later removed by Bartoli, Boeck, Fanali, and Storme, who proved the conjecture for all integers $n\geq 4$ in the case $d=2$ \cite[Theorem 3.3]{BBFS}.

More recently, the case $n=4$ and $d=3$ was established by Datta and the present author in \cite{DM}, under the assumption $q\geq 7$. Furthermore, the conjecture has been proved for $d=3$ and all $n\geq 4$, again under the same hypothesis $q\geq 7$, by the present author in \cite{S1}. For degrees $d\geq 4$, Conjecture~\ref{ELXc} remains open to date. Thus, the preceding discussion leads us to the following: 

\begin{theorem}\label{lit}
Let $n \geq 4$ and let $q$ be a prime power. Then
$$
\begin{aligned}
M_n(1) &=
\begin{cases}
|\U_{n-1}(\FF_{q^2})| 
\ \text{if} \ n \ \text{is even} \\
q^2 |\U_{n-2}(\FF_{q^2})| + 1 
\ \text{if} \ n \ \text{is odd},
\end{cases} \\
M_n(2) &=
\begin{cases}
2|\U_{n-1}(\FF_{q^2})| - |\U_{n-2}(\FF_{q^2})| \ \text{if} \ n \ \text{is even}
 \\
(2q^2 - 1)|\U_{n-2}(\FF_{q^2})| + 2
\ \text{if} \ n \ \text{is odd}
\end{cases} \\ \text{and, for}\ q\geq 7\\
M_n(3) &=
\begin{cases}
3|\U_{n-1}(\FF_{q^2})| - 2|\U_{n-2}(\FF_{q^2})|
\ \text{if} \ n \ \text{is even} \\
(3q^2 - 2)|\U_{n-2}(\FF_{q^2})| + 3
\ \text{if} \ n \ \text{is odd}.
\end{cases}
\end{aligned}
$$ The value of $M_n(d)$ is still unknown for $d \ge 4$ when $n \ge 4$.
\end{theorem}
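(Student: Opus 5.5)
The statement is a compilation of known results, so the plan is to check that each cited result, written in the normalization of Conjecture \ref{ELXc}, yields the displayed formulas. First I would record that Conjecture \ref{ELXc} is now a theorem in three ranges. For $d=1$ it is the classical hyperplane-section theory of Bose and Chakravarti. For $d=2$ and all $n\ge 4$ it is \cite[Theorem 3.3]{BBFS}. For $d=3$, $n\ge 4$ and $q\ge 7$ it is \cite{DM} together with \cite{S1}. Then I would substitute $d=1,2,3$ into the two branches of the conjectured formula.

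For $d=1$ I would argue directly from Theorem \ref{hyperplane section}. A non-tangent hyperplane meets $\U_n$ in $\U_{n-1}$. A tangent hyperplane meets $\U_n$ in a cone $R\U_{n-2}$, which has $1+q^2|\U_{n-2}(\Fqt)|$ rational points. It remains to compare these two counts. Using Theorem \ref{nondeg point}, their difference is
\[
q^2|\U_{n-2}(\Fqt)|+1-|\U_{n-1}(\Fqt)| = (-1)^{n+1}q^{n-1}+\text{(lower-order correction)}.
\]
I would verify this by writing $|\U_m(\Fqt)|=\frac{(q^m-(-1)^m)(q^{m+1}+(-1)^m)}{q^2-1}$ and simplifying. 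The expected value is $\pm q^{n-1}$, positive when $n$ is odd. So the tangent section is larger when $n$ is odd and the non-tangent section is larger when $n$ is even, which gives the stated $M_n(1)$.

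For $d=2,3$ I would substitute into the conjectured formula. In the even case this gives $d|\U_{n-1}|-(d-1)|\U_{n-2}|$ immediately. In the odd case it gives $(dq^2-d+1)|\U_{n-2}|+d$. For $d=2$ this is $(2q^2-1)|\U_{n-2}|+2$, and for $d=3$ it is $(3q^2-2)|\U_{n-2}|+3$, matching the statement. As a consistency check, the same substitution at $d=1$ gives $q^2|\U_{n-2}|+1$, agreeing with the direct argument above.

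The main obstacle is bookkeeping across the sources. The cited papers may state their bounds in different but equivalent forms, for instance counting as $d$ non-tangent hyperplanes through a common $\Pi_{n-2}$ minus the overcounted intersection, or as cones. Any such discrepancy is resolved by an inclusion–exclusion count. In the even case this count is
\[
d\,|\U_{n-1}|-(d-1)|\U_{n-2}|,
\]
since the $d$ hyperplanes pairwise share exactly $\Pi_{n-2}\cap\U_n=\U_{n-2}$. In the odd case, tangent hyperplanes through $\Pi_{n-2}$ give cones $R_i\U_{n-2}$ sharing $\U_{n-2}$, so the count is
\[
d\left(1+q^2|\U_{n-2}|\right)-(d-1)|\U_{n-2}| = (dq^2-d+1)|\U_{n-2}|+d .
\]
I would also state explicitly that for $d\ge 4$ no result is available, so $M_n(d)$ remains unknown there.
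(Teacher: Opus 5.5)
Your proposal is correct and follows the same route as the paper, which obtains this theorem purely by collecting the settled cases of Conjecture~\ref{ELXc} ($d=1$ from Bose--Chakravarti \cite{BC,C}, $d=2$ from \cite[Theorem 3.3]{BBFS}, and $d=3$ with $q\ge 7$ from \cite{DM,S1}) and specializing the conjectured formula. Your direct check for $d=1$ is a harmless extra, and the comparison you sketch is in fact exact: $q^2|\U_{n-2}(\Fqt)|+1-|\U_{n-1}(\Fqt)|=(-1)^{n+1}q^{n-1}$, with no lower-order term.
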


%%%%%%%%%%%%%%%%%%%%%%%%%%%%%%%%%%%%%%%
\section{ Parameters of the functional codes}\label{parameters}
We now determine the parameters of the functional codes appearing from the forms of degree $d\leq q$ and the rank $n$ Hermitian variety. To this end, we first present the definition of functional codes introduced by Lachaud.
\begin{definition}\cite[Definition 5.4]{L}\label{functional code}
\normalfont Let $\Fq[x_0,\dots,x_n]_d$ denote the $\Fq$-vector space of all homogeneous polynomials in $\Fq[x_0,\dots,x_n]$ of degree $d$ together with the zero polynomial. Suppose $\X$ is a projective variety in $\PP^n(\Fq)$ and $\p_1,\p_2,\dots,\p_{|\X(\Fq)|}$ is an ordered list of all $\Fq$-rational points of $\X$, chosen so that the last non-zero coordinate of each point is $1$. Consider the evaluation map
\begin{equation}\label{ev}
\mathrm{ev}:\Fq[x_0,\dots,x_n]_d \to \Fq^{|\X(\Fq)|} \ \ \text{defined by} \ \ F\mapsto c_F:=\left(F\left(\p_1\right),\dots,F\left(\p_{|\X(\Fq)|}\right)\right).
\end{equation}
The map $\mathrm{ev}$ is a linear map and its image $\mathrm{Im}\left(\mathrm{ev}\right)$ is called the \textit{functional code $C_d(\X)$ defined by forms of degree $d$ on $\X$}.
\end{definition} 

\begin{remark}\label{re: parameter}
 \normalfont We observe that the \textit{length} of the code $C_d(\X)$ is $|\X(\Fq)|$ and the \textit{dimension} is  $\binom{n+d}{d}-\dim_{\Fq}\left(\mathrm{ker}\left(\mathrm{ev}\right)\right)$, where $\mathrm{ev}$ is defined in \eqref{ev}. The \textit{minimum distance} of $C_d(\X)$ is given by
 \begin{align*}
     \mathsf{d}(C_d(\X)) &=\min\left\{\wt(c_F): c_F\in C_d(\X),\ c_F\neq 0\right\}\\
                        &=|\X(\Fq)|-\max\left\{|V(F)\cap \X(\Fq)|: c_F\neq 0\right\}.
\end{align*}
 Note that, if the evaluation map $\mathrm{ev}$ is \textit{injective} then $c_F\neq 0$ if and only if $F\neq 0$. In that case, we have 
 $$\mathsf{d}(C_d(\X))=|\X(\Fq)|-\max\left\{|V(F)\cap \X(\Fq)|: F\neq 0\right\}.$$
\end{remark}

%%%%%%%%%%%%%%%%%%%%%%%%%%%%%%%%%%%%
\begin{lemma}\label{inj}
  Let $\X=P\U_{n-1}$ be the degenerate Hermitian variety of rank $n$ in $\PP^n(\Fqt)$ with $n\geq 3$. If $d\leq q$, then the evaluation map 
\begin{align*}
  \ev: \Fqt[x_0,\dots,x_n]_d &\to \Fqt^{|P\U_{n-1}(\Fqt)|} \\
  F&\mapsto c_F:=\left(F\left(\p_1\right),\dots,F\left(\p_{|P\U_{n-1}(\Fqt)|}\right)\right)
\end{align*}
  is injective.
\end{lemma}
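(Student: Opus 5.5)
Since $\ev$ is linear, it suffices to show that its kernel is trivial. Suppose $F\in \Fqt[x_0,\dots,x_n]_d$ is a nonzero form with $c_F=0$, i.e. $P\U_{n-1}(\Fqt)\subseteq V(F)$. We aim to contradict this by a point count, using the bounds of Section \ref{degenerate of rank n}. Note that Theorem \ref{rank n} requires knowledge of $M_{n-1}(d)$. A cleaner route is therefore to restrict to a hyperplane $\Sigma$ with $P\notin\Sigma$. By Corollary \ref{hyp cup}, $\Sigma\cap P\U_{n-1}$ is a non-degenerate Hermitian variety $\U_{n-1}\subset\Sigma\cong\PP^{n-1}$.

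\textbf{Step 1 (line argument).} Every $\Fqt$-line $PQ$ with $Q\in\U_{n-1}(\Fqt)$ lies in $P\U_{n-1}$ and has $q^2+1>d$ rational points. If all of them lie in $V(F)$, then $F$ restricted to the line vanishes identically, since a nonzero binary form of degree $d\le q$ has at most $d$ projective zeros. Consequently $V(F)$ contains the full cone $P\U_{n-1}$ over $\overline{\FF}_q$.

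\textbf{Step 2 (Hermitian polynomial divides $F$).} For $n\ge 3$, the cone $P\U_{n-1}=V(H)$ with $H=x_0^{q+1}+\dots+x_{n-1}^{q+1}$ is an absolutely irreducible hypersurface of degree $q+1$, as noted after \eqref{reduceherm}. From $V(H)\subseteq V(F)$, Hilbert's Nullstellensatz together with the irreducibility of $H$ gives $H\mid F$. This forces $\deg F\ge q+1>d$, which contradicts $d\le q$ unless $F=0$.

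\textbf{Alternative to Step 2.} If one prefers to avoid the Nullstellensatz, one can argue via Proposition \ref{coprime} instead. Since $H$ is irreducible of degree $q+1>d$, the polynomials $F$ and $H$ share no common factor. Hence $V(F)\cap V(H)$ has dimension $n-2$, whereas by Step 1 it equals $V(H)$, which has dimension $n-1$. This is a contradiction.

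The main obstacle is the passage from the $\Fqt$-points to the geometric variety, that is, justifying that vanishing on rational points forces vanishing on all of $V(H)$. Step 1 handles this one line at a time; this needs the fact that the $\Fqt$-points of $\U_{n-1}$ are Zariski dense in $\U_{n-1}$. I would instead argue directly that $F$ vanishes on every $\overline{\FF}_q$-point of $P\U_{n-1}$ through an inductive restriction to planes. A simpler route is available, though. Pick a hyperplane $\Sigma$ with $P\notin\Sigma$ and $\Sigma\not\subseteq V(F)$, which exists by Lemma \ref{line}-type counting. Then $F|_\Sigma$ is a nonzero form of degree $d\le q$ vanishing on $\U_{n-1}(\Fqt)$. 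Applying Lemma \ref{missing P} or Theorem \ref{serre}-type counts to this restriction gives $|\U_{n-1}(\Fqt)|\le dq^{2(n-2)}+\pi_{n-3}(\Fqt)$, which is smaller than the formula in Theorem \ref{nondeg point} for $d\le q$. I would present this Serre-inequality comparison as the primary proof and check the inequality $q^{2n-3}+\dots > dq^{2n-4}+\pi_{n-3}$ as the routine final step.
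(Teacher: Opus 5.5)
\textbf{Gap in the primary argument.} Your Serre comparison on a hyperplane does not close in the case $n=3$, $d=q$, which the statement covers. Restricting to $\Sigma\not\ni P$ is the paper's Case 1 rescaled. Indeed $|P\U_{n-1}(\Fqt)|=1+q^2|\U_{n-1}(\Fqt)|$ and $dq^{2n-2}+\pi_{n-2}(\Fqt)=1+q^2\bigl(dq^{2n-4}+\pi_{n-3}(\Fqt)\bigr)$, so your worst-case margin (at $d=q$) is the paper's divided by $q^2$:
\[
|\U_{n-1}(\Fqt)|-\bigl(q^{2n-3}+\pi_{n-3}(\Fqt)\bigr)=\frac{q^{n-1}\left(q^{n-3}+(-1)^n\right)}{q+1}.
\]
This is positive for $n\ge 4$ but equals $0$ for $n=3$. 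There Serre's bound allows a nonzero degree-$q$ form on $\Sigma\cong\PP^2$ to have $q^3+1=|\U_2(\Fqt)|$ rational zeros, so no contradiction follows. The inequality you call routine is false in that case.

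This is why the paper treats $n=3$ separately, via Theorem \ref{surface and rank n}, whose bound $1+q^2d(q+1)$ rests on B\'ezout for the absolutely irreducible curve $\U_2$. You can repair your version the same way. Since $\U_2$ is irreducible of degree $q+1>d$, it shares no component with $V(F|_\Sigma)$, so $|V(F|_\Sigma)\cap\U_2(\Fqt)|\le d(q+1)<q^3+1$. Alternatively, use the equality case of Theorem \ref{serre}: the $q$ concurrent $\Fqt$-lines it would force cannot have all their rational points on $\U_2$, which contains no line. Separately, the existence of $\Sigma$ with $P\notin\Sigma\not\subseteq V(F)$ needs no Lemma \ref{line}. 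It holds because $V(F)$ contains at most $d<q^{2n}$ hyperplanes.

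\textbf{Steps 1--2.} These should be discarded rather than kept as an alternative. Vanishing of $F$ on the lines $PQ$ with $Q\in\U_{n-1}(\Fqt)$ only places finitely many lines of the cone inside $V(F)$, not the whole cone over $\overline{\FF}_q$. The density fact you invoke to bridge this is false, since $\U_{n-1}(\Fqt)$ is finite and hence Zariski closed. For example, $x_0^{q^2}x_1-x_0x_1^{q^2}$ vanishes at every $\Fqt$-point but not identically on $P\U_{n-1}$. Deducing vanishing on the whole cone from vanishing at rational points, for $d\le q$, is equivalent to the lemma itself, so that route is circular. Only the point count carries the proof.
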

\begin{proof}
    The proof is distinguished into three cases.

    \textbf{Case 1:} $n\geq 4$.
It follows from Serre's inequality \ref{serre} that if $F\in\Fqt[x_0,\dots,x_n]$ is a non-zero homogeneous polynomial of degree $d$, then 
\begin{align*}
    |V(F)(\Fqt)|&\leq dq^{2n-2}+\pi_{n-2}(\Fqt)\\
                &\leq q^{2n-1}+\pi_{n-2}(\Fqt),        
\end{align*}
where the last inequality follows since $d\leq q$. Now, since $\pi_{n-2}(\Fqt)=(q^{2n-2}-1)/(q^2-1)$ a straightforward computation yields 
\begin{align*}
    |P\U_{n-1}(\Fqt)|- \left( q^{2n-1}+\pi_{n-2}(\Fqt)\right)&= \frac{q^{n+1}\left(q^{n-3}+(-1)^n\right)}{q+1}\\
                & > 0,
\end{align*}
where the last inequality follows since $n\geq 4$.

\textbf{Case 2:} $n=3$. It follows from Theorem \ref{surface and rank n} that for a non-zero homogeneous polynomial $F$ of degree $d\leq q$, we have
\begin{align*}
  |V(F)\cap P\U_2(\Fqt)|&\leq 1+q^2d(q+1)\\
                        &\leq q^4+q^3+1 \ \ (\text{as}\ d\leq q)\\
                        &< |P\U_2(\Fqt)|=q^5+q^2+1.
\end{align*}

\textbf{Case 3:} $n=2$. In the same vein as Case 2, it follows from Theorem \ref{2-main-theorem} that $|V(F)\cap P\U_1(\Fqt)|\leq 1+dq^2<|P\U_1|=q^3+q^2+1$.

Combining the cases, we conclude that a homogeneous polynomial $F\in \Fqt[x_0,\dots,x_n]$ of degree $d\leq q$ never vanishes on $P\U_{n-1}$ for $d\leq q$, unless it is identically zero. 
\end{proof}

\begin{theorem}\label{th:parameters}
    The code $C_d(P\U_{n-1})$ defined by forms of degree $d\leq q$ on $P\U_{n-1}$ is an $[m,k,\mathsf{d}]_{q^2}$ code where 
    \begin{align*}
        &m=|P\U_{n-1}(\Fqt)|=1+q^2\frac{(q^{n-1}-(-1)^{n-1})(q^n-(-1)^n)}{q^2-1}\\
        &k=\binom{n+d}{d}\\
        & \mathsf{d}\geq |P\U_{n-1}|- \max \left\{|\U_{n-1}(\Fqt)|+(d-1)(q+1)q^{2n-4}, \ 1+q^2 M_{n-1}(d) \right\}. 
    \end{align*}
\end{theorem}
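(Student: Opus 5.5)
The plan is to read off the three parameters directly from Remark \ref{re: parameter}, using the injectivity of the evaluation map from Lemma \ref{inj} and the bound of Theorem \ref{rank n}.

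\emph{Length.} By Remark \ref{re: parameter}, the length is $m=|P\U_{n-1}(\Fqt)|$. We already observed that $|P\U_{n-1}(\Fqt)|=1+q^2|\U_{n-1}(\Fqt)|$, since the cone is the union of the $|\U_{n-1}(\Fqt)|$ lines through the vertex $P$, and these lines meet only at $P$. Substituting Theorem \ref{nondeg point} with $n-1$ in place of $n$ gives exactly the stated formula for $m$.

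\emph{Dimension.} By Remark \ref{re: parameter}, $k=\binom{n+d}{d}-\dim\ker(\ev)$. Lemma \ref{inj} shows that $\ev$ is injective whenever $d\le q$, so $k=\dim_{\Fqt}\Fqt[x_0,\dots,x_n]_d=\binom{n+d}{d}$. Lemma \ref{inj} is stated for $n\ge 3$, but its proof also handles the case $n=2$ (Case 3), so the claim holds whenever the cone $P\U_{n-1}$ is defined.

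\emph{Minimum distance.} Because $\ev$ is injective, Remark \ref{re: parameter} gives
\[
\mathsf{d}=|P\U_{n-1}(\Fqt)|-\max_{F\neq 0}|V(F)\cap P\U_{n-1}(\Fqt)|.
\]
For $n\ge 3$, Theorem \ref{rank n} bounds this maximum by $\max\{|\U_{n-1}(\Fqt)|+(d-1)(q+1)q^{2n-4},\ 1+q^2M_{n-1}(d)\}$, which yields the stated inequality.

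There is one subtle point. Theorem \ref{rank n} is phrased for hypersurfaces $V(F)$ of degree $d$, while the codewords here come from arbitrary nonzero forms of degree $d$. This causes no problem, because $V(F)$ is defined for any nonzero form $F$ and the proof of Theorem \ref{rank n} never needs $F$ to be reduced or irreducible.

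The only genuine obstacle is the case $n=2$, where Theorem \ref{rank n} is unavailable. There I would argue directly. The base $\U_1$ consists of $q+1$ points on a line, and any form of degree $d\le q$ in two variables vanishes at no more than $d$ of them, so $M_1(d)=d$. Hence $1+q^2M_1(d)=dq^2+1$, and this matches the bound of Theorem \ref{2-main-theorem}. The remaining term $|\U_1(\Fqt)|+(d-1)(q+1)q^{0}$ only enlarges the maximum, so the inequality still holds for $n=2$.
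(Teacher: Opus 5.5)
Your proposal is correct and follows the paper's proof. The paper reads the length and dimension off Remark~\ref{re: parameter} and gets the distance bound from Theorem~\ref{rank n} together with the injectivity in Lemma~\ref{inj}; you rightly note that the dimension also needs Lemma~\ref{inj}. Your separate treatment of $n=2$ is a sensible patch, since Theorem~\ref{rank n} is stated only for $n\ge 3$. There, however, the direction you actually need is $M_1(d)\ge d$. That comes from a product of $d$ linear forms vanishing at $d$ of the $q+1$ points of $\U_1$, not from the upper bound you cite.
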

\begin{proof}
  The length and dimension follow from remark \ref{re: parameter}, whereas the minimum distance follows from Theorem \ref{rank n} and Lemma \ref{inj}.  
\end{proof}

We now present the parameters of the code $C_d(P\U_{n-1})$ in the case when $n=2,3,4$.
\begin{theorem}\label{codes PU}
Let $d\le q$. The code $C_d(P\U_{n-1})$ defined by forms of degree $d$ on $P\U_{n-1}$ is an $[m,k,\mathsf{d}]_{q^2}$-code, where if $n=2$, then
\begin{align*}
m &= |P\U_{1}(\Fqt)| = q^3 + q^2 + 1\\
k &= \binom{2+d}{d}\\
\mathsf{d} &= q^3-(d-1)q^2,
\end{align*}

 if $n=3$, then
\begin{align*}
m &= |P\U_{2}(\Fqt)| = q^5 + q^2 + 1\\
k &= \binom{3+d}{d}\\
\mathsf{d} &= q^2\left(q^3 - dq - (d-1)\right),
\end{align*}

and if $n=4$, then
\begin{align*}
m &= |P\U_{3}(\Fqt)| = q^7 + q^5 + q^4 + q^2 + 1\\
k &= \binom{4+d}{d}\\
\mathsf{d} &= q^7 - (d-1)q^3(q^2+q-1).
\end{align*}
\end{theorem}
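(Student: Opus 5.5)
The plan is to read off each parameter from results already established, using Remark \ref{re: parameter} throughout. For the \emph{length}, I compute $m=|P\U_{n-1}(\Fqt)|=1+q^2|\U_{n-1}(\Fqt)|$ with Theorem \ref{nondeg point}:
\begin{itemize}
\item $|\U_1(\Fqt)|=q+1$ gives $m=q^3+q^2+1$ for $n=2$;
\item $|\U_2(\Fqt)|=q^3+1$ gives $m=q^5+q^2+1$ for $n=3$;
\item $|\U_3(\Fqt)|=(q^3+1)(q^2+1)=q^5+q^3+q^2+1$ gives $m=q^7+q^5+q^4+q^2+1$ for $n=4$.
\end{itemize}

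For the \emph{dimension}, I use that the evaluation map is injective for $d\le q$, so $k=\binom{n+d}{d}$. For $n=3,4$ this is Lemma \ref{inj} directly. Although Lemma \ref{inj} is stated for $n\ge 3$, its Case 3 handles $n=2$ by the same argument, via Theorem \ref{2-main-theorem} and $1+dq^2<q^3+q^2+1$. Injectivity also lets me use the second formula in Remark \ref{re: parameter}:
\[
\mathsf{d}=m-\max_{F\neq 0}|V(F)\cap P\U_{n-1}(\Fqt)|.
\]
So the whole task reduces to knowing this maximum \emph{exactly}. That requires an upper bound together with a hypersurface of degree $d$ attaining it.

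\textbf{Case $n=2$.} The upper bound $dq^2+1$ is Theorem \ref{2-main-theorem}. It is attained by taking $d\le q<q+1$ of the $q+1$ lines of $P\U_1$ through $P$. Hence $\mathsf{d}=q^3+q^2+1-(dq^2+1)=q^3-(d-1)q^2$.

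\textbf{Case $n=3$.} The upper bound $1+q^2d(q+1)$ is Theorem \ref{surface and rank n}. For attainment, choose a plane $\Pi_0\not\ni P$, so that $P\U_2\cap\Pi_0=\U_2$ by Corollary \ref{hyp cup}. Inside $\Pi_0$, take $d$ concurrent secant lines through a point off $\U_2$, as in the proof of Theorem \ref{surface and rank n}. The cone over them with vertex $P$ has degree $d$. It contains exactly $d(q+1)$ lines of $P\U_2$ through $P$, and meets no other points of $P\U_2$ except $P$, giving $1+q^2d(q+1)$ common points (cf.\ \cite[Remark 3.4]{DM} and Theorem \ref{attain con line}). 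Subtracting gives $\mathsf{d}=q^5+q^2-dq^3-dq^2=q^2(q^3-dq-(d-1))$.

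\textbf{Case $n=4$.} The upper bound $1+q^2M_3(d)$, with $M_3(d)=d(q^3+q^2-q)+q+1$, is Theorem \ref{degenerate and threefold}. Attainment is the cone construction of Remark \ref{when attained}, using the extremal surfaces of Theorem \ref{SoB} inside a hyperplane not through $P$. Subtracting gives
\[
\mathsf{d}=q^7+q^5+q^4+q^2-dq^5-dq^4+dq^3-q^3-q^2=q^7-(d-1)q^3(q^2+q-1).
\]

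The only step needing care is exactness of the maximum, i.e.\ checking that each cone construction really meets the rank-$n$ Hermitian variety in the claimed number of points. This holds because a line through $P$ lying in both the cone $PS$ and $P\U_{n-1}$ corresponds exactly to a point of $S\cap\U_{n-1}$ in the base hyperplane, so the intersection count is $1+q^2|S\cap\U_{n-1}(\Fqt)|$. The rest is routine arithmetic.
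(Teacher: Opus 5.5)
Your proposal is correct and follows essentially the same route as the paper. Length comes from $1+q^2|\U_{n-1}(\Fqt)|$, dimension from injectivity of the evaluation map (Lemma \ref{inj}, whose Case 3 covers $n=2$ as you note). The minimum distance comes from the upper bounds of Theorems \ref{2-main-theorem}, \ref{surface and rank n} and \ref{degenerate and threefold}, shown to be attained by cones with vertex $P$ over extremal configurations in a hyperplane not through $P$. Your explicit check that such a cone $PS$ meets $P\U_{n-1}$ in exactly $1+q^2|S\cap\U_{n-1}(\Fqt)|$ points fills in what the paper leaves implicit.
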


\begin{proof} 
The case $n=2$ follows from Theorem \ref{2-main-theorem} and Theorem \ref{th:parameters}.
The case $n=3$ follows from Theorem \ref{th:parameters} together with Theorem \ref{surface and rank n} and the fact that there exists an $\Fqt$-surface sharing exactly $1+q^2d(q+1)$ many $\Fqt$-rational points with $P\U_2$. 
The case $n=4$ follows from Theorem \ref{th:parameters} together with Theorem \ref{degenerate and threefold} and Remark \ref{when attained}.
\end{proof}

\section{Acknowledgment}
The author would like to thank Dr. Mrinmoy Datta for his encouragement and some helpful suggestions during this work.

%%%%%%%%%%%%%%%%%%%%%%%%%%%%%%%%%%%%%%%%%%%%%%%%%%%%%%%%%%%%%%%%%%%%%%%%%%%%%%%%%%%%%%%%%%%%%%%%%%%%%%%%%%%%%%%%%%%%%%%%%%%%%%%%%%%%%%%%%%%%%%%%%%%%%%

\end{document}